\theoremstyle{plain}
  \newtheorem{theorem}{Theorem}
  \newtheorem{corollary}[theorem]{Corollary}
  \newtheorem{thm}{Theorem}[section]
  \newtheorem{conj}[theorem]{Conjecture}
  \newtheorem{prop}[thm]{Proposition}
  \newtheorem{cor}[thm]{Corollary}
  \newtheorem{lemma}[thm]{Lemma}
\theoremstyle{definition}
  \newtheorem{defn}[thm]{Definition}
  \newtheorem{example}[thm]{Example}
  \newtheorem{remark}[thm]{Remark}
\theoremstyle{remark}
\DeclareMathOperator{\Cl}{Cl}
\DeclareMathOperator{\Frac}{Frac}
\DeclareMathOperator{\Hom}{Hom}
\DeclareMathOperator{\Aut}{Aut}
\DeclareMathOperator{\Sym}{Sym}
\DeclareMathOperator{\Norm}{N}
\DeclareMathOperator{\Stab}{Stab}
\def\Z{{\mathbb Z}}
\def\T{{\mathcal T}}
\def\W{{\mathcal W}}
\def\bZ{{\mathbb Z}}
\def\irr{{\rm irr}}
\def\nostab{{\rm nostab}}
\def\GL{{\rm GL}}
\def\SL{{\rm SL}}
\def\Cl{{\rm Cl}}
\def\Sym{{\rm Sym}}
\def\Det{{\rm Det}}
\def\Aut{{\rm Aut}}
\def\Stab{{\rm Stab}}
\def\bad{{\rm bad}}
\def\Avg{{\rm Avg}}
\def\P{{\mathbb P}}
\def\Disc{{\rm Disc}}
\def\proj{{\rm proj}}
\def\red{{\rm red}}
\def\Vol{{\rm Vol}}
\def\prim{{\rm prim}}
\def\R{{\mathbb R}}
\def\C{{\mathbb C}}
\def\F{{\mathbb F}}
\def\bR{{\mathbb R}}
\def\FF{{\mathcal F}}
\def\RR{{\mathcal R}}
\def\Q{{\mathbb Q}}
\def\H{{\mathcal H}}
\def\U{{\mathcal U}}
\def\V{{\mathcal V}}
\def\Z{{\mathbb Z}}
\def\P{{\mathbb P}}
\def\F{{\mathbb F}}
\def\Q{{\mathbb Q}}
\def\cI{{\mathcal I}}
\def\bQ{{\mathbb Q}}
\def\cC{{\mathcal C}}
\def\bZ{{\mathbb Z}}
\def\bP{{\mathbb P}}
\def\bQ{{\mathbb Q}}
\def\cO{{\mathcal O}}
\def\cI{{\mathcal I}}
\def\dh{\;\mathrm{d}h}
\def\dd{\;\mathrm{d}}
\def\sgn{{\mathrm{sgn}}}
\def\fR{{\mathfrak R}}
\def\If{I_f}
\def\II{{I}}
\def\tt{{\tau}}
\def\rmH{\mathrm{H}}
\newcommand{\var}{\mathrm{Var}}
\newcommand{\SO}{\mathrm{SO}}
\newcommand{\PP}{\mathbb{P}}
\newcommand{\Zp}{\mathbb{Z}_p}
\newcommand{\Fp}{\mathbb{F}_p}
\newcommand{\Si}{\mathfrak{S}}
\newcommand{\J}{\mathcal{J}}
\title{Odd degree number fields with odd class number}
\author{Wei Ho, Arul Shankar, and Ila Varma}
\date{}
\begin{document}

\maketitle

\begin{abstract}
For every odd integer $n \geq 3$, we prove that there exist infinitely
many number fields of degree $n$ and associated Galois group $S_n$
whose class number is odd. To do so, we study the class groups of
families of number fields of degree $n$ whose rings of integers arise
as the coordinate rings of the subschemes of $\bP^1$ cut out by
integral binary $n$-ic forms. By obtaining upper bounds on the mean
number of $2$-torsion elements in the class groups of fields in these
families, we prove that a positive proportion (tending to $1$ as $n$
tends to $\infty$) of such fields have trivial $2$-torsion subgroup in
their class groups and narrow class groups. Conditional on a tail
estimate, we also prove the corresponding lower bounds and obtain the
exact values of these averages, which are consistent with the
heuristics of Cohen--Lenstra--Martinet--Malle and Dummit--Voight.

Additionally, for any order $\cO_f$ of degree $n$ arising from an
integral binary $n$-ic form $f$, we compare the sizes of
$\Cl_2(\cO_f)$, the $2$-torsion subgroup of ideal classes in $\cO_f$,
and $\cI_2(\cO_f)$, the $2$-torsion subgroup of ideals in $\cO_f$. For
the family of orders arising from integral binary $n$-ic forms and
contained in fields with fixed signature $(r_1,r_2)$, we prove that
the mean value of the difference $|\Cl_2(\cO_f)| -
{2^{1-r_1-r_2}}|\cI_2(\cO_f)|$ is equal to $1$, generalizing a result of
Bhargava and the third-named author for cubic fields.  Conditional on
certain tail estimates, we also prove that the mean value of
$|\Cl_2(\cO_f)| - {2^{1-r_1-r_2}}|\cI_2(\cO_f)|$ remains $1$ for
certain families obtained by imposing local splitting and maximality
conditions.
\end{abstract}

\section{Introduction}
\enlargethispage{\baselineskip}

The Cohen-Lenstra heuristics \cite{cohenlenstra} give precise
predictions for the distribution of ideal class groups in families of
quadratic fields. Very few cases of these conjectures have been
proved; among them are the celebrated results of Davenport--Heilbronn
\cite{davenportheilbronn2} on the average number of $3$-torsion
elements in the class groups of quadratic fields,
and of Fouvry--Kluners \cite{FK} on the $4$-ranks of the class
groups of quadratic fields. These heuristics were generalized by
Cohen--Martinet \cite{cohenmartinet} to describe the distribution of
ideal class groups in families of number fields of
fixed degree over a fixed base field. In 2010, Malle \cite{clnum2}
proposed a modification
of Cohen--Martinet's heuristics to account for observed variations in
the asymptotic behavior of the $p$-part of the class groups of
families over a base field containing the $p$th roots of unity; for
example, for $p = 2$ and odd $n$, the modified heuristics yield the following
predictions on the mean number of $2$-torsion ideal classes in degree $n$ $S_n$-number fields over $\bQ$ with signature $(r_1,r_2)$, i.e., number fields with $r_1$ real embeddings and $r_2$ pairs of conjugate complex embeddings, and whose normal closure over $\bQ$ has Galois group $S_n$.

\begin{conj}[Cohen-Lenstra-Martinet-Malle]  \label{conj:malle}
Fix an odd integer $n \geq 3$ and a pair of nonnegative integers
$(r_1,r_2)$ such that $r_1 + 2r_2 = n$. Consider the set of
isomorphism classes of degree $n$ $S_n$-number fields with signature $(r_1,r_2)$. The average number
of $2$-torsion elements in the ideal class groups of such fields is
	\begin{equation} \label{eq:avgvalue}
	1 + 2^{1-r_1 - r_2}
	\end{equation}
when these fields are ordered by discriminant.
\end{conj}

The only proven cases of the above conjecture are when $n = 3$, due to
Bhargava \cite{manjulcountquartic}. In this paper, we provide evidence
toward {\em all} cases of Conjecture \ref{conj:malle} by computing
the average size of the $2$-torsion subgroups of ideal class groups of
certain infinite families of number fields of fixed odd degree $n$;
even though we do not average over the family of all number fields of
a given signature ordered by discriminant, the mean values coincide
with \eqref{eq:avgvalue}, conditional on a certain tail
estimate. Unconditionally, we prove that an infinite number of odd degree $n$ $S_n$-fields with signature $(r_1,r_2)$ have odd class number. We also compute the
average size of the $2$-torsion subgroup of the {\em narrow} class
groups of the same infinite families, which allows us to give
analogues of the Cohen-Lenstra-Martinet-Malle heuristics predicting
the asymptotic behavior of the narrow class groups in families of
number fields of fixed odd degree and signature.

\medskip

In order to state our results more precisely, we first describe the
families of number fields we study, which arise from families of integral binary
$n$-ic forms.  Given an integer $n \geq 3$, to a nonzero integral binary
$n$-ic form $f \in \Sym_n(\Z^2)$, we may naturally associate the
coordinate ring $R_f$ of the subscheme of ${\bP}^1_\Z$ cut out by $f$
(see Nakagawa \cite{nakagawa} and Wood \cite{rings}).  Define the
family $\mathfrak{R}_H$ to be the multiset of rings
	$$\mathfrak{R}_H = \{R_f \mid f \in \Sym_n(\bZ^2)\}.$$
There is a {\em height} ordering on $\mathfrak{R}_H$ arising from
the height ordering $H$ on $\Sym_n(\Z^2)$, where $H(f)$ is
defined as the maximum absolute value of the coefficients of $f$. Note that 
although two rings in $\fR_H$ may be
isomorphic, their heights need not be equal. For example, if $\gamma
\in \SL_2(\bZ)$, and we define the action $\gamma f(x,y) :=
f((x,y)\gamma)$ on the space of integral binary $n$-ic forms, then it
is always true that $R_f \cong R_{\gamma f}$, but it is not in general
true that $H(f) = H(\gamma f)$. Nevertheless, there is a well-defined
isomorphism class of rings $R_{[f]}$ associated to an
$\SL_2(\bZ)$-orbit $[f] \in \SL_2(\bZ) \backslash \Sym_n(\bZ^2)$ since
$R_{[f]}$ is isomorphic to $R_g$ if and only if $g = \gamma f$ for any $\gamma \in
\SL_2(\bZ)$. Such orbits $[f]$ may be ordered by their {\em Julia
  invariant}, which is an invariant defined in \cite{julia} for the action
of $\SL_2(\Z)$ on $\Sym_n(\Z^2)$ (see \S \ref{juliadef} for details).
Thus, we also define the family $\fR_J$ to be the multiset of rings
	$$\fR_J = \{R_{[f]} \mid [f] \in \SL_2(\bZ) \backslash
\Sym_n(\bZ^2)\},$$ ordered by Julia invariant $J$, where $J(R_{[f]})
:= J([f])$.
Asymptotics
on the size of $\fR_J$ were obtained by Bhargava--Yang
\cite{BY-Julia}.

In this paper, we compute averages taken over certain families
contained in $\fR_H$ or $\fR_J$. Let $\fR_H^{r_1,r_2} \subset \fR_H$
and $\fR_J^{r_1,r_2} \subset \fR_J$ be the respective subfamilies
consisting of all Gorenstein\footnote{From \cite[Prop.~2.1 and
    Cor.~2.3]{rings} it follows that the ring $R_f$ is Gorenstein if
  and only if $f$ is {\em primitive}, i.e., the coefficients of $f$ do
  not share any common prime factors.} integral domains whose fraction
field has signature $(r_1,r_2)$, i.e., has $r_1$ real embeddings and
$r_2$ pairs of conjugate complex embeddings. Also, let
$\fR^{r_1,r_2}_{H,\max} \subset \fR^{r_1,r_2}_H$
(resp. $\fR^{r_1,r_2}_{J,\max} \subset \fR^{r_1,r_2}_{J}$) be the
subfamily containing all maximal orders. It is worthwhile to note that
a given order $\cO$ in a number field with signature $(r_1,r_2)$ may
occur in $\fR^{r_1,r_2}_H$ or $\fR^{r_1,r_2}_{H,\max}$ an infinite
number of times (up to isomorphism) but only occurs with finite
multiplicity in $\fR^{r_1,r_2}_J$ or $\fR^{r_1,r_2}_{J,\max}$ by a
result of Birch--Merriman \cite{birchmerriman}.

For any subfamilies $\Sigma_H \subseteq \fR_H^{r_1,r_2}$ and $\Sigma_J \subseteq
\fR_J^{r_1,r_2}$, we denote the average number of $2$-torsion elements
of ideal class groups over $\Sigma_H$ ordered by height and over $\Sigma_J$
ordered by Julia invariant as follows:
	\begin{equation} \label{eq:defavg}
	\Avg_H(\Sigma_H,\Cl_2) = \lim_{X\rightarrow \infty} \frac{\displaystyle\sum_{\substack{R_f \in \Sigma_H \\ |H(f)| < X}} |\Cl_2(R_f)|}{\displaystyle\sum_{\substack{R_f \in \Sigma_H \\ |H(f)| < X}} 1} \mbox{\; and \;}\Avg_J(\Sigma_J,\Cl_2) = \lim_{X\rightarrow \infty}  \frac{\displaystyle\sum_{\substack{R_{[f]} \in \Sigma_J \\ |J(f)| < X}} |\Cl_2(R_f)|}{\displaystyle\sum_{\substack{R_{[f]} \in \Sigma_J \\ |J(f)| < X}} 1}
	\end{equation}
where $\Cl_2(R_f)$ denotes the $2$-torsion subgroup of the ideal class group
of $R_f$.  Additionally, we can replace $\Cl_2(R_f)$ with the $2$-torsion subgroup
$\Cl_2^+(R_f)$ of the {\em narrow} class group of $R_f$ in the right hand sides
of the equalities in \eqref{eq:defavg}; we denote these means by $\Avg_H(\Sigma_H,\Cl_2^+)$ and
$\Avg_J(\Sigma_J,\Cl_2^+)$, respectively. The notation
$\Avg_{\ast}(\ast,\ast) \leq c$ will be used to indicate that
the limsups of fractions as in \eqref{eq:defavg}
are bounded by $c$. We then have the following theorem:
\begin{theorem}\label{thm:avgforfields}
Fix an odd integer $n > 3$ and a corresponding signature $(r_1,r_2)$. Then:
\begin{itemize}\label{eqavgstatementth}
	\item[\rm (a)] $\Avg_H(\fR_{H,\max}^{r_1,r_2},\Cl_2) \leq 1+2^{1-r_1-r_2}$ and $\Avg_J(\fR_{J,\max}^{r_1,r_2},\Cl_2) \leq 1+2^{1-r_1-r_2}$, and
	\item[\rm (b)] $\Avg_H(\fR_{H,\max}^{r_1,r_2},\Cl_2^+)\leq 1+{2^{-r_2}}$ and $\Avg_J(\fR_{J,\max}^{r_1,r_2},\Cl_2^+) \leq 1+{2^{-r_2}}$.
\end{itemize}
If the tail estimates in \eqref{eqassumedte} hold, then both $({\rm
  a})$ and $({\rm b})$ are equalities. Additionally, the same upper
bounds $($and conditional equalities$)$ hold when further imposing any finite
set of local conditions on the fields in $\fR_{H,\max}^{r_1,r_2}$ and
$\fR_{J,\max}^{r_1,r_2}$.
\end{theorem}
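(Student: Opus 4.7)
The plan is to follow the strategy of Bhargava--Varma \cite{BV3}, replacing their $n=3$ parametrization by Wood's parametrization \cite{melanie-2nn} for general odd $n$. Concretely, non-identity $2$-torsion elements of $\Cl(R_f)$ (for $f$ primitive and $R_f$ an integral domain) should correspond to $\SL_n(\Z)$-orbits on the lattice $V(\Z):=\Sym_2\Z^n\oplus\Sym_2\Z^n$ of pairs of integral $n$-ary quadratic forms whose ``resolvent'' binary $n$-ic form equals $f$, minus the finitely many orbits coming from the identity class. Thus the sum $\sum_{H(f)<X}|\Cl_2(R_f)|$ splits as a count of $\SL_n(\Z)$-orbits on $V(\Z)$ with bounded resolvent height, plus a term corresponding to the identity class that should average to exactly $1$; dividing by the asymptotic count of primitive $f$ with $H(f)<X$ will yield the average. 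The first step is therefore to lay out Wood's parametrization carefully, identify which $\SL_n(\R)$-orbits above a given real $f$ correspond to genuine non-identity $2$-torsion classes, and compute the number of such real orbits per signature $(r_1,r_2)$; for $\Cl_2$ this number will produce the factor $2^{1-r_1-r_2}$, and for $\Cl_2^+$ the factor $2^{-r_2}$.

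Next I would count integral $\SL_n(\Z)$-orbits on $V(\Z)$ via Bhargava's averaging technique: for each real component $V(\R)^{(i)}$, choose a fundamental set for the $\SL_n(\R)$-action on generic orbits, average the corresponding multiplier $\#\{V(\Z)\cap \mathcal{F}\cdot g B\}$ over a fundamental domain $\mathcal{F}$ for $\SL_n(\Z)\backslash\SL_n(\R)$, and evaluate the resulting integral by unfolding to a volume computation over the resolvent-bounded region. The main local volume at the archimedean place will match the Jacobian computation in \cite{BV3} (generalized), and the non-archimedean local volumes will be supplied by Wood's local orbit counts together with the class-number formula for maximal orders, thereby producing exactly the heuristic factor $2^{1-r_1-r_2}$ (resp.\ $2^{-r_2}$) after summing over real orbit types. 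For the ordering by Julia invariant, the same argument applies after averaging over a fundamental domain for $\SL_2(\Z)$ acting on binary $n$-ic forms and using the Bhargava--Yang \cite{BY-Julia} asymptotics for the denominator.

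The main obstacle is the cusp analysis on $\SL_n(\Z)\backslash V(\Z)$: a positive proportion of lattice points in the fundamental domain lies in the cusp and consists of reducible pairs $(A,B)$, and one must show that all such points with nonzero resolvent are either (i) ``distinguished'' orbits corresponding to the identity class in $\Cl_2$, whose count averages to exactly $1$, or (ii) negligible in the limsup. Following the template in \cite{BV3}, I would stratify the cusp by the vanishing pattern of coordinates of $(A,B)$ in a Borel-adapted basis, then bound each stratum by an explicit lattice-point count weighted by the Iwasawa coordinates on $\mathcal{F}$; the combinatorial bookkeeping grows substantially with $n$ and requires showing that all strata except the distinguished one contribute $o(X^{\dim})$ to the orbit count. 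The tail estimates \eqref{eqassumedte} are precisely the statement that certain boundary strata vanish in the limit, and they are what promotes the upper bounds to equalities in the conditional part of the theorem.

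Finally, to replace $\fR^{r_1,r_2}_H$ by $\fR^{r_1,r_2}_{H,\max}$ and to impose finitely many local splitting conditions, I would apply a uniform Ekedahl-type sieve: at each prime $p$, Wood's local mass formula combined with the power-saving error in the orbit count (with $p$-adic congruence conditions) yields a uniformity estimate of the form $O(p^{-2})$ for the density of non-maximal $f$ at $p$, which is summable. The local factors introduced by the sieve exactly match those predicted by the Cohen--Lenstra--Martinet--Malle heuristics, so they cancel between numerator and denominator and the global averages $1+2^{1-r_1-r_2}$ and $1+2^{-r_2}$ persist under any finite set of local conditions. The narrow class group case (b) is handled identically, using Wood's analogous parametrization of narrow $2$-torsion classes by pairs of $n$-ary quadratic forms with a prescribed sign condition at each real place, which changes only the real-orbit count and hence only the factor of $2$ in the constant.
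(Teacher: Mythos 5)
Your overall architecture coincides with the paper's: Wood's parametrization of $2$-torsion classes by pairs of $n$-ary quadratic forms, orbit counts via averaging over fundamental domains for $\SL_n(\Z)$ and $\SL_2(\Z)\times\SL_n(\Z)$, a cusp analysis separating the distinguished (reducible) orbits from the main body, and local volume computations producing the constants $2^{1-r_1-r_2}$ and $2^{-r_2}$. (One bookkeeping point: by Proposition \ref{prophr} each order $\cO$ contributes $2^{r_1+r_2-1}|\Cl_2(\cO)|$ projective orbits, not $|\Cl_2(\cO)|$ minus an identity orbit --- the multiplicity comes from the unit-group kernel of $\rmH(\cO)\to\Cl_2(\cO)$ --- and the reducible projective orbits biject with $\cI_2(\cO)$ by Theorem \ref{threducible}; for maximal orders this is the single identity orbit, so your accounting ultimately lands in the right place.)

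The genuine gap is in your last two paragraphs, where you locate the conditionality of the theorem in the wrong step. The tail estimates \eqref{eqassumedte} have nothing to do with cusp strata: the cusp analysis (Propositions \ref{propirredcusp} and \ref{propfewred}) is unconditional. The estimates \eqref{eqassumedte} bound the number of orbits in $V(\Z)$ whose discriminant is divisible by $p^2$, uniformly over large $p$, and they are needed precisely for the sieve down to maximal orders. Your proposed ``uniform Ekedahl-type sieve'' yielding a summable $O(p^{-2})$ density for non-maximality at $p$ is exactly this unproven estimate: the geometric sieve of \cite{manjul-geosieve} applies to conditions defined modulo $p$ (which is why the paper can unconditionally sieve to \emph{projective} elements, Theorem \ref{thunifproj}, and hence prove Theorem \ref{thm:avgfororders} unconditionally), but non-maximality at $p$ is a mod-$p^2$ condition on $V$, and the error terms in orbit counts with congruence conditions modulo $p^2$ are not uniform enough in $p$ to sum over all primes. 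If your $O(p^{-2})$ estimate were available, you would prove unconditional equalities, contradicting the statement of the theorem itself. What is true unconditionally is only the upper bound: the count of orbits in any acceptable family is bounded above by the expected product of local volumes (Theorem \ref{thsieveupperbound}), while the denominator --- the count of binary $n$-ic forms with $R_f$ maximal --- is known exactly because the corresponding tail estimate on $U(\Z)$ is a theorem of \cite{BSW-part2} (Proposition \ref{thunifbcf}). You need to separate these two roles of ``the sieve'' to recover the actual shape of the theorem.
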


When $n = 3$, the Julia invariant of a ring $R_f$ associated to a
binary cubic form $f$ coincides with its discriminant, and the family
$\fR_{J}$ is essentially the same as the family of all cubic rings
ordered by discriminant. The mean size of the $2$-torsion subgroup of
class groups of totally real (resp. complex) cubic fields ordered by
discriminant was determined to be $5/4$ (resp. $3/2$) in
\cite{manjulcountquartic}, confirming Conjecture \ref{conj:malle} for
$n = 3$. Additionally, the average number of $2$-torsion elements in the
narrow class groups of totally real cubic fields ordered by
discriminant is $2$, which was proved by Bhargava and the third-named
author \cite{BV3}. On the other hand, even though the family $\fR_{H}$
also contains all cubic rings, each such ring occurs infinitely
often. Nevertheless, we determine that the average number of $2$-torsion
elements in class groups and narrow class groups of cubic fields
ordered by height coincides with the analogous results in
\cite{manjulcountquartic} and \cite{BV3} when ordering by
discriminant.

\begin{theorem}\label{thm:avgforcubicfields} We have 
\begin{itemize}
\item[\rm (a)] $\Avg_H(\fR_{H,\max}^{3,0},\Cl_2) = 5/4$,
\item[\rm (b)] $\Avg_H(\fR_{H,\max}^{1,1},\Cl_2) = 3/2$, and
\item[\rm (c)] $\Avg_H(\fR_{H,\max}^{3,0},\Cl_2^+) = 2$.
\end{itemize}
\end{theorem}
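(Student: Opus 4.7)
The proof follows the template of Theorem \ref{thm:avgforfields} specialized to $n=3$, but carried out in the height ordering rather than the Julia/discriminant ordering; the key advantage of the cubic case is that the relevant tail estimates can be established unconditionally, converting the bounds of Theorem \ref{thm:avgforfields}$(a),(b)$ into exact equalities. The central ingredient is Bhargava's parametrization of $2$-torsion ideal classes in cubic orders by $\SL_3(\bZ)$-orbits of pairs of integral ternary quadratic forms with a fixed binary cubic resolvent (the $n=3$ specialization of Wood's parametrization \cite{melanie-2nn}). Under this parametrization, for each primitive binary cubic form $f$ the identity class corresponds to a distinguished reducible orbit and the remaining orbits biject with $\Cl_2(R_f) \setminus \{e\}$; for part (c), one uses the narrow analogue with the orbit action taken with signs.

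First I would compute the denominator of \eqref{eq:defavg}: the number of primitive integral binary cubic forms $f$ of height $H(f) < X$ whose associated order $R_f$ is maximal and whose fraction field has signature $(r_1,r_2)$. The main term is a volume computation on the height-$X$ box in $\Sym_3(\bR^2)$, intersected with the appropriate signature component. Passing from all $f$ to Gorenstein $f$, and then to those with $R_f$ maximal, is handled by the usual Ekedahl sieve (the density of $f$ with $R_f$ non-maximal at $p$ is $O(p^{-2})$), which converges uniformly in $X$ in the height ordering.

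Next I would evaluate the numerator by averaging $|\Cl_2(R_f)|$, equivalently, by counting the non-identity $\SL_3(\bZ)$-orbits on pairs $(A,B)$ whose resolvent $f$ satisfies $H(f) < X$ and lies in the specified signature. Applying Bhargava's averaging method over a fundamental domain $\FF$ for the action of $\SL_3(\bZ)$ on pairs of ternary quadratic forms, the main body of $\FF$ contributes exactly $2^{1-r_1-r_2}$ times the denominator, with the factor $2^{1-r_1-r_2}$ arising from the archimedean comparison between the orbit space of pairs and their resolvent (as in \cite{BV3}). Combined with the $+1$ coming from the distinguished identity orbit, this yields the claimed averages $1+2^{1-r_1-r_2} = 5/4$ and $3/2$ in parts (a) and (b). For part (c), the narrow class group modification replaces the archimedean factor by $2^{-r_2} = 1$ in the totally real case, yielding $1+1 = 2$.

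The main obstacle is the unconditional tail estimate: one must show that the number of $\SL_3(\bZ)$-orbits of pairs $(A,B)$ lying in the cuspidal region of $\FF$ whose resolvent $f$ satisfies $H(f) < X$ is of strictly smaller order than the main term. In the discriminant (Julia) ordering, this is essentially \cite{BV3}; porting it to the height ordering amounts to replacing $|\disc(f)|$ by $H(f)$ throughout the cusp analysis, using the crude comparison $|\disc(f)| \ll H(f)^4$. Since the height-$X$ region is a genuine box in $\Sym_3(\bR^2)$ rather than the curved region $\{|\disc f| < X\}$, the lattice-point estimates on the cuspidal slices of $\FF$ in fact simplify, and the required tail bounds follow, completing the proof.
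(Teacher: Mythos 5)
Your overall architecture (parametrize $2$-torsion by $\SL_3(\Z)$-orbits of pairs of ternary quadratic forms, count orbits with resolvent of bounded height, divide by the count of maximal resolvents) is the paper's, and your final numbers are correct. But you have misidentified the ingredient that makes the cubic case unconditional, and the argument you offer for it does not close the actual gap. The cuspidal estimate is \emph{not} the obstacle: Proposition \ref{propirredcusp} disposes of the cusp unconditionally for every odd $n$, already in the height ordering, so there is nothing to ``port'' there. What is genuinely conditional for $n\ge 5$ --- and what you must supply for $n=3$ to upgrade the upper bounds of Theorem \ref{thm:avgforfields} to equalities --- is the uniformity estimate \eqref{eqassumedte} on the space $V(\Z)$ of \emph{pairs}: the number of orbits of $(A,B)$ of bounded height with $p^2\mid\Delta(A,B)$ for some prime $p\ge M$ must be $O(X^{n+1}/M^{1-\epsilon})+o(X^{n+1})$. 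This is needed because the lower bound requires discarding the orbits lying over non-maximal resolvents, and a single bad $f$ can carry an unbounded number of orbits ($2^{r_1+r_2-1}|\Cl_2(R_f)|$ of them), so the Ekedahl sieve on $U(\Z)$ that handles your denominator says nothing about the numerator. Your comparison $|\disc f|\ll H(f)^4$ and the observation that the height region is a box address neither point. In the paper this estimate for $n=3$ is imported from \cite[Proposition 23]{manjulcountquartic}, a nontrivial result proved via the relation between pairs of ternary quadratic forms and quartic rings; Theorem \ref{thm:avgforcubicfields} is then an immediate specialization of Theorem \ref{thmacceptablefam} to maximal cubic orders, using $|\cI_2(\cO)|=1$.

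A smaller inaccuracy: the non-identity projective orbits over $f$ are not in bijection with $\Cl_2(R_f)\setminus\{e\}$. By Proposition \ref{prophr} the projective orbits biject with the group $\rmH(\cO)$, of size $2^{r_1+r_2-1}|\Cl_2(\cO)|$, and by Proposition \ref{reducible} the reducible ones among them account for $|\cI_2(\cO)|$. Your accounting happens to land on the same value $1+2^{1-r_1-r_2}$, but the factor $2^{r_1+r_2-1}$ (and the corresponding archimedean stabilizer mass $\sigma(r_2)$) must appear on both sides of the comparison for the local volume computation of Theorem \ref{thmprodvol} to come out correctly; as written, your main-body count is off by exactly this factor.
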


In conjunction with Theorem $1$ of \cite{BV3}, Theorem \ref{thm:avgforcubicfields} gives evidence that
the Cohen-Lenstra-Martinet-Malle heuristics may hold for {any} natural ordering of fields, as they hold when ordering by either discriminant or height. Additionally, Theorem \ref{eqavgstatementth}(b) gives
evidence toward the prediction that the average number of $2$-torsion
elements in the narrow class groups of {\em all} isomorphism classes
of odd degree number fields with fixed signature $(r_1,r_2)$ is equal to
	\begin{equation}1 + 2^{-r_2},
	\end{equation}
	which additionally coincides with heuristics formulated by Dummit--Voight \cite{dummitvoight}.
 
\medskip 

Theorems \ref{thm:avgforfields} and \ref{thm:avgforcubicfields}
immediately imply that most fields within these families have
no nontrivial $2$-torsion elements in their class groups. By applying results of \cite{birchmerriman}, we may quantify
the number of such fields, even while allowing arbitrary splitting conditions at a finite set of primes.

\begin{theorem}\label{cor:oddclassno}
Fix an odd integer $n \geq 3$ and a corresponding signature
$(r_1,r_2)$. Let $S$ be a finite set of primes and for each prime $p
\in S$, fix a degree $n$ \'etale extension $M_p$ of $\bQ_p$.
\begin{itemize}
\item[\rm (a)] There are an infinite number of degree $n$ $S_n$-fields $K$
 with signature $(r_1,r_2)$ such
  that $K \otimes \bQ_p = M_p$ for each $p \in S$, and $K$ has odd
  class number. More precisely,
\begin{equation*}
  \#\bigl\{\ K \ : \ |\Disc(K)| < X \mbox{ and } 2 \nmid|\Cl(K)|\ \bigr\}
\gg X^{\frac{n+1}{2n-2}},
\end{equation*}
where the implied constants depend on $n$ and $S$.
\item[\rm (b)] If $r_2 \geq 1$, then there are an infinite number of
  degree $n$ $S_n$-fields $L$ with
  signature $(r_1,r_2)$ such that $L \otimes \bQ_p = M_p$ for each $p
  \in S$, and $L$ has odd narrow class number. More precisely,
\begin{equation*}
  \#\bigl\{\ L \ : \ |\Disc(L)| < X \mbox{ and } 2 \nmid|\Cl^+(L)| \ \bigr\}
  \gg X^{\frac{n+1}{2n-2}},
\end{equation*}
where the implied constants depend on $n$ and $S$.
\end{itemize}
\end{theorem}

Such results on the infinitude of fields with odd class number
originate with Gauss \cite{gauss}, who proved using genus theory that
the set of quadratic fields with class number indivisible by $2$ are
exactly the quadratic fields with prime discriminant. The first
generalization of Gauss's result to the indivisibility of class
numbers of quadratic fields by odd primes $p$ arise as applications of
the aforementioned results of Davenport--Heilbronn
\cite{davenportheilbronn2}, which imply that at least half of
imaginary quadratic fields and at least $5/6$ of real quadratic fields
have class number indivisible by $3$ when such fields are ordered by discriminant.
Nakagawa--Horie \cite{nakagawahorie} refined the proof of
\cite{davenportheilbronn2} to show that even after imposing certain
congruence conditions at a finite set of primes, the number of
such quadratic fields with class number indivisible by $3$ remains infinite;
this strengthening implies results such as the existence of infinitely
many hyperelliptic curves over $\bQ$ of a given genus with no integral
points. Finally, the results of Bhargava and the third-named author 
\cite{BV-3tors} imply that one can find an infinite number of quadratic fields 
with class number indivisible by $3$ and satisfying {\em any} (nonempty) local specifications 
at a finite set of primes.

In the imaginary quadratic case, Hartung \cite{Hartung} gave another
proof of the infinitude of fields with class number indivisible by $3$
using Kronecker--Weber relations. In conjunction with
trace formula methods, Horie \cite{HorieIwasawa,HorieTrace} extended
these results to determine that for all sufficiently large primes $p$,
there exist infinitely many imaginary quadratic fields with class
number indivisible by $p$ and satisfying prescribed splitting and
ramification conditions at a finite set of (odd) primes. Using the
indivisibility of coefficients of modular forms of half-integer
weight, Bruinier \cite{Bruinier} and Ono--Skinner \cite{onoskinner}
strengthened the result to include most primes $p \geq 5$ and a wider
class of local specifications that could be imposed at a finite number
of primes. Jochnowitz \cite{jochnowitz} also used such methods to
generalize the results of \cite{Hartung,HorieIwasawa,HorieTrace} to
the real quadratic case. The most general result was obtained by Wiles
\cite{wiles-classgroups} and Beckwith \cite{beckwith} using trace
formula methods in conjunction with the geometry of Shimura curves and
the theory of mock modular forms of half-integer weight,
respectively. Applications of such results include unconditional
versions of modularity lifting theorems in the residually reducible
case \cite{skinnerwiles} as well as the nonvanishing of certain
$L$-values associated to elliptic curves with rational torsion points
\cite{vatsal}.

Beyond the case of quadratic fields, the only known result of this
nature is Corollary 3 of \cite{BV3}, which implies that the majority
of cubic fields (of any signature) have odd class number. Theorem
\ref{cor:oddclassno} is the first of its kind to treat infinite (even
multiple) degrees and signatures. Additionally, it immediately implies
the following result concerning the narrow class number, which differs
from the class number at most by a factor of a power of 2.

\begin{corollary}\label{cor:units}
Let $n\geq 3$ be an odd integer. If $r_2 \geq 1$, then there are an
infinite number of degree $n$ $S_n$-fields with
signature $(r_1,r_2)$ for which the narrow class number equals the
class number. In particular, there are an infinite number of such
fields that have units of every signature\footnote{Recall that for any
  number field $K$ with $r_1$ distinct real embeddings, there is a
  signature homomorphism $\cO_K^\times \rightarrow \{\pm 1\}^{r_1}$
  that takes a unit to its {\em signature}, i.e., to the sign of its
  image under each real embedding.}.
\end{corollary}

\medskip

Our methods are not limited to studying class groups of ({maximal} orders in) number
fields; we also study the ideal class groups of general orders in $\fR^{r_1,r_2}_H$ and $\fR^{r_1,r_2}_J$. Specifically, for
each odd $n \geq 3$, we compute on average how many $2$-torsion ideal
classes in the class groups of such orders arise from nontrivial
elements of order $2$ in the {\em ideal groups} of such orders.
More precisely, if $\cO$ is an
order in a number field, let the ideal group $\cI(\cO)$ be the
group of invertible fractional ideals of $\cO$ (which the class
group $\Cl(\cO)$ is a quotient of). Denote the $2$-torsion
subgroups of $\Cl(\cO)$ and $\cI(\cO)$ by $\Cl_2(\cO)$ and
$\cI_2(\cO)$ for any prime $p$. Although $\cI_2(\cO)$ is trivial for
maximal orders $\cO$, this is not always true for non-maximal
orders $\cO$.

In \cite{BV3}, the mean value of the difference
$|\Cl_2(\cO)| - 2^{1-r_1-r_2}|\cI_2(\cO)|$ is determined to be 1,
when averaging over maximal orders $\cO$ in cubic fields of a fixed
signature $(r_1,r_2)$, over all orders in such cubic fields, or even
over certain {acceptable} families of orders defined by local
conditions (in all cases ordered by discriminant).  An analogous result is also known
for $3$-torsion ideal classes of acceptable families of
quadratic orders and fields (see \cite{BV-3tors}). In this paper, we
obtain a similar statement for $\fR^{r_1,r_2}_J$ and $\fR^{r_1,r_2}_H$:

\begin{theorem} \label{thm:avgfororders}
Fix an odd integer $n \geq 3$ and signature $(r_1,r_2)$.
\begin{itemize}
\item[\rm (a)] The average size of
$$|\Cl_2(\cO)| - \frac{1}{2^{r_1+r_2-1}} |\mathcal{I}_2(\cO)|$$
over $\cO \in \fR^{r_1,r_2}_H$ ordered by height or over $\cO \in \fR^{r_1,r_2}_J$ ordered by Julia invariant is $1$.
\item[\rm (b)] The average size of
$$|\Cl_2^+(\cO)| - \frac{1}{2^{r_2}} |\mathcal{I}_2(\cO)|$$
over $\cO \in \fR^{r_1,r_2}_H$ ordered by height or over $\cO \in \fR^{r_1,r_2}_J$ ordered by Julia invariant is $1$.
\end{itemize}
\end{theorem}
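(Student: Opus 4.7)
The strategy is to apply Wood's parametrization \cite{melanie-2nn}, which provides a bijection between $\SL_n(\Z)\times\SL_2(\Z)$-orbits on the lattice $V(\Z) := \Z^2 \otimes \Sym_2(\Z^n)$ of pairs of integral $n$-ary quadratic forms (under the invariant map $(A,B) \mapsto \det(xA - yB) \in \Sym_n(\Z^2)$) and equivalence classes of pairs $(I,\delta)$ attached to $\cO_f := R_f$, where $I$ is a fractional $\cO_f$-ideal, $\delta \in \Frac(\cO_f)^\times$ with $I^2 = (\delta)\cO_f$, and $\delta$ lies in a prescribed square class in $(\cO_f \otimes \R)^\times$ (or in the totally positive cone, for the narrow variant), modulo the $\cO_f^\times$-action $(I,\delta) \mapsto (uI, u^2\delta)$. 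Carefully accounting for the actions of $\cO_f^\times/(\cO_f^\times)^2$ and the component group $\pi_0((\cO_f\otimes\R)^\times)$, one extracts for each primitive integral binary $n$-ic form $f$ of signature $(r_1,r_2)$ an arithmetic identity of the shape
\begin{equation*}
1 + N(f) \;=\; 1 + 2^{r_1+r_2-1}\bigl(|\Cl_2(\cO_f)| - 2^{1-r_1-r_2}|\cI_2(\cO_f)|\bigr),
\end{equation*}
where $N(f)$ counts non-identity $\SL_n(\Z)\times\SL_2(\Z)$-orbits on $V(\Z)$ with invariant $f$; the isolated ``$1$'' corresponds to one distinguished orbit, namely the image of an explicit identity section $f \mapsto (A_0(f), B_0(f))$. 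The narrow version (b) arises from the same identity with $r_1+r_2-1$ replaced by $r_2$.

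Given this identity, the theorem reduces to proving the orbit-counting asymptotic
\begin{equation*}
\sum_{\substack{f\text{ primitive},\ \text{sign.}(r_1,r_2)\\ H(f)<X}} N(f) \;=\; 2^{r_1+r_2-1} \sum_{\substack{f\text{ primitive},\ \text{sign.}(r_1,r_2)\\ H(f)<X}} 1 \;+\; o\bigl(X^{n+1}\bigr),
\end{equation*}
and its Julia-invariant analog. I would count $\SL_n(\Z)\times\SL_2(\Z)$-orbits on $V(\Z)$ of bounded invariant via Bhargava's averaging method: take a Borel fundamental domain $\FF \subset V(\R)$, thicken it by averaging over a compact subset of $\SL_n(\R)\times\SL_2(\R)$, swap the order of integration, and extract the leading-order volume. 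The main-body volume decomposes as a product of the archimedean orbit count (precisely $2^{r_1+r_2-1}$ real $(I,\delta)$-classes with any fixed real invariant of signature $(r_1,r_2)$) and the count of forms $f$ of bounded invariant; for the Julia ordering one proceeds identically after first reducing $f$ by $\SL_2(\Z)$ using the estimates of Bhargava--Yang \cite{BY-Julia}.

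The principal obstacle is the cusp analysis. In contrast to Theorem~\ref{thm:avgforfields}, where one sieves to maximal $\cO_f$ and may discard cuspidal pairs with non-maximal rings, \emph{every} primitive $f$ contributes here, so the cuspidal orbits must be enumerated rather than thrown away. I would stratify the cusp of $\FF$ according to which coordinates of $(A,B)$ are small, and show that the ``shallowest'' stratum --- consisting exactly of orbits descending to the identity in $\Cl_2(\cO_f)$ --- contributes precisely the distinguished orbit per $f$ (and hence the ``$1$'' on the right-hand side), while each deeper stratum is handled by projecting onto a suitable subspace and bounding the lattice-point count in a homogeneously expanding region, giving a total contribution of $o(X^{n+1})$. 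Because no sieve to maximal orders is invoked, no tail estimate is needed and the theorem is unconditional; the primitivity and Gorenstein conditions are imposed by a uniformly convergent local sieve at each prime. The narrow analog (b) follows from the same cusp decomposition with the real-sign data replaced by narrow-sign data.
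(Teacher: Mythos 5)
Your high-level strategy (Wood's parametrization, orbit counting by Bhargava's averaging method, reduction to an asymptotic count of orbits with bounded invariant) is the same as the paper's, but there are genuine gaps in the two steps that carry the real content. First, your arithmetic identity and its interpretation are off. The set of projective orbits with invariant $f$ has size $|\rmH(\cO_f)|=2^{r_1+r_2-1}|\Cl_2(\cO_f)|$, and the quantity $2^{r_1+r_2-1}|\Cl_2(\cO_f)|-|\cI_2(\cO_f)|$ is obtained by deleting not ``one distinguished orbit'' but the full set of $|\cI_2(\cO_f)|$ orbits corresponding to the pairs $(\II,\delta)$ with $\delta$ a square; likewise the orbits ``descending to the identity in $\Cl_2(\cO_f)$'' number $2^{r_1+r_2-1}$ (the unit contribution $\cO^\times_{N=1}/(\cO^\times)^2$), not one, and they are a different subset from the $\cI_2$ orbits. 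The essential missing idea is the geometric characterization of the orbits to be excluded: one must prove that $(\II,\delta)$ has $\delta$ a square if and only if the quadrics $A,B$ share a rational isotropic subspace of dimension $(n-3)/2$ (Theorem \ref{threducible}), so that these orbits can be discarded in the lattice-point count as ``reducible'' points, which are then shown to be negligible in the main body of the fundamental domain. This equivalence is a nontrivial theorem (proved in the paper by showing a certain matrix entry is a polynomial in $\theta$ taking square values at all integers); your proposal treats the exclusion as bookkeeping and proposes instead to locate these orbits in a cusp stratum, which is not how they sit — reducible orbits need not lie in the cusp, and the cusp analysis in the paper goes the other way (irreducible points in the cusp are negligible, reducible points in the main body are negligible, and the subtraction of $|\cI_2(\cO_f)|$ is purely algebraic).

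Second, your volume computation would not produce the correct constant as described. The number of real $(\II,\delta)$-classes over a fixed $f$ of signature $(r_1,r_2)$ is $2^{r_1-1}$ (the norm-one part of $\{\pm1\}^{r_1}$), not $2^{r_1+r_2-1}$; the latter is the size of the real stabilizer. The factor $2^{r_1+r_2-1}$ in the final asymptotic arises only after combining the $2^{r_1-1}$ real classes, division by the stabilizer $2^{r_1+r_2-1}$, and — crucially — the $2$-adic local mass $m_2=2^{n-1}$ coming from $|(R^\times/(R^\times)^2)_{N\equiv1}|/|R^\times[2]_{N\equiv1}|$ over $\Z_2$, which your sketch omits entirely. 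Finally, the unconditionality is correct but for a reason you do not supply: the sieve needed here is to \emph{projective} elements of $V(\Z_p)$, and the uniform tail estimate for that sieve holds because non-projectivity at $p$ is shown (via Nakayama's lemma) to be a condition defined modulo $p$, so Bhargava's geometric sieve applies; primitivity of $f$ alone does not give this.
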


In fact, we prove a much stronger statement indicating that the above
averages remain equal to $1$ when taken over any {\em very large}
family in $\fR^{r_1,r_2}_H$ or $\fR^{r_1,r_2}_J$ (see Definition \ref{6.1}). For any {\em acceptable} family in $\fR^{r_1,r_2}_{H}$ or
$\fR^{r_1,r_2}_{J}$ (as defined in \S\ref{acc}), the analogous averages are shown to have an
upper bound equal to 1; furthermore, conditional on the tail estimates in (\ref{eqassumedte}), averages over acceptable families in  $\fR^{r_1,r_2}_{H}$ and
$\fR^{r_1,r_2}_{J}$ also have lower bound equal to 1 (see Theorem \ref{thmacceptablefam}). Some notable acceptable families include  $\fR^{r_1,r_2}_{H,\max}$ and
$\fR^{r_1,r_2}_{J,\max}$ as well as subfamilies of $\fR^{r_1,r_2}_{H,\max}$ and $\fR^{r_1,r_2}_{J,\max}$ that are defined by local conditions at any finite set of primes. 

\medskip

Our strategy for proving Theorems \ref{thm:avgforfields},
\ref{thm:avgforcubicfields}, and \ref{thm:avgfororders} uses Wood's
parametrization \cite{melanie-2nn} of $2$-torsion ideal classes of
rings in $\fR^{r_1,r_2}_H$ and $\fR^{r_1,r_2}_J$ by certain integral
orbits of the representation $\bZ^2 \otimes \Sym^2(\bZ^n)$; we then
determine asymptotic counts of the relevant orbits using
geometry-of-numbers techniques developed by
\cite{manjulcountquartic,manjulcountquintic,5sel}.  However, our
geometry-of-numbers arguments are complicated by the fact that we
simultaneously consider an infinite set of representations, one for
each odd $n \geq 3$, which have increasingly intricate invariant
rings. Similar infinite sets have been handled
previously in \cite{manjul-hyper, bg-hyper, BGW}.
An essential ingredient for our result is a sieve that counts binary
$n$-ic forms that correspond to maximal rings (equivalently, degree $n$ fields). For the family of binary $n$-ic forms
ordered by height, this sieve is carried out in \cite{BSW-part2}, and we carry out an analogous sieve for binary
$n$-ic forms ordered by Julia invariant.

When ordering by height, we study the orbits of
$\SL_n(\bZ)$ acting on the space $\bZ^2 \otimes \Sym_2(\bZ^n)$
of pairs $(A,B)$ of integral $n$-ary quadratic forms. Each such pair gives rise
to an {\em invariant binary $n$-ic form}
	$$f_{(A,B)}(x,y) := \det (Ax-By)$$ when $A$ and $B$ are viewed
as symmetric $n \times n$ matrices. If $R_f \in \fR^{r_1,r_2}_H$ for
some signature $(r_1,r_2)$, then certain {\em projective}
$\SL_n(\bZ)$-orbits of pairs $(A,B)$ with invariant binary $n$-ic
$f_{(A,B)} = f$ are equipped with a composition law coming from the
group structure on the $2$-torsion subgroup of the class group of
$R_f$. (The notion of projectivity is defined in \S2.3.) This
implies that the number of such orbits is determined by the number of
$2$-torsion ideal class elements of $R_f$. Thus, to compute the
averages when ordering by height in Theorem \ref{thm:avgfororders}, we
compare the number of rings (with multiplicity) in $\fR^{r_1,r_2}_H$
of bounded height to the number of relevant $\SL_n(\bZ)$-orbits whose
binary $n$-ic invariant is bounded by the same height. To obtain
Theorems \ref{thm:avgforfields} and \ref{thm:avgforcubicfields}, we
restrict to maximal orders, namely those rings $R_f \in
\fR^{r_1,r_2}_{H,\max}$; however, a conjectural tail estimate is
required to obtain a lower bound.

When ordering by Julia invariant, we count the number of $\SL_2(\bZ) \times \SL_n(\bZ)$-orbits of $\bZ^2 \otimes \Sym_2(\bZ^n)$ relative to the number of $\SL_2(\bZ)$-orbits of $\Sym_n(\bZ^2)$. As described above, the rings $R_f$ associated to a binary $n$-ic form $f$ are invariant under the action of $\SL_2(\bZ)$ on $f$, i.e., for any $\gamma f \in [f] = \SL_2(\bZ)\cdot f$, we have  $R_{\gamma f} \cong R_f$. It follows from \cite{melanie-2nn} that if $\cO_{[f]} \in \fR^{r_1,r_2}_J$ for some signature $(r_1,r_2)$, then {\em projective} $\SL_2(\bZ)\times \SL_n(\bZ)$-orbits of pairs of $n$-ary quadratic forms $(A,B)$ with $[f_{(A,B)}] = [f]$ are in bijection with $2$-torsion elements of the class group of $\cO_{[f]}$. We then use the same geometry-of-numbers methods utilized when ordering by height to conclude Theorems \ref{thm:avgforfields} and \ref{thm:avgfororders} when ordering by Julia invariant. 
Note that when $n = 3$, the Julia invariant coincides with the discriminant of a binary cubic form, and so our argument can be viewed as a generalization of that given in \cite{BV3}.

We now give a short description of the organization of the
paper. In Section \ref{sec:param}, we recall and expand on the details
of the construction of rings $R_f$ of rank $n$ from binary $n$-ic forms
$f$ given in \cite{nakagawa,rings}. We also describe the correspondence given in \cite{melanie-2nn} between
$\SL_n$-orbits of pairs of $n$-ary quadratic forms and order $2$ ideal
classes of such rings $R_f$. Section 3 discusses asymptotic counts of acceptable families in $\fR^{r_1,r_2}_H$ and $\fR^{r_1,r_2}_J$. Section
\ref{sec:counting} focuses on using geometry-of-numbers methods to
count the projective integral orbits of pairs of $n$-ary
quadratic forms whose binary
$n$-ic invariant $f$ is contained in $\fR^{r_1,r_2}_H$ or $\fR^{r_1,r_2}_J$. In
Section \ref{sec:sieve}, we describe several sieves that allow us to
restrict our count from Section \ref{sec:counting} to orbits that
correspond to invertible ideal classes in orders (or maximal
orders). Finally, in Section \ref{sec:proofmain}, the analytic methods
in Sections \ref{sec:counting} and \ref{sec:sieve} are combined
with the algebraic interpretation of the orbits given in Section
\ref{sec:param} to conclude the main results.

\subsubsection*{Acknowledgments}
We thank Manjul Bhargava, Christopher Delaunay, Robert Harron, Gunter Malle, Michael
Stoll, Xiaoheng Wang, Melanie Matchett Wood, and Myungjun Yu for helpful
conversations and comments. We also thank the anonymous referees for
many useful suggestions. The first-named author was supported by NSF grant
DMS-1406066 and the third-named author by NSF grant DMS-1502834.

\section{Parametrizations of $2$-torsion ideal classes and composition laws} \label{sec:param}

Let $n\geq 3$ be a fixed odd integer.  In this section, we begin by
recalling from \cite{nakagawa,rings} how rings of rank $n$ naturally
arise from integral binary $n$-ic forms. We then recall the
parametrization given in \cite{melanie-2nn} of $2$-torsion ideal
classes in such rings by orbits of pairs of $n$-ary quadratic forms.
In \S \ref{sec:composition}, we describe a composition law for certain
orbits of pairs of $n$-ary quadratic forms, arising from the group law
on ideal classes in rings. In \S \ref{sec:reducible}, we discuss
``reducible'' elements in the space of such integral pairs and the
properties of the corresponding $2$-torsion ideal classes via
the parametrization; these are elements that will be excluded in the
volume computations in later sections. Finally, in \S
\ref{sec:orbitstab}, we use a rigidified version of the
parametrization theorem in \cite{melanie-2nn} over principal ideal
domains to explicitly describe the stabilizers and orbits of these
representations for a few specific base rings.

\subsection{Rings associated to binary $n$-ic forms}\label{rings}

We first describe the construction of a rank $n$ ring over $\Z$ and
ideals from an integral binary $n$-ic form. Let $f(x,y) = f_0 x^n +f_1
x^{n-1}y +\cdots + f_ny^n$, where $f_i \in \bZ$. We begin with the case where
$f_0 \neq 0$, and let $B_{f_0} = \bZ[\frac{1}{f_0}]$. Define the ring $R_f$ as a
subring of $B_{f_0}[\theta]/f(\theta,1)$, generated as a $\bZ$-module
as
	\begin{equation} \label{eq:basisRf}
		R_f = \langle 1, f_0 \theta, f_0 \theta^2 + f_1
                \theta, \dots, f_0 \theta^{n-1} + f_1 \theta^{n-2} +
                \dots + f_{n-2} \theta \rangle.\end{equation} For $k
        > 0$, define $\zeta_k = f_0 \theta^k + \cdots + f_{k-1}
        \theta$, and let $\zeta_0 = 1$. It is shown in both \cite{nakagawa} and
        \cite{rings} that $R_f = \langle \zeta_0, \ldots,
        \zeta_{n-1}\rangle$ is closed under multiplication and thus is a ring.  We
        define the following $\bZ$-submodule of
        $B_{f_0}[\theta]/f(\theta,1)$:
	\begin{equation} \label{eq:If}
		\If = \langle 1, \theta, \zeta_{2},\dots, \zeta_{n-1}\rangle.
	\end{equation}
As shown in \cite{nakagawa,rings}, the module $\If$ is closed under
multiplication by elements of $R_f$ and thus is an ideal of $R_f$. It
is easy to check that for $0 \leq k \leq n-1$, we have
	\begin{equation}\label{eq:Ifk}
	\If^{k} = \langle 1, \theta, \theta^2, \hdots, \theta^{k},
        \zeta_{k+1},\cdots, \zeta_{n-1}\rangle
	\end{equation}
as a $\bZ$-submodule of $B_{f_0}[\theta]/f(\theta,1)$. For $n$ odd, the ideal $\If^{n-3}$ is a square of the ideal $\If^{\frac{n-3}{2}}$, which has the following explicit basis as a $\bZ$-module:
	$$\If^{\frac{n-3}{2}} = \langle 1, \theta, \theta^2, \hdots,
\theta^{\frac{n-3}{2}}, \zeta_{\frac{n-3}{2}+1}, \hdots, \zeta_{n-1}
\rangle.$$ Additionally, there is a natural action of $\gamma \in
\GL_2(\bZ)$ on the set of binary $n$-ic forms $f$ sending $\gamma
\cdot f(x,y) = f((x,y)\gamma)$; under this action, the ring $R_f$ and
the ideal $\If$ (and its powers) are invariant (up to isomorphism). If
$f$ is irreducible, then $R_f$ is an order of
$\bQ[\theta]/f(\theta,1)$, and the discriminants of $R_f$ and $f$
coincide \cite[Proposition 1.1]{nakagawa}. In addition, the form $f$
is {\em primitive} (i.e., the gcd of its coefficients is 1) if and
only if $R_f$ is Gorenstein, which is equivalent to the property that
$\If$ is an invertible fractional ideal \cite[Prop.~2.1 and
  Cor.~2.3]{rings}.

In fact, by recording the basis \eqref{eq:Ifk}, the ideals $\If^k$ may
be considered as {\em based ideals} of $R_f$, i.e., ideals of $R_f$
along with an ordered basis as a rank $n$ $\Z$-module.  The {\em norm}
$\Norm(\II)$ of a based ideal $\II$ of $R_f$ is the determinant of the
$\bZ$-linear transformation taking the chosen basis of $\II$ to the
basis of $R_f$ given by \eqref{eq:basisRf}.

We also introduce dual elements to $\theta^k$ for all $0 \leq k \leq
n-1$. Let $\{
\check{\theta}_0,\check{\theta}_1,\hdots,\check{\theta}_{n-1} \}$
be the $B_{f_0}$-module basis of
$\Hom_{B_{f_0}}(B_{f_0}[\theta]/f(\theta,1),B_{f_0})$ dual to $\{
1, \theta, \theta^2, \hdots, \theta^{n-1}\}$. Additionally,
define $\check{\zeta}_{n-1} := \frac{\check{\theta}_{n-1}}{f_0}$, and
note that $\check{\zeta}_{n-1}(\zeta_k) = \delta_{k,n-1}$ for all $0
\leq k \leq n-1$.  In \cite[Proposition 2.1]{melanie-2nn}, Wood
computes that for any $r \in B_{f_0}[\theta]/f(\theta,1)$ and
$0 \leq k \leq n - 2$,
	\begin{equation}\label{melanie}
	\check{\theta}_{k}(r) = \check{\zeta}_{n-1}(\zeta_{n-1-k} r) +
        f_{n-1-k} \check{\zeta}_{n-1}(r),
        \end{equation}
which will be useful for computations in the following section.

\begin{remark} \label{rmk:f0zero}
If $f_0 = 0$ but $f \not\equiv 0$, there exists a $\GL_2(\Z)$-transformation
that takes $f$ to another binary $n$-ic form $f'$ with a nonzero leading coefficient. To
obtain the ring $R_f$ and the ideal class $\If$ (which are, up to isomorphism, $\GL_2(\Z)$-invariant), 
one may use the above constructions for $f'$ (see \cite[\S 2]{rings}).
\end{remark}

The above construction holds if one replaces $\bZ$ with any integral
domain $T$ (see \cite{rings}); this gives an explicit way of associating a
ring $R_f$, which is rank $n$ as a $T$-module, and a distinguished
(based) ideal $\If$ of $R_f$ to a binary $n$-ic form over $T$. We
refer to $R_f$ as the {\em ring associated to $f$} and $\If$ as the
{\em distinguished ideal} of $R_f$ or $f$. Geometrically, for nonzero
forms $f$, the ring $R_f$ is the ring of functions on the subscheme
$X_f$ of $\PP^1_T$ cut out by the binary $n$-ic form $f$, and the
ideal $\If^k$ is the pullback of $\mathcal{O}(k)$ from $\PP^1_T$ to
$X_f$ (see \cite[Theorem 2.4]{rings}).

We are interested in counting the $2$-torsion ideal classes of the rings
$R_f$ associated to irreducible forms $f$ when $n$ is odd. A key
ingredient is a parametrization of such ideal classes in terms of
pairs of $n \times n$ symmetric matrices, which we recall next.

\subsection{Parametrization of order 2 ideal classes in $R_f$} \label{par}

For any base ring $T$, let $U(T) = \Sym_n (T^2)$ denote the space of binary $n$-ic forms with coefficients in $T$. Let
$V(T) = T^2 \otimes \Sym_2 (T^n)$ denote the space of pairs
$(A,B)$ of symmetric $n \times n$ matrices with coefficients $a_{ij}$
of $A$ and $b_{ij}$ of $B$ in $T$ (for $1 \leq i,j \leq n$) where
$a_{ij} = a_{ji}$ and $b_{ij} = b_{ji}$. The group $\SL_n(T)$ acts
naturally on $V(T)$, where $\gamma \in \SL_n(T)$ acts on $(A,B)$ by
	\begin{equation} \label{eq:GLnaction}
	\gamma (A,B) = (\gamma A \gamma^t, \gamma B \gamma^t).
	\end{equation}
The map $\pi: V(T) \rightarrow U(T)$ sending $(A,B) \mapsto
\det(Ax - By)$ is clearly $\SL_n(T)$-equivariant. We call $f_{(A,B)} := \pi(A,B)$ the {\em binary $n$-ic invariant} 
or {\em resolvent form} of the pair $(A,B)$
(or of the $\SL_n(T)$-equivalence class of $(A,B)$). Recall that a binary $n$-ic
form $f$ is {\em nondegenerate} if and only if its discriminant $\Delta(f)$ is nonzero,
and we will call the pair $(A,B)$ {\em nondegenerate} if and only if $f_{(A,B)}$ is.
In \cite[Thm.~1.3]{melanie-2nn}, Wood describes the $\SL_n(\Z)$-orbits of $V(\Z)$ in terms
of fractional ideals of the rings $R_f$ from \S \ref{rings}:

\begin{thm}[\cite{melanie-2nn}] \label{thm:paramZ}
Let $f \in U(\Z)$ be a nondegenerate primitive binary $n$-ic form with integral coefficients. Then there is a bijection between
$\SL_n(\Z)$-orbits of $(A,B) \in V(\Z)$ with $f_{(A,B)} = f$ and equivalence classes of pairs $(\II,\delta)$ where $\II$ is a fractional
ideal of $R_f$ and $\delta \in (R_f \otimes_\Z \Q)^\times$ with $\II^2 \subset \delta \If^{n-3}$ as ideals and $\Norm(\II)^2 = \Norm(\delta) \Norm(\If^{n-3})$.
Two pairs $(\II,\delta)$ and $(\II',\delta')$ are equivalent if there exists $\kappa \in (R_f \otimes_\Z \Q)^\times$ such that $\II' = \kappa \II$ and $\delta' = \kappa^2 \delta$.
\end{thm}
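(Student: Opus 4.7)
The plan is to construct explicit $\SL_n(\Z)$-equivariant maps in both directions. Given $(A,B) \in V(\Z)$ with $f_{(A,B)} = f$, I view $A$ and $B$ as symmetric bilinear pairings on $\Z^n$. Since $\Delta(f) \neq 0$, at least one generic $\Z$-linear combination of $A$ and $B$ is invertible over $\Q$, so we may work with $\theta := A^{-1}B \in \Mat_n(\Q)$ (after a harmless replacement of $A$ by $A + tB$ if $\det A = 0$, which does not affect the construction). By Cayley--Hamilton applied to the identity $\det(Ax - By) = f(x,y)$, we have $f(\theta, 1) = 0$, so $\Q^n$ becomes a module over $R_f \otimes_\Z \Q = \Q[\theta]/f(\theta,1)$. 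A direct computation, using that $B = A\theta$ has integer entries and the generators $\zeta_k = f_0\theta^k + \cdots + f_{k-1}\theta$ from \eqref{eq:basisRf}, shows each $\zeta_k$ preserves the lattice $\Z^n$; thus $\II := \Z^n$ is naturally a fractional ideal of $R_f$.

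The scalar $\delta \in (R_f\otimes\Q)^\times$ is extracted by matching $A$ against multiplication in $\II$. By \eqref{melanie}, $\check{\theta}_{n-1} = f_0 \check{\zeta}_{n-1}$ and $\check{\theta}_{n-2}(r) = \check{\zeta}_{n-1}(\theta r) + f_1 \check{\zeta}_{n-1}(r)$, so there is a unique $\delta$ with $A(v,w) = \check{\zeta}_{n-1}(\delta^{-1} vw)$ for all $v,w \in \II$; then $B(v,w) = \check{\zeta}_{n-1}(\delta^{-1} \theta vw)$ follows automatically from $B = A\theta$. Integrality of $A$ and $B$, combined with \eqref{eq:Ifk} which identifies $\If^{n-3}$ as the $\Z$-span on which $\check{\theta}_{n-1}$ and $\check{\theta}_{n-2}$ behave predictably, forces $\delta^{-1}\II^2 \subset \If^{n-3}$. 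The norm identity $\Norm(\II)^2 = \Norm(\delta)\Norm(\If^{n-3})$ follows from comparing determinants on both sides of this containment, reading the relevant invariants off the coefficients of $f = \det(Ax - By)$.

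For the inverse direction, given $(\II, \delta)$ satisfying the hypotheses, I choose a $\Z$-basis $(e_1, \ldots, e_n)$ of $\II$ and define symmetric integer matrices $A, B$ by $A_{ij} := \check{\theta}_{n-1}(\delta^{-1} e_i e_j)$ and $B_{ij} := \check{\theta}_{n-2}(\delta^{-1} e_i e_j)$. The containment $\II^2 \subset \delta \If^{n-3}$, together with the fact that $\check{\theta}_{n-1}$ and $\check{\theta}_{n-2}$ are integer-valued on $\If^{n-3}$ (read off from \eqref{eq:Ifk}), makes $A$ and $B$ integral, and the norm condition then pins down $f_{(A,B)} = f$. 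A change of $\Z$-basis of $\II$ replaces $(A,B)$ by an $\SL_n(\Z)$-translate (with sign absorbed into $\kappa = -1$), and replacing $(\II, \delta)$ by $(\kappa \II, \kappa^2 \delta)$ leaves $(A,B)$ unchanged, so the two equivalence relations correspond exactly; the two constructions are inverse by a direct basis-level computation using \eqref{melanie}.

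The main obstacle is the lattice-level bookkeeping. Establishing that $\II = \Z^n$ equipped with multiplication by $\theta = A^{-1}B$ is truly stable under each generator $\zeta_k$ of $R_f$, and dually that the forms $A,B$ built from $(\II, \delta)$ have integer entries, both rely on the primitivity of $f$, which by \cite[Cor.~2.3]{rings} makes $\If$ invertible and ensures the explicit bases \eqref{eq:basisRf} and \eqref{eq:Ifk} behave well. The careful translation via \eqref{melanie} between the dual basis $\{\check{\theta}_k\}$ on $R_f \otimes \Q$ and the intrinsic multiplication pairing on $\II$ is the technical heart of the argument; once this is in place, $\SL_n(\Z)$-equivariance and compatibility of the equivalence relations are formal.
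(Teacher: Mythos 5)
First, a framing point: the paper does not prove Theorem~\ref{thm:paramZ} at all --- it is imported verbatim from \cite[Thm.~1.3]{melanie-2nn}, and \S\ref{par} merely records the two explicit maps (which your construction essentially reproduces) because formulas such as \eqref{eqCk} are needed later in \S\ref{sec:reducible}. Measured as an actual proof, your sketch has a genuine gap at its central step. You assert that ``a direct computation'' shows each $\zeta_k=f_0\theta^k+\cdots+f_{k-1}\theta$, with $\theta=A^{-1}B$, preserves $\Z^n$. This is precisely the hard content of Wood's theorem (cf.\ \cite[Prop.~3.3]{melanie-2nn} and the surrounding structure computations), and it is also what you need for the containment $\delta^{-1}\II^2\subset\If^{n-3}$: integrality of $A$ and $B$ alone only controls the $\zeta_{n-1}$- and $\zeta_{n-2}$-coordinates of $\delta^{-1}vw$, and the integrality of the remaining coordinates $c^{(k)}_{ij}$ is exactly the integrality of the matrices $C^{(k)}$ in \eqref{eqCk}. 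Already for $k=2$ one must show $M^2+f_1M\equiv 0\pmod{f_0}$ for $M=\mathrm{adj}(A)B$, whereas Cayley--Hamilton only yields $M^{n-1}(M+f_1I)\equiv 0\pmod{f_0}$; the statement is true, but it uses the symmetry of $A$ and $B$ in an essential way and requires a real argument. Without this lemma neither direction of your bijection is well defined.

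Second, your inverse map is mis-normalized. With the paper's conventions $\check\zeta_{n-1}=\check\theta_{n-1}/f_0$ and, from \eqref{melanie}, $\check\theta_{n-2}(r)=f_0\check\zeta_{n-1}(\theta r)+f_1\check\zeta_{n-1}(r)$ (you dropped the $f_0$), so $a_{ij}=\check\zeta_{n-1}(\delta^{-1}\alpha_i\alpha_j)$ while your $A_{ij}:=\check\theta_{n-1}(\delta^{-1}e_ie_j)$ equals $f_0a_{ij}$ and your $B_{ij}:=\check\theta_{n-2}(\delta^{-1}e_ie_j)$ equals $f_0b_{ij}+f_1a_{ij}$. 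The resulting pair has resolvent $f_0^{\,n}f\bigl(x-(f_1/f_0)y,\,y\bigr)$ rather than $f$, so the claim that ``the norm condition then pins down $f_{(A,B)}=f$'' fails as written. This is repairable by using $\check\zeta_{n-1}$ and $\check\zeta_{n-1}(\theta\,\cdot)$ in place of $\check\theta_{n-1}$ and $\check\theta_{n-2}$, but together with the unproved integrality lemma the proposal amounts to a description of the bijection rather than a proof that it exists.
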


For forms $f$ with $f_0 \neq 0$ (see Remark \ref{rmk:f0zero}), we now explicitly describe the bijective
map of Theorem \ref{thm:paramZ}, as some of these computations will be
needed in \S \ref{sec:reducible}. Fix a primitive nondegenerate binary cubic form $f(x,y) = f_0 x^n +f_1 x^{n-1}y
+ \dots + f_ny^n\in U(\bZ)$ with $f_0 \neq 0$, and let $R_f$ denote the ring described in \eqref{eq:basisRf}.

We begin by constructing an element of $V(\bZ)$ from a pair
$(\II,\delta)$ where $\II$ denotes a fractional ideal of $R_f$ and $\delta$ denotes an invertible
element of $R_f \otimes_{\bZ} \bQ$ such that $\II^2 \subset \delta
\If^{n-3}$ and $\Norm(\II)^2 = \Norm(\delta) \Norm(\If^{n-3})$. Under these assumptions, we can define a map
	\begin{align}\label{abstractvarphi}
	\varphi: \II \otimes_{R_f} \II &\rightarrow \If^{n-3} \\
	\alpha \otimes \alpha' &\mapsto \frac{\alpha \alpha'}{\delta}.\nonumber
    \end{align}
For the $\bZ$-module $\langle 1,\theta,\dots,\theta^{n-3}\rangle$, there is a
quotient map $\If^{n-3} \rightarrow \If^{n-3}/\langle
1,\theta,\dots,\theta^{n-3}\rangle$, and when $\varphi$ is composed with this quotient map, it gives a symmetric bilinear map that corresponds to an $\SL_n(\Z)$-orbit of $
V(\bZ)$. Equivalently, let $\alpha_1, \dots, \alpha_n$  in  $R_f \otimes_{\bZ} \bQ$ denote elements that generate $\II$ over $\bZ$ and for which the change-of-basis
matrix from $\langle \zeta_0, \zeta_1, \dots, \zeta_{n-1} \rangle$ to 
$\langle \alpha_1, \dots, \alpha_n \rangle$ has positive determinant. 
From the assumption that $\II^2 \subset \delta \If^{n-3}$,
we have that for all $i,j \in \{1,\ldots, n\}$,
	\begin{equation}\label{explicitvarphi}
	\frac{\alpha_i\alpha_j}{\delta} = c_{ij}^{(0)} +
        c_{ij}^{(1)} \theta + \hdots + c_{ij}^{(n-3)} \theta^{n-3} +
        b_{ij} \zeta_{n-2} + a_{ij} \zeta_{n-1}
	\end{equation}
where $a_{ij},b_{ij},c_{ij}^{(k)} \in \bZ$ for $0 \leq k \leq
n-3$. Then $(A,B) = ((a_{ij}),(b_{ij}))$ yields the desired pair of
integral symmetric $n \times n$ matrices.

\smallskip

To describe the reverse map, let $(A,B) \in V(\bZ)$ satisfy $\pi(A,B) = f$, and denote the 
coefficients of $A$ as $a_{ij}$ and of $B$ as $b_{ij}$. Note that $\det A
= f_0$, so  requiring $f_0 \neq 0$ is equivalent to
requiring $A$ to be invertible. We want to construct a fractional
ideal $\II$ of $R_f$ along with an element $\delta \in (R_f
\otimes_{\bZ} \bQ)^\times$ such that $I^2 \subset \delta I_f^{n-3}$ and $\Norm(I)^2 = \Norm(\delta)\Norm(I_f^{n-3})$. Theorem 5.7 of \cite{melanie-2nn} implies that it is equivalent to give a $\bZ$-basis $\langle \alpha_1, \dots, \alpha_n \rangle$ for $\II$ and a map of $R_f$-modules $\varphi: I \otimes_{R_f} I \rightarrow I_f^{n-3}$ such that the composition
	\begin{equation}\label{composition}I \otimes_{\bZ} I \rightarrow I \otimes_{R_f} I \rightarrow I_f^{n-3} \rightarrow \If^{n-3}/\langle 1,\theta,\dots,\theta^{n-3}\rangle\end{equation}
is equal to $(A,B)$ when written in terms of $\langle \alpha_1, \dots, \alpha_n \rangle$. Indeed, independent of the choice of $i$ and $j$ in $\{1, \dots, n\}$, we have the equality $\delta = \frac{\alpha_i\alpha_j}{\varphi(\alpha_i\otimes\alpha_j)}$. (This is due to the fact that any map $I \otimes_{R_f} I \rightarrow I_f^{n-3}$ factors through an injective map $I^2 \rightarrow I_f^{n-3}$, which must be multiplication by an invertible element of $R_f \otimes_{\bZ} \bQ$.) Thus, we would like to describe $I$ in terms of the $\bZ$-basis $\langle \alpha_1, \dots, \alpha_n\rangle$ and construct the map $\varphi$. 

If the composition of maps in \eqref{composition} corresponds to $(A,B)$ relative to a $\bZ$-basis $\langle \alpha_1, \dots,\alpha_n\rangle$, then the map $I\otimes_{\bZ} I \rightarrow I \otimes_{R_f} I \rightarrow I_f^{n-3}$ can be described on elements of the $\bZ$-basis $\langle \alpha_i \otimes \alpha_j \rangle$ of $I\otimes I$ as
\begin{equation}\begin{array}{rcl} \label{eqalphabydelta}
  \displaystyle\varphi(\alpha_i \otimes\alpha_j) & = &
  \displaystyle\sum_{k=0}^{n-3}c_{ij}^{(k)}\theta^k+b_{ij} \zeta_{n-2} + a_{ij} \zeta_{n-1}
\\[.1in] 
&=&\displaystyle
c_{ij}^{(0)}+\sum_{k=1}^{n-3}(c_{ij}^{(k)}+b_{ij}f_{n-k-2}+a_{ij}f_{n-k-1})\theta^k \\
& & \; + \; (b_{ij}f_{0}+a_{ij}f_{1})\theta^{n-2}+a_{ij}f_0\theta^{n-1},
\end{array}
\end{equation}
where $c_{ij}^{(k)}$ are integers for $k \in \{0,\dots,n-3\}$ and $1 \leq i,j\leq n$. Thus, the coefficients $c_{ij}^{(k)}$ must satisfy
	$$c_{ij}^{(k)} = \begin{cases} \check{\theta}_k\left(\varphi(\alpha_i\otimes\alpha_j)\right) - f_{n-k-2} \cdot b_{ij} - f_{n-k-1} \cdot
a_{ij} & \mbox{ if $1 \leq k \leq n-3$} \\
\check{\theta}_k\left(\varphi(\alpha_i\otimes\alpha_j)\right) & \mbox{ if $k = 0$}\end{cases}.$$
Using equation (\ref{melanie}), we then have that $c_{ij}^{(k)}$ for $k > 0$ must satisfy
	\begin{eqnarray*}
	c_{ij}^{(k)} &=& f_{n-k-1}
        \check{\zeta}_{n-1}\left(\varphi(\alpha_i \otimes
          \alpha_j)\right) +
        \check{\zeta}_{n-1}\left(\zeta_{n-k-1} \cdot \varphi(\alpha_i \otimes
          \alpha_j)\right) - f_{n-k-2} \cdot b_{ij} -
        f_{n-k-1} \cdot a_{ij} \\ &=&
        \check{\zeta}_{n-1}\left(\zeta_{n-k-1} \cdot \varphi(\alpha_i \otimes
          \alpha_j)\right) - f_{n-k-2} \cdot
        b_{ij}\\ &=&\check{\zeta}_{n-1}\left((f_0
          \theta^{n-k-1} + f_1 \theta^{n-k-2} + \hdots +f_{n-k-2}
          \theta)\cdot\varphi(\alpha_i\otimes\alpha_j)\right) -
        f_{n-k-2} \cdot b_{ij}. \end{eqnarray*}
        The middle equality
        follows from the fact that
        $\check{\zeta}_{n-1}\left(\varphi(\alpha_i\otimes\alpha_j)\right) = a_{ij}$ by equation
        (\ref{eqalphabydelta}).
By \cite[Proposition 3.3]{melanie-2nn}, if we write an element $\alpha$ of $\II$ as a
row vector $(a_1,a_2,\dots,a_n)$ relative to the $\bZ$-basis $\langle \alpha_1, \dots, \alpha_n
\rangle$ corresponding to $(A,B)$, then $\theta \in B_{f_0}[\theta]/f(\theta,1)$ must act on $\II$ by right multiplication by
$BA^{-1}$, i.e.,
	$$\theta \cdot \alpha =  (a_1, a_2, \dots, a_n ) \cdot BA^{-1}.$$ 
Thus, if we create $n-2$ matrices $C^{(k)}$ such that its $(i,j)^{\text{th}}$ entry is equal to $c_{ij}^{(k)}$, then we have for $k > 0$,
\begin{eqnarray}\label{eqCk}
	C^{(k)} &=& (f_0\cdot(BA^{-1})^{n-k-1} + f_1 \cdot
        (BA^{-1})^{n-k-2} + \hdots + f_{n-k-2} \cdot BA^{-1})A - 
        f_{n-k-2}B \nonumber \\
        &=&(f_0 \cdot (BA^{-1})^{n-k-2} + f_1 \cdot
(BA^{-1})^{n-k-3} + \hdots + f_{n-k-3} \cdot BA^{-1})B.
\end{eqnarray}
Additionally,
\begin{eqnarray*}\label{eqC0}
	C^{(0)} &=& (f_0\cdot(BA^{-1})^{n-1} + f_1 \cdot
        (BA^{-1})^{n-2} + \hdots + f_{n-2} \cdot BA^{-1}  
       + f_{n-1})A.
       \end{eqnarray*}	
Furthermore, since the action of $\theta$ gives the action of $R_f$ on $\II$, this completely determines the map $\varphi$ and $I$ as an $R_f$-module. By Propositions 5.1 and 5.4 of \cite{melanie-2nn}, this implies that $\II$ can be realized as a fractional ideal, and thus there is a well-defined element of $(R_f \otimes_{\bZ} \bQ)^\times$ satisfying
	$$\delta = \frac{\alpha_i \alpha_j}{\varphi(\alpha_i \otimes \alpha_j)},$$ independent of the choice of $i$ and $j$. Additionally, for each $1
\leq j \leq n$, we have that the $\alpha_i$ satisfy the following ratios:
	\begin{align*}
	\alpha_1 : \alpha_2 : \hdots : \alpha_{n-1} : \alpha_n 
	= & \ c_{1,j}^{(0)} + \dots + c_{1,j}^{(n-3)} \theta^{n-3} 
	+ b_{1,j} \zeta_{n-2} + a_{1,j} \zeta_{n-1} \ : \\
	&\ c_{2,j}^{(0)} + \dots +
        c_{2,j}^{(n-3)} \theta^{n-3} + b_{2,j} \zeta_{n-2} + a_{2,j}
        \zeta_{n-1} : \hdots : \\
    &\ c_{n-1,j}^{(0)} + \dots + 
        c_{n-1,j}^{(n-3)} \theta^{n-3} + b_{n-1,j} \zeta_{n-2} +
        a_{n-1,j} \zeta_{n-1} \ : \\
    &\ c_{n,j}^{(0)} + \dots + 
        c_{n,j}^{(n-3)} \theta^{n-3} + b_{n,j} \zeta_{n-2} + a_{n,j}
        \zeta_{n-1}.
	\end{align*}
The ratios must be independent of the choice of $j$, so this in conjunction with $\delta$ determines
$\langle \alpha_1, \alpha_2, \hdots, \alpha_n \rangle$.
The action of $\SL_n(\bZ)$ on $V(\bZ)$ corresponds to the action $g_n
\in \SL_n(\bZ)$ on the chosen basis for $\II$ which sends
	\begin{equation}\label{SLaction}
	\langle \alpha_1, \alpha_2, \hdots, \alpha_n \rangle \mapsto
        \langle \alpha_1, \alpha_2,\hdots,\alpha_n\rangle \cdot g_n^t.
	\end{equation}
Thus, the ideal $\II$ is invariant under the action of $\SL_n(\bZ)$.

\subsection{Composition of elements of $V(\bZ)$ with the same binary $n$-ic invariant} \label{sec:composition}

Let $\cO$ be an $S_n$-order, i.e. an order in a degree $n$ $S_n$-number field $K$ over $\Q$. Consider the
set of pairs $(\II,\delta)$, where $\II$ is a fractional ideal of $\cO$,
$\delta\in K^\times$, $\II^2\subset (\delta)$, and
$N(\II)^2=N(\delta)$. Recall that we called two such pairs $(\II,\delta)$
and $(\II',\delta')$ {\em equivalent} if there exists $\kappa\in
K^\times$ such that $\II'=\kappa \II$ and $\delta'=\kappa^2\delta$. We
have a natural law of composition on equivalence classes of such pairs
given by
\begin{equation}\label{eqcomplaw}
(\II,\delta)\circ (\II',\delta')=(\II\II',\delta\delta').
\end{equation}
We say that a pair $(\II,\delta)$ is {\em projective} if $\II$ is
projective as an $\cO$-module, i.e., if $\II$ is invertible as a
fractional ideal of $\cO$; the pair $(\II,\delta)$ is projective if and
only if $\II^2=(\delta)$. The set of equivalence classes of projective pairs $(\II,\delta)$ for
$\cO$ forms a group under the composition law \eqref{eqcomplaw}, which
we denote by $\rmH(\cO)$.

There exists a natural group homomorphism from $\rmH(\cO)$ to
$\Cl_2(\cO)$, given by sending the pair $(\II,\delta)$ to the ideal
class of $\II$. This map is clearly well defined and surjective. The
kernel consists of equivalence classes of pairs $(\II,\delta)$ where $\II$
is a principal ideal; each such equivalence class has
a representative of the form $(\cO,\delta)$ where $\delta$ is a
norm $1$ unit. Therefore, we obtain the exact sequence
\begin{equation}\label{eqexactseqcl}
1\to\frac{\cO^\times_{N=1}}{({\cO^\times})^2}\to \rm\rmH(\cO) \to \Cl_2(\cO) \to 1,
\end{equation}
which implies that $\rmH(\cO)$ is an extension of the $2$-torsion subgroup of the class
group of $\cO$. Using Dirichlet's unit theorem and the fact that
$-1\in \cO^\times$ has norm $-1$, we immediately obtain the following lemma:
\begin{lemma}\label{lemsizeH}
Let $\cO$ be an order in an $S_n$-number field of degree $n$ and
signature $(r_1,r_2)$. Then $|\rmH(\cO)|=2^{r_1+r_2-1}|\Cl_2(\cO)|$.
\end{lemma}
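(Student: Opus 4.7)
The plan is to extract the size bound directly from the exact sequence \eqref{eqexactseqcl}: by multiplicativity of orders in a short exact sequence of finite abelian groups, the lemma reduces immediately to showing $|\cO^\times_{N=1}/(\cO^\times)^2| = 2^{r_1+r_2-1}$. Both the class group and the unit-mod-squares quotient are finite, so this reduction is genuine.

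First, I would compute $|\cO^\times/(\cO^\times)^2|$ via Dirichlet's unit theorem for orders, which gives $\cO^\times \cong \mu(\cO) \times \bZ^{r_1+r_2-1}$, where $\mu(\cO)$ is the group of roots of unity contained in $\cO$. The fact that $n$ is odd forces $\mu(\cO) = \{\pm 1\}$: a primitive $m$-th root of unity in $K = \Frac(\cO)$ with $m \geq 3$ would produce a subfield $\bQ(\zeta_m) \subseteq K$ whose degree $\varphi(m)$ divides $n$, but $\varphi(m)$ is even for every $m \geq 3$, contradicting the oddness of $n$. Hence $\cO^\times/(\cO^\times)^2 \cong (\bZ/2\bZ)^{r_1+r_2}$ has order $2^{r_1+r_2}$.

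Next, I would use the norm to halve this count. Since $N(u^2) = N(u)^2 = 1$ for every $u \in \cO^\times$, the norm descends to a homomorphism $\overline{N}\colon \cO^\times/(\cO^\times)^2 \to \{\pm 1\}$ whose kernel is precisely $\cO^\times_{N=1}/(\cO^\times)^2$. Because $n$ is odd, $N(-1) = (-1)^n = -1$, so $\overline{N}$ is surjective, and its kernel has index $2$ and size $2^{r_1+r_2-1}$. Substituting this into \eqref{eqexactseqcl} gives $|\rmH(\cO)| = 2^{r_1+r_2-1}|\Cl_2(\cO)|$, as desired.

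The only real subtlety is the identification $\mu(\cO) = \{\pm 1\}$, but the parity argument above disposes of it directly; the $S_n$-hypothesis in the lemma statement is not even needed for this particular step. After that, the proof is a routine two-line counting argument with no additional obstacles.
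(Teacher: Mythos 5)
Your proof is correct and follows essentially the same route as the paper, which derives the lemma from the exact sequence \eqref{eqexactseqcl} together with Dirichlet's unit theorem and the observation that $N(-1)=(-1)^n=-1$ for odd $n$. You have simply filled in the details (in particular the identification $\mu(\cO)=\{\pm1\}$ via the parity of $\varphi(m)$) that the paper leaves implicit in its one-line justification.
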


We next compare certain elements of $\rmH(\cO)$ to the $2$-torsion subgroup $\Cl_2^+(\cO)$
of the {\em narrow} class group $\Cl^+(\cO)$ of $\cO$. Recall that $\Cl^+(\cO)$ is 
the quotient of the ideal group $\cI(\cO)$ of $\cO$ by the group
$P^+(\cO)$ of {\em totally positive} principal fractional ideals of $\cO$,
i.e., ideals of the form $a\cO$ where $a$ is an element of
$\Frac(\cO)^\times$ such that $\sigma(a)$ is positive for every
embedding $\sigma: \Frac(\cO) \rightarrow \bR$. We say that such an
element $a$ is {\em totally positive} and denote this condition by $a \gg 0$.

\begin{lemma} \label{lem:sizeHplus}
Let $\cO$ be an order in a degree $n$ $S_n$-number field with signature $(r_1,r_2)$. If $\rmH^+(\cO)$ denotes
the subgroup of $\rmH(\cO)$ consisting of projective pairs
$(\II,\delta)$ such that $\delta \gg 0$, then
	\begin{equation} \label{eq:HCl}
	|\rmH^+(\cO)| = 2^{r_2} |\Cl_2^+(\cO)|.
	\end{equation}
\end{lemma}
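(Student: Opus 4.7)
The plan is to imitate the proof of Lemma~\ref{lemsizeH}, with extra care because the ``narrow analog'' of its defining map is not well-defined. Specifically, the naive assignment $[(\II,\delta)]\mapsto[\II]_+\in\Cl_2^+(\cO)$ fails: the equivalence $(\II,\delta)\sim(\kappa\II,\kappa^2\delta)$ with $\kappa\in K^\times$ preserves the condition $\delta\gg 0$ automatically (since $\kappa^2\gg 0$), whereas it shifts the narrow class of $\II$ by $[(\kappa)]_+\in P(\cO)/P^+(\cO)$, which is generally nontrivial.

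To work around this, I would instead define $\phi:\rmH^+(\cO)\to\Cl_2(\cO)$ by $[(\II,\delta)]\mapsto[\II]$, which is well-defined (principal ideals being trivial in $\Cl$) and lands in $\Cl_2$ because $[\II]^2=[(\delta)]=1$. A class $[\II]\in\Cl_2$ lies in $\im(\phi)$ iff $\II^2$ admits a totally positive generator, and this is exactly the condition for $[\II]$ to lift to $\Cl_2^+(\cO)$: the square of any lift of $[\II]$ to $\Cl^+$ equals $[(\delta_0)]_+\in P/P^+$, independent of the lift (as principal squares are totally positive). Hence $\im(\phi)=\im(\Cl_2^+\to\Cl_2)$. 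Since $P/P^+\cong\{\pm 1\}^{r_1}/S$ (with $S:=\sgn(\cO^\times)$) is contained in $\Cl_2^+$ as the kernel of $\Cl_2^+\to\Cl_2$, this gives $|\im(\phi)|=|\Cl_2^+(\cO)|/|P/P^+|$.

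For the fiber of $\phi$, I would fix a representative $(\II,\delta_0)$ with $\delta_0\gg 0$. Any other pre-image is equivalent to some $(\II,u\delta_0)$ with $u\in\cO^\times$; the constraints $u\delta_0\gg 0$ and $N(u\delta_0)=N(\II)^2$ reduce to $u\gg 0$ (from which $N(u)=1$ is automatic). The remaining equivalence identifies $u\sim v^2 u$ for $v\in\cO^\times$, so every fiber has cardinality $|\cO^\times_{\gg 0}/(\cO^\times)^2|$. Combining,
\[
|\rmH^+(\cO)|=|\im(\phi)|\cdot|\cO^\times_{\gg 0}/(\cO^\times)^2|=\frac{|\cO^\times_{\gg 0}/(\cO^\times)^2|}{|P/P^+|}\cdot|\Cl_2^+(\cO)|.
\]

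Finally I would evaluate the ratio uniformly via the four-term exact sequence
\[
1\to\cO^\times_{\gg 0}/(\cO^\times)^2\to\cO^\times/(\cO^\times)^2\xrightarrow{\sgn}\{\pm 1\}^{r_1}\to P/P^+\to 1,
\]
whose alternating-product identity gives $|\cO^\times_{\gg 0}/(\cO^\times)^2|/|P/P^+|=|\cO^\times/(\cO^\times)^2|/2^{r_1}$. Because $n$ is odd, the torsion of $\cO^\times$ is $\{\pm 1\}$, and Dirichlet's unit theorem yields $|\cO^\times/(\cO^\times)^2|=2^{r_1+r_2}$, so the ratio is $2^{r_2}$, completing the proof. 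The main obstacle is the non-well-definedness of the naive map into $\Cl_2^+$; the factors of $|S|$ appearing in $|\im(\phi)|$ and in the fiber size only cancel simultaneously at the end, via the four-term exact sequence, which is the reason the final answer depends only on $r_2$.
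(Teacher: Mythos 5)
Your proof is correct and follows essentially the same route as the paper's: both arguments fiber $\rmH^+(\cO)$ and $\Cl_2^+(\cO)$ over the common set of ideal classes whose square admits a totally positive generator (the paper's $\cC_2^{\gg0}(\cO)$, your $\im(\phi)$), with respective fiber sizes $|\cO^\times_{\gg0}/(\cO^\times)^2|$ and $|P/P^+|=|\{\pm1\}^{r_1}/\sgn(\cO^\times)|$. The only (cosmetic) difference is that you extract the ratio $2^{r_2}$ from the alternating product of the four-term exact sequence, whereas the paper computes the two kernel orders separately in terms of $r=\log_2|\sgn(\cO^\times)|$ and then cancels.
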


\begin{proof}
Let $\cO^{\times}_{\gg0}$ denote the totally positive units of $\cO$, and define $\sgn: \cO^\times \rightarrow \{\pm 1\}^{r_1}$ as the signature homomorphism, which takes a unit to the sign of its image under each real embedding $\sigma: \Frac{\cO} \rightarrow \bR$. Let $r$ be the nonnegative integer satisfying $|\mathrm{Image}(\sgn)| = 2^r$ and let
	$$\cC_2^{\gg0}(\cO) = \{[\II] : \textrm{there exists } \delta \gg 0 \textrm{ such that } \II^2 = (\delta)\}$$ 
be the set of equivalence class of ideals whose square is totally positive, where two ideals are equivalent if they differ by a principal ideal (in the usual sense).
We then have the following commutative diagram of exact sequences:
\[
\begin{tikzcd}[column sep=1.5pc, row sep = 2.5pc]
1 \arrow{rd}{} &  & &&  & & 1 \arrow{ld}{}  \\
1 \arrow{r}{} & \cO^\times_{\gg0}/(\cO^\times)^2 \arrow{r}{} \arrow{rd}{} &  \cO^\times/(\cO^\times)^2 \arrow{rr}{\sgn} & & \{\pm 1\}^{r_1}  \arrow{r}{} & \{\pm 1\}^{r_1}/\sgn(\cO^\times) \arrow{r}{} \arrow{ld}{} &1 \\
& & \rmH^+(\cO) \arrow{rd}[swap]{\alpha} & & \Cl_2^+(\cO) \arrow{ld}{\beta} & & \\ 
& & & \cC_2^{\gg0}(\cO) \arrow{ld}{} \arrow{rd}{} &  & & \\
&  & 1 & & 1 & &
\end{tikzcd}
\]
where the map $\alpha$ sends a pair $(\II,\delta)$ with $\delta \gg 0$ to the equivalence class $[\II]$, and the map $\beta$ sends a coset $\II + P^+(\cO)$ to the equivalence class $[\II]$.
We have that $|\cO^\times/(\cO^\times)^2| = 2^{r_1+r_2}$ and $|\{\pm 1\}^{r_1}/\sgn(\cO^\times)| = 2^{r_1 - r}$, so $|\cO^\times_{\gg0}/(\cO^\times)^2| =2^{r_1-r+r_2}$. The equality \eqref{eq:HCl} follows immediately.
\end{proof}

We now relate {\em projective} orbits of $V(\Z)$ to the size of the
$2$-torsion subgroup of the ideal class group of the corresponding
rings.
We say that
a pair $(A,B) \in V(\Z) \cap \pi^{-1}(f)$ is {\em projective} if the
corresponding pair $(\II,\delta)$ under the bijection of
Theorem~\ref{thm:paramZ} is projective. We then have the following
result:

\begin{prop}\label{prophr}
Let $\cO$ be an $S_n$-order corresponding to an integral,
nondegenerate, irreducible, and primitive binary $n$-ic form $f$. Then
$\rmH(\cO)$ is in natural bijection with the set of projective
$\SL_n(\Z)$-orbits on $V(\Z)\cap\pi^{-1}(f)$.  The number of such
projective orbits is equal to
$$2^{r_1+r_2-1}\ |\Cl_2(\cO)|,$$ where $(r_1,r_2)$ is the signature of the
fraction field of $\cO$.
\end{prop}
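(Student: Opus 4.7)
The plan is to obtain the bijection by restricting the parametrization of Theorem \ref{thm:paramZ} to projective elements on both sides, and then to extract the numerical count directly from Lemma \ref{lemsizeH}.

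Since $f$ is nondegenerate and primitive, Theorem \ref{thm:paramZ} provides a bijection between $\SL_n(\Z)$-orbits on $V(\Z)\cap\pi^{-1}(f)$ and equivalence classes of pairs $(\II,\delta)$, where $\II$ is a fractional ideal of $\cO = R_f$ and $\delta \in (\cO \otimes_\Z \Q)^\times$ satisfies both $\II^2 \subset \delta \If^{n-3}$ and $\Norm(\II)^2 = \Norm(\delta) \Norm(\If^{n-3})$. By the definition of projectivity of a pair $(A,B) \in V(\Z)\cap\pi^{-1}(f)$ stated just before the proposition, this bijection restricts to a bijection between projective $\SL_n(\Z)$-orbits and equivalence classes of projective pairs $(\II,\delta)$, i.e., those satisfying $\II^2 = (\delta)$. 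By the construction in \S \ref{sec:composition}, the set of such equivalence classes is precisely the underlying set of $\rmH(\cO)$, which establishes the first claim.

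For the numerical assertion, I would apply Lemma \ref{lemsizeH} directly: since $\cO$ is an $S_n$-order with signature $(r_1,r_2)$, it gives $|\rmH(\cO)| = 2^{r_1+r_2-1}|\Cl_2(\cO)|$. Combined with the bijection above, this yields the claimed count of projective $\SL_n(\Z)$-orbits.

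There is no serious obstacle here; the proposition is a packaging of Theorem \ref{thm:paramZ} and Lemma \ref{lemsizeH}, where the latter ultimately rests on the exact sequence \eqref{eqexactseqcl} together with the Dirichlet unit theorem and the existence of a norm $-1$ unit (namely $-1$, using that $n$ is odd). The only minor point to verify is that the hypotheses on $f$ match the hypotheses needed downstream: nondegeneracy and primitivity are exactly what Theorem \ref{thm:paramZ} requires, while irreducibility guarantees that $\cO = R_f$ is an order in the number field $\bQ[\theta]/f(\theta,1)$ by \S \ref{rings}, so that Lemma \ref{lemsizeH} applies with the stated signature. Projectivity is preserved in both directions of the bijection essentially by definition, so no additional verification is required.
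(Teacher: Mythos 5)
Your overall route is the same as the paper's: restrict Theorem \ref{thm:paramZ} to projective elements and then invoke Lemma \ref{lemsizeH}. But there is one genuine (if small) gap in the middle step. The pairs $(\II,\delta)$ produced by Theorem \ref{thm:paramZ} satisfy $\II^2 \subset \delta \If^{n-3}$ and $\Norm(\II)^2 = \Norm(\delta)\Norm(\If^{n-3})$, so for these pairs projectivity (invertibility of $\II$) is equivalent to $\II^2 = \delta \If^{n-3}$, \emph{not} to $\II^2 = (\delta)$ as you assert. The characterization ``projective iff $\II^2=(\delta)$'' from \S\ref{sec:composition} applies only to the untwisted pairs with $\II^2 \subset (\delta)$ and $\Norm(\II)^2 = \Norm(\delta)$, which are the ones that actually constitute $\rmH(\cO)$. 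Consequently your claim that the projective classes coming out of the parametrization ``are precisely the underlying set of $\rmH(\cO)$'' is not literally true: one must first pass from $(\II,\delta)$ to $\bigl(\II\cdot\If^{-\frac{n-3}{2}},\delta\bigr)$, which is exactly the bijection the paper writes down. This twist is the only real content of the first assertion, and it is also where the hypotheses earn their keep: primitivity of $f$ makes $\If$ invertible so that $\If^{-\frac{n-3}{2}}$ exists as a fractional ideal, and oddness of $n$ makes $\frac{n-3}{2}$ an integer so that the twist lands back in honest pairs for $\cO$. One should also check that the twist respects equivalence and carries the condition $\II^2 = \delta\If^{n-3}$ to $\II^2=(\delta)$, both of which are immediate. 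Your treatment of the numerical assertion via Lemma \ref{lemsizeH} is correct and identical to the paper's.
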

\begin{proof}
From Theorem \ref{thm:paramZ}, projective orbits in $V(\Z)\cap\pi^{-1}(f)$
are in bijection with pairs $(\II,\delta)$, where $\II$ is a fractional
ideal of $\cO$, $\delta\in K^\times$ and $\II^2=\delta \If^{n-3}$. The set
of such pairs is clearly in bijection with $\rmH(\cO)$ by simply sending
$(\II,\delta)$ to $\Bigl( \II\cdot\If^{-\frac{n-3}{2}},\delta \Bigr)$. The second assertion
of the proposition now follows immediately from Lemma \ref{lemsizeH}.
\end{proof}

\subsection{Reducible elements in $V(\bZ)$} \label{sec:reducible}

We say that an element $(A,B)\in V(\Q)$ is {\it reducible} if the
quadrics in $\P^{n-1}(\Q)$ corresponding to $A$ and $B$ have a common rational
isotropic subspace of
dimension $(n-1)/2$ in $\P^{n-1}(\Q)$. The condition of reducibility
has the following arithmetic significance:

\begin{thm}\label{threducible}
Let $(A,B)$ be a projective element of $V(\bZ)$ whose binary $n$-ic
invariant is primitive, irreducible, and nondegenerate, and let
$(\II,\delta)$ denote the corresponding pair as given by Theorem
\ref{thm:paramZ}. Then $(A,B)$ is reducible if and only if $\delta$ is a
square in $(R_f \otimes_\Z \bQ)^\times$.
\end{thm}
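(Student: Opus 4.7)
I will prove each direction separately.

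For $(\Leftarrow)$, suppose $\delta = \kappa^2$ for some $\kappa \in K^\times := (R_f \otimes_{\bZ} \bQ)^\times$. Applying the equivalence $(\II, \delta) \sim (\kappa^{-1}\II, 1)$ from Theorem \ref{thm:paramZ}, reduce to the case $\delta = 1$; projectivity then gives $\II^2 = \If^{n-3}$. The pair $(A, B)$ arises from the multiplication map $\II \otimes \II \to \If / \langle 1, \theta, \ldots, \theta^{n-3}\rangle$ described in \S\ref{par}. Taking the $\bQ$-subspace $L := \langle 1, \theta, \ldots, \theta^{(n-3)/2}\rangle_{\bQ}$ of $K = \II \otimes_{\bZ} \bQ$, one checks directly that $L \cdot L \subset \langle 1, \theta, \ldots, \theta^{n-3}\rangle_{\bQ} =: W$, so $L$ is a common isotropic subspace of $\bQ$-dimension $(n-1)/2$ for $A$ and $B$, and $(A,B)$ is reducible.

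For $(\Rightarrow)$, I would use Galois descent on the Fano scheme of isotropic subspaces of the pencil $\{Ax - By\}$. First, observe that over $\bQ$ the parametrization of Theorem \ref{thm:paramZ} simplifies---every fractional ideal of the field $K$ is principal---to a bijection between $\SL_n(\bQ)$-orbits of projective elements of $V(\bQ)$ with irreducible invariant $f$ and classes $[\delta] \in K^\times/(K^\times)^2$. Since reducibility is $\SL_n(\bQ)$-invariant, the reducible orbits correspond to a subset $R \subset K^\times/(K^\times)^2$; the forward direction shows $1 \in R$, so it remains to prove $R = \{1\}$. Over $\overline{\bQ}$, the common isotropic $(n-1)/2$-dimensional subspaces of the pencil form a principal homogeneous space $F$ for $J[2]$, where $J = \Jac(C)$ for the hyperelliptic curve $C : y^2 = f(x, 1)$ of genus $(n-1)/2$. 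Reducibility over $\bQ$ is equivalent to $F(\bQ) \neq \emptyset$, i.e., to the vanishing of $[F] \in H^1(\bQ, J[2])$. The classical Kummer-theoretic description of $J[2]$ as a $\Gal(\overline{\bQ}/\bQ)$-module in terms of the roots of $f$ (using that $C$ has a rational Weierstrass point at infinity since $n$ is odd) then yields an injection $H^1(\bQ, J[2]) \hookrightarrow K^\times/(K^\times)^2$ under which $[F] \mapsto [\delta]$, giving $[F] = 0 \iff [\delta] = 1$.

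The main obstacle is the explicit identification $[F] \mapsto [\delta]$, which requires reconciling Wood's $\SL_n(\bZ)$-parametrization of pairs of $n$-ary quadratic forms with the classical $2$-descent on hyperelliptic Jacobians. As a more elementary alternative I would try: apply a dimension inequality for products of $\bQ$-subspaces in the field extension $K/\bQ$ (Hou--Leung--Xiang, valid when $K/\bQ$ is primitive, e.g., when the Galois group of $f$ is $S_n$) to strengthen $L \cdot L \subset \delta W$ to the equality $L^2 = \delta W$; then exploit the rank-one identities $w_{ij} w_{k\ell} = w_{ik} w_{j\ell}$ (in $K$) among $w_{ij} := \ell_i \ell_j / \delta \in W$ for a basis $\{\ell_i\}$ of $L$. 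Combined with the norm relation $N(\II)^2 = N(\delta) N(\If^{n-3})$ from Theorem \ref{thm:paramZ}---which forces $N(\delta) \in \bQ^{\times 2}$ since $n-3$ is even---and using that $n$ is odd, the goal would be to factor some $w_{11} = \ell_1^2/\delta$ as $r \alpha^2$ with $r \in \bQ^\times$ and $\alpha \in K^\times$, then deduce $r \in \bQ^{\times 2}$ and hence $\delta = (\ell_1/(\alpha\sqrt{r}))^2 \in (K^\times)^2$.
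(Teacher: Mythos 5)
Your forward direction ($\delta$ a square $\Rightarrow$ reducible) is correct and is essentially the paper's own argument: after normalizing to $\delta=1$, the span of $1,\theta,\dots,\theta^{(n-3)/2}$ inside $\II\otimes\Q$ multiplies into $\langle 1,\theta,\dots,\theta^{n-3}\rangle$, which is exactly the subspace killed by the quotient map defining $(A,B)$, so it is a common isotropic subspace of the required dimension.

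The converse is where all the work lies, and neither of your two strategies is actually carried out. The cohomological route is viable in principle --- the authors themselves remark that the theorem follows from Wang's interpretation of $V(\Q)$-orbits via hyperelliptic Jacobians --- but the step you flag as ``the main obstacle,'' namely showing that the class of the torsor $F$ of common isotropic $(n-1)/2$-planes maps to $[\delta]$ under $H^1(\Q,J[2])\hookrightarrow K^\times/(K^\times)^2$, is precisely the entire content of the implication; without it you have only shown that reducibility is detected by \emph{some} class in $K^\times/(K^\times)^2$, not that this class is $[\delta]$. Your ``elementary alternative'' is circular as stated: the desired factorization $w_{11}=\ell_1^2/\delta=r\alpha^2$ with $r\in\Q^\times$ is equivalent to $\delta\in\Q^\times\cdot(K^\times)^2$, which (after your norm argument, which is fine and matches what the parity of $n$ buys in the paper) is exactly the statement to be proved; the Hou--Leung--Xiang spanning inequality and the identities $w_{ij}w_{k\ell}=w_{ik}w_{j\ell}$ supply no mechanism for producing $\alpha$. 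For comparison, the paper's proof of this direction is a concrete computation: after moving the isotropic subspace to coordinate position so that $a_{ij}=b_{ij}=0$ for $i,j\le(n-1)/2$, it writes $\alpha_1^2/\delta$ as the $11$-entry $d_{11}(\theta)$ of an explicit matrix polynomial, shows $d_{11}(m)$ is a square for every integer $m$ by applying the constant-term computation (a cofactor-matrix identity giving the $11$-entry of $-f_nAB^{-1}A$) to the translated pair $(A,B-mA)$, and concludes that the polynomial $d_{11}$, hence $\delta$, is a square. Some such explicit input is what your sketch is missing.
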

\begin{proof}
Suppose first that $\delta=r^2$ is the square of an invertible element
in $(R_f\otimes\Q)^\times$. By replacing $I$ with $r^{-1}I$ and
$\delta$ with $r^{-2}\delta$, we may assume that $\delta=1$. Let
$\alpha_1,\ldots\alpha_{\frac{n-1}2}$ be a $\Z$-basis for $I \cap
(\Z\oplus\Z\theta\oplus\cdots\oplus\Z\theta^{\frac{n-3}2})$, and extend
it to a basis $\alpha_1,\ldots,\alpha_n$ of $I$. It follows from
\eqref{explicitvarphi} that, with these coordinates, we have
$a_{ij}=b_{ij}=0$ for $1\leq i,j \leq (n-1)/2$, which is sufficient
for $(A,B)$ to be reducible.

Now assume that $(A,B)$ is reducible; we would like to prove that $\delta$ is a
square. Let $x_1,x_2, \ldots, x_n$ denote a set of coordinates for
$\P^{n-1}$. By replacing $(A,B)$ with an $\SL_n(\Q)$-translate if
necessary, we may assume that the common isotropic subspace is the one
generated by $x_1,\ldots,x_{(n-1)/2}$. This implies that
$a_{ij}=b_{ij}=0$ for $1\leq i,j\leq (n-1)/2$.
From
\eqref{eqalphabydelta} and \eqref{eqCk}, we see that the quantity $\alpha_i\alpha_j/\delta$ is given by
the $ij$th coordinate of the matrix
\begin{eqnarray}
D &:=& C^{(0)} + \displaystyle\Bigl(\sum_{k=1}^{n-3}(C^{(k)}+f_{n-k-2}B+f_{n-k-1}A)\cdot\theta^k\Bigr) + (f_0B + f_1 A)\cdot \theta^{n-2} + f_0 A \cdot \theta^{n-1} \nonumber\\
&=&\displaystyle\sum_{k=0}^{n-1}\Bigl(\sum_{j=0}^{n-k-1}f_{n-k-j-1}(BA^{-1})^j\Bigr)
A\cdot\theta^k \nonumber \\
&=&\displaystyle\sum_{j,k\geq 0}^{j+k\leq n-1}f_{n-j-k-1}(BA^{-1})^jA\cdot\theta^k \label{eq:Dformula}
\end{eqnarray}
where $f = f_0 x^n + f_1 x^{n-1} y + \cdots + f_n y^n$ is the binary $n$-ic invariant of $(A,B)$.
(Note that $A$ is invertible because $f$ is assumed to be irreducible, so $f_0 = \det A \neq 0$.)

We now prove that the $11$-coefficient $d_{11}$ of $D$ is a square using the fact that $a_{ij}=b_{ij}=0$ for $1\leq i,j\leq (n-1)/2$.
This implies that $\delta = \alpha_1^2 / d_{11}$ is a square as well.  First,
from \eqref{eq:Dformula}, note that the coefficients of $\theta^{n-1}$ and $\theta^{n-2}$ of
$d_{11}$ are 0, since $a_{11}=b_{11}=0$. We start with the following
lemma:
\begin{lemma}\label{lemredfirst}
The coefficient of $\theta^{n-3}$ in $d_{11}$ is a square.
\end{lemma}
\begin{proof}
From \eqref{eq:Dformula} and the fact that $a_{11} = b_{11} = 0$,
the coefficient of $\theta^{n-3}$ in $d_{11}$ is equal to the
$11$-coefficient of the matrix $f_0(BA^{-1})^2A=f_0BA^{-1}B$.  Let $M$
denote the cofactor matrix of $A$, i.e., the $ij$-coefficient  $m_{ij}$
of $M$ is equal to $(-1)^{i+j}$ times the determinant of the matrix
obtained by removing the $i$th row and the $j$th column of $A$. Then
the coefficient of $\theta^{n-3}$ in $d_{11}$ is equal to the
$11$-coefficient of $BMB$.

We now describe the coefficients of $M$. Let $A^{{\rm top}}$ denote
the top-right $(n-1)/2,(n+1)/2$ submatrix of $A$. Note that, since $A$
is symmetric, the bottem-left $(n+1)/2,(n-1)/2$ submatrix of $A$ is
simply the transpose of $A^{{\rm top}}$. For $i\in [(n+1)/2,n]$ let
$A_i$ denote the $(n-1)/2,(n-1)/2$ matrix obtained by removing the
$i-(n-1)/2$'th column of $A^{{\rm top}}$. Then removing the
$i-(n-1)/2$'th row of the transpose of $A^{{\rm top}}$ yields $A_i^t$.
Since the top-right $(n-1)/2,(n-1)/2$ block of $A$ is 0, it follows
that for $i,j>(n-1)/2$ we have $m_{ij}=(-1)^{i+j}\Det(A_i)\Det(A_j)$.
Therefore, we have
\begin{equation*}
  \begin{array}{rcl}
    11\mbox{-coefficient of }BMB&=&
    \displaystyle\sum_{i,j=1}^nb_{1i}m_{ij}b_{j1}\\[.2in]
    &=&\displaystyle
    \sum_{i,j=(n+1)/2}^n (-1)^{i+j}b_{1i}b_{1j}\det A_i\det A_j\\[.2in]
    &=&\displaystyle\left(\sum_{k=(n+1)/2}^n (-1)^{k}b_{1k}\det A_k\right)^2,
  \end{array}
\end{equation*}
as necessary.
\end{proof}
Next, we show that the constant coefficient of $d_{11}$ (considered as
a polynomial in $\theta$) is a square.
\begin{lemma}\label{lemredsecond}
The constant coefficient $d_{11}(0)$ of $d_{11}(\theta)$ is a square.
\end{lemma}
\begin{proof}
Because the binary $n$-ic invariant of $(A,B)$ is $f$, we have
$\det(Ax-By)=\det(Ix-BA^{-1}y)\det(A)=f(x,y)$.
Since $BA^{-1}$ satisfies its characteristic polynomial, we obtain
$$
\sum_{j=0}^{n}f_{n-j}(BA^{-1})^j=0.
$$
By \eqref{eq:Dformula}, we compute $d_{11}(0)$ to be the $11$-coefficient of the matrix
\begin{align*}
\displaystyle\Bigl(\sum_{j=0}^{n-1}f_{n-j-1}(BA^{-1})^j\Bigr)A
&=\displaystyle\Bigl(\sum_{j=0}^{n-1}f_{n-(j+1)}(BA^{-1})^{j+1}\Bigr)AB^{-1}A\\
&=\displaystyle\Bigl(\sum_{j=0}^{n}f_{n-j}(BA^{-1})^{j}\Bigr)AB^{-1}A-f_nAB^{-1}A. \\
&= -f_n A B^{-1} A
\end{align*}
Note that $B$ is invertible because $\det B = f_n \neq 0$ since $f$ is
irreducible.  The lemma now follows from the proof of Lemma
\ref{lemredfirst} and symmetry (and the fact that $n$ is odd).
\end{proof}

We next show that $d_{11}(m)$ is a square for every integer $m$,
by applying Lemma \ref{lemredsecond} on the pair $(A,B-mA)$. Let
$g$ denote the binary $n$-ic invariant of the pair $(A,B-mA)$, and let $g_k$
denote the coefficient of $x^{n-k}y^k$ in $g(x,y)$. We have 
$$g(x,y)=\det(Ax-(B-mA)y)=\det(A(x+my)-By)=f(x+my,y).$$
As a consequence, we compute the $g_k$ to be
$$g_k=\sum_{j=0}^k\binom{n-j}{k-j}f_jm^{k-j}.$$ By applying Lemma
\ref{lemredsecond} to $(A,B-mA)$, we see that the $11$-coefficient of
the following matrix is a square:

\begin{align}
&\displaystyle\left(\sum_{j=0}^{n-1}g_{n-j-1}(BA^{-1}-mI)^j\right)A \nonumber \\
=&\displaystyle\left(\sum_{k=0}^{n-1}g_{k}(BA^{-1}-mI)^{n-k-1}\right)A \nonumber \\
=&\displaystyle\left(\sum_{k=0}^{n-1}\biggl(\sum_{j=0}^k\binom{n-j}{k-j}f_jm^{k-j}
\biggr)(BA^{-1}-mI)^{n-k-1}\right)A \nonumber \\
=&\displaystyle\left(\sum_{k=0}^{n-1}\biggl(\sum_{j=0}^k\binom{n-j}{k-j}f_jm^{k-j}
\biggr)
\biggl(\sum_{i=0}^{n-k-1}(-1)^{k+i}\binom{n-k-1}{i}(BA^{-1})^im^{n-k-i-1}\biggr)
\right)A \nonumber \\
=&\displaystyle\left(\sum_{k=0}^{n-1}\sum_{j=0}^k\sum_{i=0}^{n-k-1}
(-1)^{i+k}f_j(BA^{-1})^{i}m^{n-i-j-1}\binom{n-j}{k-j}\binom{n-k-1}{i}\right) A \nonumber \\
=&\displaystyle\left(\sum_{i,j\geq 0}^{i+j\leq n-1}f_j(BA^{-1})^im^{n-i-j-1}
\sum_{k=j}^{n-i-1}(-1)^{k+i}\binom{n-j}{k-j}\binom{n-k-1}{i}\right)A \nonumber \\
=&\displaystyle\Biggl(\sum_{i,j\geq 0}^{i+j\leq n-1}f_j(BA^{-1})^im^{n-i-j-1}\Biggr)A, \label{eq:11square}
\end{align}
where the last equality is a consequence of the following lemma:
\begin{lemma}
For nonnegative integers $n$, $i$, and $j$ satisfying $i+j\leq n-1$, we have
$$
\sum_{k=j}^{n-i-1}(-1)^{k+i}\binom{n-j}{k-j}\binom{n-k-1}{i}=(-1)^{n+1}.
$$
\end{lemma}
\begin{proof}
By taking the $i$th derivative of both sides of the identity
\begin{equation*}
\frac{(1+x)^{n-j}-1}{x}=\sum_{k=j}^{n-1}x^{n-k-1}\binom{n-j}{n-k}
\end{equation*}
and setting $x=-1$, we obtain the lemma.
\end{proof}
Comparing the formulas \eqref{eq:11square} and \eqref{eq:Dformula}
with $\theta = m$ shows that $d_{11}(m)$ is a square for any integer
$m$. It is a classical result that a polynomial that takes only square
values on integers must itself be a square. We include a proof for
completeness.
\begin{lemma}
Suppose $f(x)\in\Z[x]$ takes square values at every integer. Then
$f(x)=g(x)^2$ for some integer polynomial $g(x)$.
\end{lemma}
\begin{proof}
Suppose for the sake of contradiction that $f(x)$ is a nonconstant
squarefree polynomial. Then the resultant $R(f,f')$ of $f$ and its
derivative is a nonzero constant. Choose a prime $p$ such that $p\nmid
R(f,f')$ and such that $p\mid f(n)$ for some integer $n$; such a prime
$p$ exists since there exist infinitely many primes dividing some
value of $f$ applied to integers. We have that $p\mid f(n+p)$ also. By the
assumption that $f$ takes square values, we also have that $p^2$
divides both $f(n)$ and $f(n+p)$. However, because $f(n+p)\equiv
f(n)+pf'(n)\pmod{p^2}$, we find that $p\mid f'(n)$ and thus $p\mid R(f,f')$,
yielding a contradiction.
\end{proof}
Thus it follows that the
$11$-coefficient of $D$ is a square, concluding the proof of
Theorem~\ref{threducible}.
\end{proof}

\begin{remark}
Theorem \ref{threducible} also follows from a different interpretation
of orbits of $V(\Q)$ in terms of Jacobians of hyperelliptic curves,
found in Wang's dissertation \cite{jerry-thesis}.
\end{remark}

For an order $\cO$, let $\cI_2(\cO)$ denote the $2$-torsion subgroup of
the ideal group of $\cO$, i.e., the group of invertible fractional
ideals $I$ of $\cO$ such that $I^2 = \cO$. Note that the group
$\cI_2(\cO)$ is trivial when $\cO$ is maximal. We have the following
result parametrizing elements of $\cI_2(\cO)$ for all primitive orders
$\cO$ arising from integral binary $n$-ic forms.
\begin{prop}\label{reducible}
Let $\cO_f$ be an order corresponding to the integral primitive irreducible and nondegenerate binary
$n$-ic form $f$. Then $\cI_2(\cO_f)$ is in natural bijection with the set
of projective reducible $\SL_n(\Z)$-orbits on $V(\Z)\cap\pi^{-1}(f)$.
\end{prop}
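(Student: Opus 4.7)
The plan is to combine Proposition \ref{prophr} with Theorem \ref{threducible}. Since $f$ is irreducible, $K := R_f \otimes_\Z \Q$ is a field and $\cO_f = R_f$ is an order in $K$. By Proposition \ref{prophr}, the set of projective $\SL_n(\Z)$-orbits on $V(\Z)\cap\pi^{-1}(f)$ is in bijection with $\rmH(\cO_f)$, via the map that sends the pair $(\II,\delta)$ of Theorem \ref{thm:paramZ} (satisfying $\II^2=\delta\If^{n-3}$) to the class of $\bigl(\II\cdot\If^{-(n-3)/2},\delta\bigr)$, which is a pair $(\II',\delta)$ with $(\II')^2=(\delta)$. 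By Theorem \ref{threducible}, an orbit is reducible if and only if the corresponding $\delta$ is a square in $K^\times$. It thus suffices to exhibit a bijection between the subset of $\rmH(\cO_f)$ consisting of classes of $(\II',\delta)$ with $\delta\in(K^\times)^2$ and the group $\cI_2(\cO_f)$.

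Next I would define a map $\Phi$ from reducible classes to $\cI_2(\cO_f)$ by sending $[(\II',\delta)]$ to the fractional ideal $r^{-1}\II'$, where $r\in K^\times$ is any square root of $\delta$. Well-definedness amounts to three one-line checks: (i) the two square roots $\pm r$ give the same fractional ideal $r^{-1}\II'=(-r)^{-1}\II'$; (ii) replacing the representative $(\II',\delta)$ by the equivalent $(\kappa\II',\kappa^2\delta)$ and the square root $r$ by $\kappa r$ yields $(\kappa r)^{-1}(\kappa\II')=r^{-1}\II'$; and (iii) $r^{-1}\II'$ is an invertible fractional ideal, and $(r^{-1}\II')^2 = r^{-2}(\II')^2 = r^{-2}(\delta) = \cO_f$, so $r^{-1}\II'\in\cI_2(\cO_f)$.

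In the reverse direction, I would send $J\in\cI_2(\cO_f)$ to the class of $(J,1)\in\rmH(\cO_f)$, which is projective since $J^2=\cO_f=(1)$ and reducible since $\delta=1$ is a square. Mutual inversion is immediate: $\Phi(J,1)=J$, and conversely, if $(\II',r^2)$ represents a reducible class with $J:=r^{-1}\II'$, then $(\II',r^2)=(rJ,r^2)$ is equivalent to $(J,1)$ via $\kappa=r$. Translating the bijection $\rmH(\cO_f)\leftrightarrow\{\SL_n(\Z)\text{-projective orbits}\}$ of Proposition \ref{prophr} back through $\II'\mapsto\II'\cdot\If^{(n-3)/2}$ completes the identification: the ideal $J\in\cI_2(\cO_f)$ corresponds to the projective reducible orbit associated to the pair $(J\cdot\If^{(n-3)/2},1)$, for which indeed $(J\cdot\If^{(n-3)/2})^2 = \If^{n-3} = 1\cdot\If^{n-3}$.

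I do not anticipate a genuine obstacle: the real content has already been done by Theorem \ref{threducible}, which translates the geometric reducibility condition on $(A,B)$ into the purely algebraic statement that $\delta$ is a square in $K^\times$. Once that translation is in hand, the only content is the observation that, via the normalization $\delta=r^2\mapsto 1$, the reducible piece of $\rmH(\cO_f)$ is precisely the kernel of the natural map $\rmH(\cO_f)\to K^\times/(K^\times)^2$ given by $(\II',\delta)\mapsto \delta$, which is canonically identified with $\cI_2(\cO_f)$.
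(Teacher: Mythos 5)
Your proposal is correct and follows essentially the same route as the paper: invoke Theorem \ref{threducible} to identify reducibility with $\delta$ being a square, then send the orbit of $(\II,\delta)$ with $\delta=r^2$ to $r^{-1}\II\cdot\If^{-(n-3)/2}$. The paper states this map is ``clearly a bijection''; you have simply supplied the (correct) well-definedness and mutual-inversion checks that the paper omits.
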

\begin{proof}
Theorem \ref{threducible} shows that a projective
$\SL_n(\Z)$-orbit on $V(\Z)$ corresponding to the pair $(\II,\delta)$ is
reducible exactly when $\delta$ is a square, say
$\delta=\kappa^2$. The map from projective reducible
$\SL_n(\Z)$-orbits on $V(\Z)\cap\pi^{-1}(f)$ to $\cI_2(R)$ that sends
such an orbit to $\kappa^{-1}\II\cdot\If^{-\frac{n-3}{2}}$ is clearly a
bijection.
\end{proof}

\subsection{Parametrizations over other rings} \label{sec:orbitstab}

Let $T$ be a principal ideal domain. We now describe an analogue of Theorem \ref{thm:paramZ} over $T$, and we study a rigidified version of the parametrization to better understand the orbits and stabilizers of the group action.

The following theorem describes how $\SL_n(T)$-orbits of $V(T)$ are related to rank $n$ rings and ideal classes; it is a restatement of \cite[Thm.~6.3]{melanie-2nn}, using the fact that our base ring $T$ is a principal ideal domain:

\begin{thm}[\cite{melanie-2nn}]\label{thm:bij} 
Let $f \in U(T)$ be a nondegenerate primitive binary $n$-ic form.
Then there is a bijection between $\SL_n(T)$-orbits of $(A,B) \in V(T)$ with $f_{(A,B)} = f$ and
equivalence classes of pairs $(\II,\delta)$ where $\II \subset K_f :=
T[x]/(f(x,1))$ is an ideal of $R_f$ and $\delta \in K_f^\times$
satisfying $\II^2 \subset \delta \If^{n-3}$ as ideals and $\Norm(\II)^2 =
\Norm(\delta) \Norm(\If^{n-3})$.
Two pairs $(\II,\delta)$ and $(\II',\delta')$ are equivalent if there exists $\kappa \in K_f^\times$ such that $\II' = \kappa \II$ and $\delta' = \kappa^2 \delta$.
\end{thm}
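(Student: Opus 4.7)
The plan is to deduce the theorem from Wood's general parametrization \cite[Thm.~6.3]{melanie-2nn}, which gives the analogous bijection over an arbitrary base ring. In Wood's general statement, the right-hand side involves equivalence classes of triples consisting of a pair $(\II, \delta)$ together with an auxiliary trivialization of a certain top exterior power, needed because fractional ideals of $R_f$ are not free $T$-modules in general. When $T$ is a PID, any finitely generated torsion-free $T$-module is free; since $R_f$ is free of rank $n$ over $T$, so is every fractional ideal $\II \subset K_f$ of $R_f$. Consequently the trivialization data exists and is unique up to the stated equivalence, and Wood's statement collapses into the form given.

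To check this concretely, I would retrace the explicit construction from \S \ref{par} over $T$ in place of $\bZ$. In the forward direction, given $(\II, \delta)$, choose a $T$-basis $\alpha_1, \ldots, \alpha_n$ for $\II$ and form the symmetric $T$-bilinear pairing
\[
\varphi: \II \otimes_T \II \to \If, \qquad \alpha \otimes \alpha' \mapsto \frac{\alpha \alpha'}{\delta}.
\]
The containment $\II^2 \subset \delta \If^{n-3}$ guarantees the image lies in $\If$; composing with the $T$-linear projection to $\If / \langle 1, \theta, \ldots, \theta^{n-3}\rangle \cong T\,\zeta_{n-2} \oplus T\,\zeta_{n-1}$ and reading off coordinates yields $(A, B) \in V(T)$ exactly as in \eqref{explicitvarphi}. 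The norm condition, together with the matrix identity \eqref{eqalphabydelta}, forces $f_{(A,B)} = \det(Ax - By) = f$. Conversely, given $(A, B)$ with $f_{(A,B)} = f$, one defines $\II$ as a free $T$-module of rank $n$ with the $R_f$-action specified by right multiplication of row vectors by $BA^{-1}$ (as in \cite[Proposition 3.3]{melanie-2nn}, where $A$ is invertible over $\Frac(T)$ because $f_0 = \det A \neq 0$), and $\delta$ is pinned down up to $\kappa \in K_f^\times$ by the explicit ratios following \eqref{product}. Change of basis of $\II$ by $g_n \in \SL_n(T)$ corresponds to the action \eqref{eq:GLnaction} on $V(T)$, while rescaling by $\kappa \in K_f^\times$ matches the equivalence $(\II, \delta) \sim (\kappa \II, \kappa^2 \delta)$.

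The main obstacle will be ensuring $T$-integrality of the output, not merely $\Frac(T)$-integrality: the entries $a_{ij}, b_{ij}$ and the auxiliary $c_{ij}^{(k)}$ must all lie in $T$. This is precisely the content of the containment $\II^2 \subset \delta \If^{n-3}$: it says that $\varphi(\alpha_i \otimes \alpha_j)$ has $T$-integral coordinates in the basis \eqref{eq:If} of $\If$, which by \eqref{explicitvarphi} is exactly integrality of $(A, B)$. In the reverse direction, the integrality of $(A, B)$ together with the invertibility of $A$ over $\Frac(T)$ lets one evaluate the formulas for $C^{(k)}$ in \eqref{eqCk} to obtain $T$-integral structure constants on $\II$. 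Once integrality is in hand, the only remaining verification is that the two constructions are mutually inverse, and this follows directly from the identity \eqref{eqalphabydelta} relating $(A, B, \{c_{ij}^{(k)}\})$ to $\alpha_i\alpha_j/\delta$.
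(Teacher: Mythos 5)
Your proposal follows essentially the same route as the paper: Theorem \ref{thm:bij} is presented there as a direct restatement of \cite[Thm.~6.3]{melanie-2nn}, with the PID hypothesis collapsing the auxiliary basis/trivialization data (every fractional ideal of $R_f$ being free of rank $n$ over $T$), and your explicit retracing of the constructions of \S\ref{par} over $T$ matches the paper's \S\ref{par} verbatim. The one point you omit, and which the paper addresses explicitly, is that the theorems in \cite[\S 6]{melanie-2nn} are stated for $\SL_n^{\pm}(T)$-orbits (matrices of determinant $\pm 1$) rather than $\SL_n(T)$-orbits; since $n$ is odd, $\SL_n^{\pm}(T)\cong\{\pm 1\}\times\SL_n(T)$ and $-I$ acts trivially on pairs $(A,B)$ via \eqref{eq:GLnaction}, so the two orbit sets coincide. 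Without that one-line remark, quoting Wood's theorem verbatim yields a bijection with an a priori coarser set of orbits than the statement claims, so you should add it to complete the deduction.
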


Note that in \cite[\S 6]{melanie-2nn} the theorems are stated for
$\SL_n^{\pm}(T)$-orbits instead of $\SL_n(T)$-orbits, where
$\SL_n^{\pm}(T)$ denotes the elements of determinant $\pm 1$ in
$\GL_n(T)$. However, since $n$ is odd here, we have $\SL_n^\pm(T) \cong \{ \pm 1 \}
\times \SL_n(T)$, and since $-1$ acts trivially on pairs $(A,B)$ by
\eqref{eq:GLnaction}, the $\SL_n(T)$-orbits are precisely the same as
the $\SL_n^{\pm}(T)$-orbits.

In order to understand the stabilizer of the action of $\SL_n(T)$ on
an element $(A,B) \in V(T)$, we now discuss precisely with what the
elements (instead of $\SL_n(T)$-orbits) of $V(T)$ are in
correspondence, in terms of the pair $(\II,\delta)$ along with a basis
for $\II$.

\begin{prop}[\cite{melanie-2nn}]\label{prop:basedbij}
Let $f \in U(T)$ be a nondegenerate primitive binary $n$-ic form. 
Let $K_f := T[x]/(f(x,1))$. Then the
nonzero elements $(A,B) \in V(T)$ with $f_{(A,B)} = f$ are in
bijection with equivalence classes of triples $(\II,\mathcal{B},\delta)$
where $\II \subset K_f$ is a based ideal of $R_f$, with an ordered basis
given by an isomorphism $\mathcal{B}: \II \to T^n$ of $T$-modules, and
$\delta \in K_f^\times$, satisfying $\II^2 \subset \delta \If^{n-3}$ as
ideals and $\Norm(\II)^2 = \Norm(\delta)\Norm(\If^{n-3})$.
Two such triples $(\II,\mathcal{B},\delta)$ and
$(\II',\mathcal{B}',\delta')$ are equivalent if and only if there exists
$\kappa \in K_f^\times$ such that $\II' = \kappa \II$, $\mathcal{B} \circ
(\times \kappa) = \mathcal{B}'$, and $\delta' = \kappa^2 \delta$.
\end{prop}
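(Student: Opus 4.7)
The plan is to derive Proposition~\ref{prop:basedbij} from Theorem~\ref{thm:bij} by promoting the orbit-level bijection to an element-level one via $\SL_n(T)$-equivariance. The key observation is that the action \eqref{eq:GLnaction} of $\SL_n(T)$ on $V(T)$ corresponds exactly to change of ordered $T$-basis of $\II$, so tracking the basis $\mathcal{B}$ on the triple side removes precisely the ambiguity that Theorem~\ref{thm:bij} quotients out.

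First I would define the forward map following the construction of \S\ref{par}. Given $(\II,\mathcal{B},\delta)$, pull back the standard basis of $T^n$ through $\mathcal{B}^{-1}$ to an ordered $T$-basis $\alpha_1,\ldots,\alpha_n$ of $\II$, use $\II^2\subset\delta\If^{n-3}$ to define the symmetric $T$-bilinear map $\varphi\colon\II\otimes\II\to\If$ by $\alpha\otimes\alpha'\mapsto\alpha\alpha'/\delta$, and read off $a_{ij},b_{ij}\in T$ as the coefficients of $\zeta_{n-1}$ and $\zeta_{n-2}$ in $\varphi(\alpha_i\otimes\alpha_j)$ as in \eqref{explicitvarphi}. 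The check that $(A,B):=((a_{ij}),(b_{ij}))\in V(T)$ satisfies $f_{(A,B)}=f$ goes through over any integral domain exactly as it does over $\bZ$ in \S\ref{par}. For well-definedness on equivalence classes: when $(\II',\mathcal{B}',\delta')$ is obtained from $(\II,\mathcal{B},\delta)$ by scaling through some $\kappa\in K_f^\times$, the pulled-back basis is $\alpha_i'=\kappa\alpha_i$ and one computes $\alpha_i'\alpha_j'/\delta'=\kappa^2\alpha_i\alpha_j/(\kappa^2\delta)=\alpha_i\alpha_j/\delta$, so the coefficients $(a_{ij},b_{ij})$ are unchanged.

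The central step is to upgrade Theorem~\ref{thm:bij} to the based setting using equivariance. The natural action of $g\in\SL_n(T)$ on triples by $\mathcal{B}\mapsto g\circ\mathcal{B}$ transforms the pulled-back ordered basis contravariantly, which by the change-of-basis rule for symmetric bilinear forms sends $(A,B)$ to $(gAg^t,gBg^t)$, in agreement with \eqref{eq:GLnaction}. Since the forward map intertwines the two $\SL_n(T)$-actions and descends on quotients to the orbit-level bijection of Theorem~\ref{thm:bij}, the map itself is automatically a bijection. Bases $\mathcal{B}$ always exist because $\II$ is a nonzero fractional ideal of $R_f$, hence projective of rank $n$ and therefore free over the PID $T$; for injectivity on the level of elements, the ratio computation in \eqref{eqalphabydelta} shows that once $(\II,\delta)$ is fixed, the ordered basis is recovered from $(A,B)$ up to a scalar in $K_f^\times$, which is absorbed into the $\kappa$-equivalence.

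The main technical point I expect to be careful about is the equivariance bookkeeping: one must verify that the contravariant pullback on bases through $\mathcal{B}$ produces exactly the $g\cdot g^t$ rule on symmetric matrix pairs, which requires tracking a transpose arising from viewing elements of $\II$ as row versus column vectors relative to the ordered basis. It is also critical that $n$ is odd so that $\SL_n^{\pm}(T)\cong\{\pm1\}\times\SL_n(T)$ with the $-1$-factor acting trivially on $V(T)$; this is what allows the $\SL_n^{\pm}(T)$-orbit version in \cite{melanie-2nn} to collapse to the $\SL_n(T)$-orbit formulation used here, and makes the element-level refinement meaningful without an extra sign ambiguity.
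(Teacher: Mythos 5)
The paper does not actually prove this proposition: it is imported verbatim (in ``symmetric'' form) from \cite[Thm.~6.1]{melanie-2nn}, and the paper's logical direction is the \emph{reverse} of yours --- it takes the based, element-level statement as the primitive input and observes that quotienting both sides by $\SL_n(T)$ recovers Theorem \ref{thm:bij}. Your plan to go the other way, from the orbit-level Theorem \ref{thm:bij} up to the element-level statement, contains a genuine gap at its central step. Equivariance of the forward map together with bijectivity on orbit spaces gives you surjectivity for free, but it does \emph{not} ``automatically'' give injectivity: if $F$ is $G$-equivariant and induces a bijection $X/G\to Y/G$, then $F(x_1)=F(x_2)$ only forces $x_2=g\cdot x_1$ for some $g\in\Stab_G(F(x_1))$, and you need $\Stab_G(F(x))=\Stab_G(x)$ for every $x$ to conclude. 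In this paper that stabilizer identification is exactly Corollary \ref{cor:stabs}, which is itself \emph{deduced from} Proposition \ref{prop:basedbij}; invoking it here would be circular, and you do not supply an independent proof of it.

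Your fallback injectivity argument --- that the ratio computation around \eqref{eqalphabydelta} recovers the ordered basis from $(A,B)$ up to a scalar $\kappa\in K_f^\times$ --- is in fact the real content of Wood's proof (it amounts to constructing the inverse map by hand), and if carried out in full it would make the equivariance argument superfluous. But as written it is only sketched for the integral case of \S\ref{par}: over a general PID $T$ the algebra $K_f=T[x]/(f(x,1))$ need not be a domain (nondegeneracy of $f$ gives separability, not irreducibility), so forming the ratios $\alpha_1:\cdots:\alpha_n$ requires justifying that the relevant entries are not zero divisors and that the resulting module is genuinely free of rank $n$; one also has to check that the explicit forward construction of \S\ref{par} is the one inducing the bijection of Theorem \ref{thm:bij}, since that theorem is quoted as a black box. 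In short, the strategy can be salvaged only by doing the direct construction in both directions --- which is precisely the proof in \cite{melanie-2nn} that the paper cites --- and the purely formal ``promotion by equivariance'' step does not stand on its own.
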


As stated, Proposition \ref{prop:basedbij} is a ``symmetric'' version
of the first part of \cite[Thm. 6.1]{melanie-2nn}.
For any $(A,B) \in V(T)$ corresponding to $(\II,\mathcal{B},\delta)$ in
Proposition \ref{prop:basedbij}, the action of $\SL_n(T)$ on $(A,B)$
as in  \eqref{eq:GLnaction} induces an action of $\SL_n(T)$ on
the basis $\mathcal{B}$ through the correspondence, namely as given in
\eqref{SLaction}. This action of $\SL_n(T)$ takes $\II$ to
itself and does not affect $\delta$, so $\SL_n(T)$ acts on the triples
$(\II,\mathcal{B},\delta)$. 
Quotienting both sides of the correspondence in Proposition
\ref{prop:basedbij} by $\SL_n(T)$ yields precisely Theorem \ref{thm:bij}.

For the computations in later sections, we are interested in the
stabilizer of $(A,B) \in V(T)$ in $\SL_n(T)$. Any $g \in \SL_n(T)$
that fixes $(A,B)$ must correspond to an automorphism of the
corresponding triple $(\II,\mathcal{B},\delta)$; as $g$ preserves the map
$\mathcal{B}$, it is, up to scaling, an automorphism of $I$ as a 
$\Z[T]$-module. Because the discriminant of the corresponding form $f$ is nonzero,
such a module homomorphism is given by multiplication by a nonzero scalar. Since $g$ also fixes $\delta$, 
in fact $g$ corresponds to multiplication by an element $\kappa \in K_f^\times$
with $\kappa^2 = 1$ (in fact, such $\kappa$ lie in $R_f^\times$).
Furthermore, since multiplication on $\mathcal{B}$ by $\kappa$ exactly
corresponds to multiplication by the matrix $g$, we must have
$\Norm(\kappa) = \det(g)= 1$.  It is also easy to check that any such
$\kappa$ yields an element $g\in\SL_n(T)$ that stabilizes $(A,B)$.
We thus have the following description of the stabilizers:

\begin{cor} \label{cor:stabs}
Fix a principal ideal domain $T$. Let $(A,B) \in V(T)$ be a
nondegenerate element with primitive binary $n$-ic invariant $f$, corresponding to the ring $R_f$
and the pair $(\II, \delta)$ under Theorem \ref{thm:bij}. Then the stabilizer
group in $\SL_n(T)$ of $(A,B)$ corresponds to the norm $1$ elements $R_f^\times[2]_{\Norm \equiv 1}$
of the $2$-torsion in $R_f^\times$.
\end{cor}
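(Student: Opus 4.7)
The plan is to deduce Corollary \ref{cor:stabs} directly from the ``based'' parametrization in Proposition \ref{prop:basedbij}, using the fact that the $\SL_n(T)$-action on $V(T)$ translates, under that parametrization, into an action that modifies only the chosen basis $\mathcal{B}$ of the ideal $\II$, leaving both $\II$ and $\delta$ untouched. So an element $g \in \SL_n(T)$ stabilizes $(A,B)$ if and only if the triple $(\II, g\cdot\mathcal{B}, \delta)$ lies in the same equivalence class as $(\II, \mathcal{B}, \delta)$ in the sense of Proposition \ref{prop:basedbij}.

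First I would spell out the equivalence relation: $g$ stabilizes $(A,B)$ precisely when there exists $\kappa \in K_f^\times$ such that
\begin{equation*}
\kappa \II = \II, \qquad \kappa^2 \delta = \delta, \qquad \mathcal{B}\circ(\times \kappa) = g\cdot\mathcal{B}.
\end{equation*}
From $\kappa^2\delta=\delta$ and $\delta\in K_f^\times$ we immediately obtain $\kappa^2=1$. Next I would argue that $\kappa\II=\II$ forces $\kappa\in R_f^\times$: because $f$ is primitive, $\II$ is an invertible (projective) fractional ideal of $R_f$, so its endomorphism ring is exactly $R_f$; hence $\kappa\in R_f$, and symmetrically $\kappa^{-1}\in R_f$. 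Combined with $\kappa^2=1$, this shows $\kappa\in R_f^\times[2]$.

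It remains to identify the $\SL_n$-condition on $g$ with the norm condition on $\kappa$. The equation $\mathcal{B}\circ(\times\kappa)=g\cdot\mathcal{B}$ shows that $g$ is the matrix, in the $T$-basis $\mathcal{B}$, of the $T$-linear endomorphism of $\II$ given by multiplication by $\kappa$. The determinant of that endomorphism is, by definition, $\Norm_{K_f/T}(\kappa)$; thus $\det(g)=\Norm(\kappa)$, and $g\in \SL_n(T)$ if and only if $\Norm(\kappa)=1$. Conversely, any $\kappa\in R_f^\times[2]$ with $\Norm(\kappa)=1$ produces such a matrix $g\in\SL_n(T)$, and this $g$ manifestly fixes $(A,B)$ via the bijection of Proposition \ref{prop:basedbij}. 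So the assignment $g\mapsto\kappa$ is the desired bijection between $\Stab_{\SL_n(T)}(A,B)$ and $R_f^\times[2]_{\Norm\equiv 1}$.

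The main obstacle, modest as it is, is the step $\kappa\II=\II\Rightarrow\kappa\in R_f^\times$: one has to invoke (or recall) that the primitivity of $f$ makes $\II$ an invertible $R_f$-ideal, so that the endomorphism ring $(\II:\II)$ coincides with $R_f$ rather than being strictly larger (which could happen for non-Gorenstein $R_f$). Once this is in place, everything else is formal from the based parametrization.
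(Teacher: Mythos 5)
Your argument is essentially the paper's own proof: the paper also passes to the based parametrization of Proposition \ref{prop:basedbij}, identifies a stabilizing $g$ with an automorphism $\kappa$ of the triple $(\II,\mathcal{B},\delta)$ satisfying $\kappa^2=1$, and equates $\det(g)$ with $\Norm(\kappa)$ to impose the norm-one condition; the converse direction is handled the same way. The one place you go beyond the paper is in justifying the step $\kappa\,\II=\II\Rightarrow\kappa\in R_f^\times$, and there your justification overreaches: primitivity of $f$ guarantees that the \emph{distinguished} ideal $\If$ is invertible (equivalently, that $R_f$ is Gorenstein), not that the ideal $\II$ attached to an arbitrary $(A,B)$ is invertible --- invertibility of $\II$ is precisely the ``projective'' condition, which is not assumed in the corollary. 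For non-projective $(A,B)$ the multiplier ring $(\II:\II)$ may strictly contain $R_f$, so one would still need to rule out $2$-torsion units of $(\II:\II)$ outside $R_f^\times$. Your argument is complete in the cases the paper actually uses (where $T$ is a field, so $\II=R_f$, or where $(A,B)$ is projective, as in Corollary \ref{corjac}); note that the paper itself only asserts ``such $\kappa$ lie in $R_f^\times$'' parenthetically and without proof, so on this point you are no worse off than the source, but you should not attribute the invertibility of $\II$ to the primitivity of $f$.
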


In the cases where $T$ is a field or $\Zp$, we may also describe the
$\SL_n(T)$-orbits of $V(T)$ corresponding to a given binary $n$-ic invariant in a simple way.
We restrict to {\em projective} orbits, i.e., those corresponding to $(\II,\delta)$ where
$\II$ is projective as an $R_f$-module. (In the case where $T$ is a field, this will
be no restriction.)

\begin{cor} \label{cor:orbits}
Let $T$ be a field or $\Zp$. Let $f$ be a separable nondegenerate binary
$n$-ic form with coefficients in $T$.  Then the projective $\SL_n(T)$-orbits of $V(T)$
with invariant binary $n$-ic form $f$ are in bijection with elements
of $(R_f^\times / (R_f^\times)^2)_{\Norm \equiv 1}$.
\end{cor}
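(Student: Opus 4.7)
The plan is to reduce Corollary~\ref{cor:orbits} to a computation of the group $\rmH(R_f)$ from \S\ref{sec:composition} over the base ring $T$.

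For the reduction, I would run the argument of Proposition~\ref{prophr} verbatim over $T$: since $f$ is (implicitly) primitive, the ideal $\If$ is invertible, and the shift $(\II,\delta)\mapsto(J,\delta):=(\II\cdot\If^{-(n-3)/2},\delta)$ converts a projective pair from Theorem~\ref{thm:bij} into a pair with $J$ an invertible fractional ideal of $R_f$ and $J^2=(\delta)$, i.e., into an element of $\rmH(R_f)$. This produces a bijection between the set of projective $\SL_n(T)$-orbits of $V(T)\cap\pi^{-1}(f)$ and $\rmH(R_f)$, with no input from the specific base $\Z$.

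For the computation of $\rmH(R_f)$, I use that $R_f$ is a finite $T$-algebra of rank $n$, hence semilocal: for $T$ a field $R_f$ is Artinian, and for $T=\Zp$ the reduction $R_f/pR_f$ is a finite (hence Artinian) $\F_p$-algebra whose maximal ideals are in bijection with those of $R_f$. Over a semilocal ring every invertible module is free, so $\Cl(R_f)=0$ and in particular $\Cl_2(R_f)=0$. The derivation of the exact sequence~\eqref{eqexactseqcl} in \S\ref{sec:composition} uses only the group law on projective pairs and the surjection $(J,\delta)\mapsto[J]$, so it applies verbatim to $R_f$ over $T$; with $\Cl_2(R_f)=0$ it collapses to
\[
\rmH(R_f)\;\cong\;(R_f^\times)_{\Norm\equiv 1}\,\big/\,\{v^2 : v\in R_f^\times,\ \Norm(v)=\pm 1\}\;=\;(R_f^\times/(R_f^\times)^2)_{\Norm\equiv 1}.
\]
Composing with the reduction above yields the bijection claimed in the corollary; explicitly, the projective orbit with data $(\II,\delta)$ is sent to the class of $\delta/\alpha^2$, where $\alpha\in R_f$ is any generator of the principal ideal $J=\II\cdot\If^{-(n-3)/2}$.

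The main obstacle is the bookkeeping around the norm condition: I have to verify that $\Norm(\II)^2=\Norm(\delta)\Norm(\If^{n-3})$ in Theorem~\ref{thm:bij} becomes, after the shift and the choice of generator $\alpha$, precisely the condition $\Norm(\delta/\alpha^2)=1$, and that the ambiguity in the choice of $\alpha$ cuts out exactly the subgroup $\{v^2 : \Norm(v)=\pm 1\}$ of $R_f^\times$. Both are direct consequences of multiplicativity of the norm and the equality of based norms used in \S\ref{par}, but they are the essential input for matching $\rmH(R_f)$ with the norm-one subgroup of $R_f^\times/(R_f^\times)^2$ as displayed.
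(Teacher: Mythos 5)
Your argument is correct and ultimately performs the same computation as the paper's, but it reaches it by a different route. The paper works directly with the pairs $(\II,\delta)$: for $T$ a field it notes $R_f$ is a product of field extensions, so $\If=R_f$ and $\II^2=\delta R_f$ with $\delta$ invertible forces $\II=R_f$; for $T=\Zp$ it asserts $R_f$ is a product of finite extensions of $\Zp$, hence a principal ideal ring, so $\II$ is determined by $\delta$ and the norm condition makes $\Norm(\delta)$ a square. You instead pass through the group $\rmH(R_f)$ of \S\ref{sec:composition} and the exact sequence \eqref{eqexactseqcl}, killing the class group by semilocality. The common kernel of both arguments is that invertible fractional ideals of $R_f$ are principal, but your justification is the more robust one: when $\Delta(f)$ is a nonzero non-unit of $\Zp$, the ring $R_f$ can be a non-maximal order and is \emph{not} literally a product of finite extensions of $\Zp$ (so the paper's sentence is, strictly speaking, only correct for \'etale $R_f$), yet it is still semilocal, so $\Pic(R_f)=0$ and your argument applies verbatim --- which matters, since the corollary is invoked for arbitrary primitive nondegenerate $f\in U(\Zp)$ in Corollary \ref{corjac} and Lemma \ref{lemBK}. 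The one step you defer but should actually carry out is the norm bookkeeping: identifying $(R_f^\times)_{\Norm=1}/\{v^2:\Norm(v)=\pm1\}$ with the classes in $R_f^\times/(R_f^\times)^2$ whose norm is a square in $T^\times$ genuinely uses that $n$ is odd (if $\Norm(\delta)=t^2$, then $\kappa=t\,\delta^{-(n+1)/2}$ is a unit with $\Norm(\kappa)=t^{-1}$, so $\delta\kappa^2$ has norm $1$); this reconciles the two readings of the subscript ``$\Norm\equiv1$'' that the paper itself uses (literal norm $1$ over a field, norm a square over $\Zp$). With that verification included, your proof is complete.
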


\begin{proof}
Let $T=k$ be a field and let $f$ be a separable nondegenerate binary
$n$-ic form over $k$. Then
$R_f$ is a commutative $k$-algebra of dimension $n$, and
in particular, a direct product of field extensions of $k$ and thus a
principal ideal ring. It is easy to check that $\If=R_f$.
In this case, Theorem \ref{thm:bij} implies that $\SL_n(k)$-orbits on
$V(k)$ with binary $n$-ic invariant $f$ correspond to
equivalence classes of pairs $(\II, \delta)$, where $\II$ is a fractional
ideal of $R_f$ and $\delta \in R_f^\times$ such that $\II^2 = \delta
\If^{n-3} = \delta R_f$.  The only ideals in $R_f$ are products of
either the unit ideal or the zero ideal in each of the
factors; since $\delta$ must be invertible, we have $\II = R_f$ and
so $\Norm(\delta) = 1$.  Thus, the equivalence classes of the pairs
$(\II,\delta)$ are parametrized by norm $1$ elements $\delta$ of $R_f^\times /
(R_f^\times)^2$.

Now let $T = \Zp$.  The ring $R_f$ is a direct product of finite extensions of $\Zp$
and is thus a principal ideal ring.  For projective pairs $(\II,\delta)$ as in Theorem
\ref{thm:bij}, the norm condition implies that $\II^2 = \delta
\If^{n-3}$.  As a result, the ideal $\II$ is again determined by the element
$\delta$ of $R_f^\times$.  Furthermore, since $n-3$ is even, we obtain
that $$\Norm(\delta) = \left( \frac{\Norm(\II)}{\Norm(\If^{(n-3)/2})}
\right)^2$$ is a square, so the set of equivalence classes of pairs
$(\II,\delta)$ are parametrized by $(R_f^\times / (R_f^\times)^2)_{\Norm \equiv 1}$.
\end{proof}

\begin{example} \label{ex:paramreal}
For $k = \R$, for a given $f$ as above, we have that $R_f$ is
isomorphic to $\R^{r_1} \times \C^{r_2}$ for some nonnegative integers
$r_1$ and $r_2$ with $r_1 + 2 r_2 = n$.  Then the number of
$\SL_n(\R)$-orbits with invariant binary $n$-ic form $f$ is
$2^{r_1-1}$, and the order of the stabilizer in
$\SL_n(\R)$ is $2^{r_1+r_2-1}$.
\end{example}

\section{Counting binary $n$-ic forms in acceptable familes} \label{sec:binarynics}

Our goal in this section is to determine asymptotics for the number of
irreducible elements in {\em acceptable} families of binary $n$-ic forms
having bounded height, as well as to determine asymptotics for the number of irreducible
$\SL_2(\Z)$-orbits on $\SL_2(\Z)$-invariant acceptable families having
bounded Julia invariant. We first define an acceptable family of binary $n$-ic forms, as well as how to compute the size of such families when ordered by height. We then define the Julia invariant, and recall a result of \cite{BY-Julia} on the asymptotics of orbits of binary $n$-ic forms ordered by Julia invariant. 

\subsection{Acceptable families of binary $n$-ic forms}\label{acc}
Recall that $U(T)=\Sym_n(T^2)$ denotes the space of binary $n$-ic forms 
over a ring $T$, and an element $\gamma \in \SL_2(T)$ acts on $f \in U(T)$ via
$\gamma f(x,y)=f((x,y)\gamma)$. Let $\Delta(f)$ denote the discriminant
of a form $f \in U(T)$.  Let $U(\R)^{(r_2)}$ denote the
set of binary $n$-ic forms with coefficients in $\R$ that have nonzero
discriminant and $r_2$ pairs of complex conjugate roots for some fixed
$r_2\in\{0,\ldots,(n-1)/2\}$.
\begin{defn} For each finite prime $p$, let
$\Sigma_p\subset U(\Z_p) \setminus \{\Delta=0\}$ be a nonempty open
set whose boundary has measure $0$, and let $\Sigma_\infty=U(\R)^{(r_2)}$
for some such $r_2$.
 We say that a collection
$\Sigma=(\Sigma_p)_p \cup \Sigma_\infty$ is {\it acceptable} if, for all large enough
primes $p$, the set $\Sigma_p$ contains all elements $f\in U(\Z_p)$
with $p^2\nmid \Delta(f)$. We refer to each $\Sigma_\nu$ where $\nu$ is any finite or infinite place of $\bQ$ as a {\em local specification} of $\Sigma$ at $\nu$. To a collection $\Sigma$, we associate a
family $\U(\Sigma)$ of integral binary $n$-ic forms given by
\begin{equation*}
\U(\Sigma)=\{f\in U(\Z):\;f\in\Sigma_\nu\; \mbox{ for all places } \nu\},
\end{equation*}
and say that $\U(\Sigma)$ is {\it acceptable} if $\Sigma$ is. \end{defn}
 Note
that if $\Sigma_p$ is $\SL_2(\Z_p)$-invariant for every prime $p$ (the
set $\Sigma_\infty$ is automatically $\SL_2(\R)$-invariant), then
$\U(\Sigma)$ is $\SL_2(\Z)$-invariant. In this case, we say that such a collection $\Sigma$ is $\SL_2$-invariant. Additionally, for any $\U(\Sigma)$, note that there is a multi-subset $\Sigma_H = \{R_f \mid f \in \U(\Sigma)\}$ inside $\fR_H$. Similarly, for any $\SL_2$-invariant $\U(\Sigma)$, there is also a multi-subset $\Sigma_J = \{R_{[f]} \mid [f] \in \SL_2(\Z)\backslash\U(\Sigma)\}.$ We say that a family $\Sigma_H$ or $\Sigma_J$ is {\em acceptable} if it is defined by an acceptable family $\U(\Sigma)$ of integral binary $n$-ic forms. 

\subsection{Binary $n$-ic forms ordered by height}
In this subsection, we order real and integral binary $n$-ic forms
by the following height function:
\begin{equation}\label{eqHbnf}
H(f_0x^n+\cdots+f_ny^n):=\max|f_i|.
\end{equation}
For any subset $S$
of $U(\R)$ or $U(\Z)$, we denote the set of elements in $S$ having
height less than $X$ by $S_{H<X}$. For a subset $S$ of $U(\Z)$, we
denote the subset of irreducible elements in $S$ by $S^\irr$.
Asymptotics for the number of integral irreducible binary $n$-ic forms
having squarefree discriminant and bounded height is determined in
\cite{BSW-part2}. The key ingredient in that result is a tail estimate
on the number of integral binary $n$-ic forms having bounded height
whose discriminants are divisible by $p^2$ for large primes
$p$. Namely, let $W_p\subset U(\Z)$ denote the set of integral binary
$n$-ic forms with $p^2\mid\Delta(f)$. Then the following tail estimate
is proved in \cite{BSW-part2}:
\begin{prop}\label{thunifbcf}
We have
\begin{equation*}
\#\Bigl(\bigcup_{p>M}W_p\Bigr)_{H<X}=O\Bigl(\frac{X^{n+1}}{\sqrt{M}}\Bigr)
+o(X^{n+1}).
\end{equation*}
\end{prop}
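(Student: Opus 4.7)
The plan is to apply the union bound $\#(\bigcup_{p>M}W_p)_{H<X}\leq \sum_{p>M}\#(W_p)_{H<X}$ and control the resulting sum by splitting the primes into a small-$p$ range and a large-$p$ range at a threshold $Y$ to be chosen. The central input is that $W_p$ has $p$-adic density $O(1/p^2)$ in $U(\Z_p)$: indeed, $\Delta$ is a single polynomial in the $n+1$ coefficients of $f$, so $\{\Delta(f)\in p^2\Z_p\}$ is generically a codimension-$2$ condition on a point of $U(\Z/p^2\Z)$. One must also track the small additional contribution from the singular locus $\{\Delta\equiv 0\bmod p\}$ where the mod-$p$ geometry degenerates, but this only affects the implicit constant.

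For the small primes $M<p\leq Y$, I would apply a standard geometry-of-numbers lattice-point count for regions defined by congruence conditions (following \cite{manjulcountquartic}) to obtain $\#(W_p)_{H<X}=O(X^{n+1}/p^2)+O(X^n)$, where the main term is the volume contribution and the error is boundary. Summing via $\sum_{p>M}1/p^2=O(1/M)$ yields a small-prime contribution of $O(X^{n+1}/M)+O(X^n\cdot Y/\log Y)$. For the large primes $p>Y$, I would use the structural observation that if $\Delta(f)\neq 0$ and $p^2\mid \Delta(f)$ then $p^2\leq |\Delta(f)|\leq C_nX^{2n-2}$, so the relevant primes lie in $Y<p\leq C_n^{1/2}X^{n-1}$. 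In this range, $p^2\mid\Delta(f)$ forces $f$ to have a repeated factor modulo $p$; decomposing $W_p$ by the factorization pattern of $f\bmod p$, each prescribed double root $r\in\mathbb{P}^1(\F_p)$ forces $f$ into a sublattice of index $p^2$ in $U(\Z)$, and the further mod-$p$ condition needed to upgrade $p\mid\Delta(f)$ to $p^2\mid\Delta(f)$ cuts this further, allowing one to bound the large-prime sum by $O(X^{n+1}/\sqrt{Y})+o(X^{n+1})$. Balancing $Y$ and $M$ so that both contributions are of order $X^{n+1}/\sqrt{M}$ yields the bound; forms with $\Delta(f)=0$ lie on a proper subvariety and contribute only $O(X^n)=o(X^{n+1})$.

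The hard part will be making the large-$p$ argument give the bound $O(X^{n+1}/\sqrt{Y})$ uniformly for $p$ as large as $X^{n-1}$. In this regime the naive $p$-adic density estimate fails because the boundary error $O(X^n)$ in the lattice-point count accumulates faster than the main term decays, and one must instead enumerate factorization patterns of $f\bmod p$ directly and bound each sublattice by its index together with Minkowski-type estimates for the number of lattice points in convex bodies. The fact that the final exponent is $\tfrac{1}{2}$ rather than $1$ reflects precisely this trade-off, and this is also the point at which uniformity in $n$ enters the argument, analogous to the tail estimates handled for infinite families of representations in \cite{manjul-hyper, bg-hyper}.
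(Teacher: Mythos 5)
You should first be aware that the paper does not prove this proposition at all: it is quoted verbatim from \cite{BSW-part2}, where it is the main technical theorem and the object of a long and difficult argument. So a blind proof must essentially reconstruct that argument, and yours does not. Your skeleton (union bound, density $O(1/p^2)$, split at a threshold $Y$, treat $p\leq Y$ by congruence counting and $p>Y$ by exploiting $p^2\leq|\Delta(f)|\ll X^{2n-2}$) is the right shape, and the small-prime range is unproblematic. The genuine gap is exactly where you flag ``the hard part,'' and the fix you propose there does not work. The correct decomposition of $W_p$ is into forms whose discriminant is \emph{strongly} divisible by $p^2$ (meaning $p^2\mid\Delta(g)$ for every $g\equiv f\bmod p$; concretely, $f\bmod p$ has a triple root or two double roots) and forms whose discriminant is \emph{weakly} divisible by $p^2$ (a single double root $r$ modulo $p$ plus one further condition \emph{modulo $p^2$}). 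The strongly divisible locus is a genuine codimension-two condition modulo $p$ and is handled uniformly in $p$ by the Ekedahl--Bhargava geometric sieve. The weakly divisible locus, while still of density $O(1/p^2)$, is cut out by a condition modulo $p^2$: after moving the double root to $[0:1]$ it reads $p\mid f_{n-1}$ and $p^2\mid f_n$, so for each prescribed $r\in\PP^1(\F_p)$ the form is confined to a coset of a sublattice one of whose elementary divisors is $p^2$. Such a lattice is maximally unbalanced, and any Minkowski/Davenport count of its points in a box of side $X$ carries error terms that overwhelm the main term once $p\gg X$; summing over the $\sim p$ choices of $r$ and over $p$ up to $X^{n-1}$ then gives nothing useful. ``Enumerating factorization patterns and bounding each sublattice by its index,'' as you propose, therefore cannot yield $O(X^{n+1}/\sqrt{Y})$ uniformly in the required range --- this is precisely the obstruction that made the estimate open.

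The resolution in \cite{BSW-part2} is a different device entirely: an explicit algebraic map sending a pair $(f,r)$, with $r$ a double root modulo $p$ at which $p^2$ weakly divides $\Delta(f)$, to an integral point of an auxiliary representation (closely related to the space $V(\Z)=\Z^2\otimes\Sym_2(\Z^n)$ appearing in this paper) whose invariant binary form has discriminant $\Delta(f)/p^2$. Counting in that space, where the discriminant has dropped by a factor of $p^2$, is what produces the saving of $M^{-1/2}$; no argument internal to the lattice $U(\Z)$ is known to achieve it. Without this (or an equivalent) ingredient, your proposal does not establish the proposition.
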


The next theorem follows from Proposition \ref{thunifbcf} just as
\cite[Theorem 2.21]{arulmanjul-bqcount} follows from \cite[Theorem
  2.13]{arulmanjul-bqcount}.
\begin{thm}\label{thbcfHcount}
Let $\Sigma$ be an acceptable collection of local specifications. Then
we have
\begin{equation}\label{eqthbcfHcount}
\#\U(\Sigma)_{H<X}^\irr=\Vol(\Sigma_{\infty,H<X})
\prod_p\Vol(\Sigma_p)+o(X^{n+1}).
\end{equation}
\end{thm}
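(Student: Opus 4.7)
The plan is to deduce the asymptotic from three ingredients: a standard lattice point count in a bounded region of $U(\R)$, a finite-prime sieve using the local conditions $\Sigma_p$, and the uniform tail estimate Proposition \ref{thunifbcf} to control the error from primes $p>M$ as $M\to\infty$. The reducible forms contribute $O(X^n)$ and can be absorbed into the error term throughout.

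First, fix a threshold $M>0$. For every finite set of primes $\{p\leq M\}$, the subset $\U(\Sigma^{(M)})\subset U(\Z)$ of forms satisfying the local conditions $\Sigma_p$ at all $p\leq M$ is a union of translates of the lattice $\bigl(\prod_{p\leq M}p\bigr)^{O(1)}\cdot U(\Z)$ inside $U(\Z)$. Since $\Sigma_\infty=U(\R)^{(r_2)}$ is an open cone with boundary of measure zero and $H$ is a homogeneous function of degree $1$, a standard application of a theorem of Davenport (lattice point counting in bounded homogeneous regions whose boundary is $(n+1)$-Lipschitz parametrized) yields
\begin{equation*}
\#\U(\Sigma^{(M)})_{H<X}=\Vol(\Sigma_{\infty,H<X})\prod_{p\leq M}\Vol(\Sigma_p)+O_M(X^{n}),
\end{equation*}
where the implied constant depends on $M$ but not on $X$. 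The product over $p\leq M$ arises from the fact that $\Vol(\Sigma_p)$ equals the local $p$-adic density of integral forms satisfying the local condition at $p$, and the conditions at distinct primes are independent by the Chinese Remainder Theorem.

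Next I need to pass from $\U(\Sigma^{(M)})$ to $\U(\Sigma)$. The difference consists of integral forms in $\U(\Sigma^{(M)})$ that violate the local condition $\Sigma_p$ for some $p>M$. By the acceptability hypothesis, every such form must have $p^2\mid\Delta(f)$ for some $p>M$, so the difference is contained in $\bigcup_{p>M}W_p$. Proposition \ref{thunifbcf} gives
\begin{equation*}
\#\bigl(\U(\Sigma^{(M)})\setminus\U(\Sigma)\bigr)_{H<X}\leq\#\Bigl(\bigcup_{p>M}W_p\Bigr)_{H<X}=O\Bigl(\frac{X^{n+1}}{\sqrt{M}}\Bigr)+o(X^{n+1}).
\end{equation*}
Combining this with the previous display and dividing by $X^{n+1}$, we obtain that for every $M$,
\begin{equation*}
\limsup_{X\to\infty}\biggl|\frac{\#\U(\Sigma)_{H<X}}{X^{n+1}}-\frac{\Vol(\Sigma_{\infty,H<X})}{X^{n+1}}\prod_{p\leq M}\Vol(\Sigma_p)\biggr|=O\Bigl(\frac{1}{\sqrt{M}}\Bigr).
\end{equation*}
Since $\Vol(\Sigma_p)=1+O(p^{-2})$ for all large $p$ (the acceptability hypothesis forces $\Sigma_p$ to contain the complement of a codimension $\geq 2$ set), the infinite product $\prod_p\Vol(\Sigma_p)$ converges and $\prod_{p\leq M}\Vol(\Sigma_p)\to\prod_p\Vol(\Sigma_p)$ as $M\to\infty$. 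Letting $M\to\infty$ yields the desired asymptotic up to reducible forms.

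Finally, the contribution of reducible integral binary $n$-ic forms (those with a rational linear factor, or equivalently with a $\Q$-rational zero in $\PP^1$) of height less than $X$ is $O(X^n)=o(X^{n+1})$, by a straightforward parametrization of such forms and lattice point counting, so removing them does not affect the leading term. The main obstacle is the tail estimate controlling the contribution of forms with $p^2\mid\Delta(f)$ for large $p$, but this is exactly the content of Proposition \ref{thunifbcf} and is assumed; everything else is a standard geometry-of-numbers sieve argument of the type used in \cite{arulmanjul-bqcount}.
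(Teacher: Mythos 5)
Your argument is correct and is essentially the paper's proof, which simply observes that the theorem follows from Proposition \ref{thunifbcf} exactly as \cite[Theorem 2.21]{arulmanjul-bqcount} follows from \cite[Theorem 2.13]{arulmanjul-bqcount} --- i.e., a finite-level congruence count plus the uniform tail estimate over primes $p>M$, with reducible forms contributing negligibly. The only imprecision is your claim that $\U(\Sigma^{(M)})$ is a union of translates of a fixed lattice of level $\bigl(\prod_{p\leq M}p\bigr)^{O(1)}$: since each $\Sigma_p$ is merely an open set with boundary of measure zero (not necessarily defined by congruence conditions to a bounded modulus), one must first approximate each $\Sigma_p$ from inside and outside by finite unions of congruence classes before applying the Davenport-style lattice count, which is exactly what the cited standard argument does.
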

Note that since $\Vol(\Sigma_{\infty,H<X})$ grows like a nonzero
constant times $X^{n+1}$, the error term in the right hand side of
\eqref{eqthbcfHcount} is indeed smaller than the main term.

\subsection{$\SL_2(\Z)$-orbits on binary $n$-ic forms ordered 
by Julia invariant}\label{juliadef} 

Every binary $n$-ic form with real coefficients whose leading
coefficient $a_0$ is nonzero can be written as
\begin{equation*}
f(x,y)=a_0(x-\alpha_1y)\cdots(x-\alpha_ny),
\end{equation*}
with $\alpha_i\in\C$. For $t=(t_1,\ldots,t_n)\in\R^n$, consider the
positive definite binary quadratic form
\begin{equation*}
Q_t(x,y)=\sum_{i=1}^nt_i^2(x-\alpha_iy)(x-\bar{\alpha_i}y).
\end{equation*}
Work of Julia \cite{julia} and Stoll--Cremona \cite{cremonastoll-reductionbinaryforms} shows that if $t$ is chosen to minimize the
quantity
\begin{equation}\label{eqjulia}
\vartheta(f)=\frac{a_0^2|\Disc\,Q_t|^{n/2}}{t_1^2\cdots t_n^2},
\end{equation}
then $\vartheta$ is an $\SL_2(\R)$-invariant of $f$, i.e.,
$\vartheta(f)=\vartheta(\gamma\cdot f)$ for any $\gamma\in\SL_2(\R)$. We call
$\vartheta$ the {\it Julia invariant} of the binary $n$-ic form $f(x,y)$.
The Julia invariant is not a polynomial invariant, but it is
homogeneous of degree $2$, in the sense that $\vartheta(\lambda
f)=\lambda^2\vartheta(f)$ for $\lambda\in\R^\times$. Indeed, the roots of
$f$ and $\lambda f$ are the same; when we replace $f$
with $\lambda f$, the $a_0$ in the right hand side of \eqref{eqjulia} is
replaced with $\lambda^2 a_0$ while the remaining quantities stay the
same. In this section, we will order $\SL_2(\Z)$-orbits $[f]$ of $U(\Z)$ by
the degree $1$ invariant
\begin{equation}\label{eqJ}
J(f)=\sqrt{\vartheta(f)}.
\end{equation}
Note that we may define the Julia invariant for forms $f$ with leading coefficient $0$ by using an $\SL_2(\R)$-equivalent form with nonzero leading coefficient.

Asymptotics for the number of irreducible $\SL_2(\Z)$-orbits on
integral binary $n$-ic forms were recently computed by Bhargava and Yang
\cite{BY-Julia}. The following theorem is a rewording of \cite[Theorem 9]{BY-Julia}:

\begin{thm}\label{thjuliacount}
Let $n$ be a positive integer, and let $r_2 \in\{0,1,\ldots, \lfloor n/2 \rfloor \}$.  Let
$\Sigma$ be a collection of local specifications such that the family
$\U(\Sigma)$ is defined by finitely many congruence conditions, and
$\Sigma_\infty=U(\R)^{(r_2)}$. Then there exists a
constant $c_{n,{r_2}}$, depending only on $n$ and $r_2$, such that
\begin{equation}\label{eqJuliafinitecount}
\#(\SL_2(\Z)\backslash
\U(\Sigma)^{\irr}_{J<X})=c_{n,{r_2}}\prod_p\Vol(\Sigma_p)X^{n+1}+
O(X^{n+1-\frac2n}).
\end{equation}
\end{thm}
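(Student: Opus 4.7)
The plan is to count $\SL_2(\Z)$-orbits on $\U(\Sigma)^{\irr}_{J<X}$ using the geometry-of-numbers framework refined for Davenport--Heilbronn style problems and extended by Bhargava and collaborators to higher-degree representations. Since the Julia invariant $J$ is $\SL_2(\R)$-invariant and homogeneous of degree $1$ under scaling, it descends to a notion of size on $\SL_2(\R) \backslash U(\R)^{(r_2)}$ modulo homothety, and the set $\{f \in U(\R)^{(r_2)} : J(f) < X\}$ has Euclidean volume growing as $X^{n+1}$.

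First I would construct a fundamental set for the $\SL_2(\Z)$-action. Choose a measurable section $\Lambda \subset U(\R)^{(r_2)}$ so that $\SL_2(\R) \cdot \Lambda = U(\R)^{(r_2)}$; one may take $\Lambda$ to consist of forms whose root configuration lies in some normalized position (as in Julia's original paper). Let $\mathcal{F} \subset \SL_2(\R)$ be a Siegel-type fundamental domain for $\SL_2(\Z) \backslash \SL_2(\R)$, arising from the standard upper-half-plane domain and an Iwasawa decomposition $\SL_2(\R) = NAK$. Then $\mathcal{F} \cdot \Lambda_{J<X}$ is, up to generically trivial stabilizers for irreducible forms of degree $n \geq 3$, a fundamental set for $\SL_2(\Z)$ on $U(\R)^{(r_2)}_{J<X}$.

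Next I would apply Davenport's averaging trick: rather than counting integer points in $\mathcal{F} \cdot \Lambda_{J<X}$ directly, average the count $|g \mathcal{F} \cdot \Lambda_{J<X} \cap \U(\Sigma)^{\irr}|$ over $g$ in a compact subset $G_0 \subset \SL_2(\R)$ of unit measure. Swapping the order of integration expresses the orbit count as an integral over $\mathcal{F}$ of $U(\Z)$-point counts in translates of $G_0 \cdot \Lambda_{J<X}$. On the compact portion of $\mathcal{F}$, the integer-point count in each such translate equals its volume plus an error controlled by its boundary, by Davenport's lemma on bounded semialgebraic sets; integrating over $\mathcal{F}$ yields the main term $c_{n,r_2} X^{n+1}$.

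The main obstacle will be the cusp, where $\mathcal{F}$ stretches to infinity in the $A$-direction of $NAK$ and the simple boundary bound degenerates. Following the approach of \cite{BY-Julia}, I would slice the cuspidal region by the absolute value of the leading coefficient $a_0$: when $|a_0|$ is small enough relative to $X$, size constraints force the $\SL_2(\Z)$-reduced form in each slice to possess a rational linear factor and hence be reducible, so such forms are removed by the $\irr$ condition; the count of surviving irreducible forms in the remaining slices is then estimated directly and summed to yield the error term $O(X^{n+1-2/n})$. Finally, incorporating finitely many congruence conditions is handled by partitioning $U(\Z)$ into residue classes modulo the appropriate modulus $N$ determined by $\Sigma$ and applying the count to each class; summing the contributions yields the product $\prod_p \Vol(\Sigma_p)$ in the main term without worsening the error.
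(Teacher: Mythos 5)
Your proposal follows essentially the same strategy as the source this theorem is quoted from: the paper itself does not prove Theorem \ref{thjuliacount} but cites it as a rewording of Theorem 9 of \cite{BY-Julia}, and the one-paragraph sketch it gives of that proof (a fundamental domain $F$ with $F_{J<X}=XF_{J<1}$, Davenport-style averaging, showing the cusp contains negligibly many irreducible forms while the main body contains negligibly many reducible ones, and identifying the main term with $\Vol(F_{J<X})$) matches your outline point for point, including the treatment of the cusp via small leading coefficient forcing reducibility and the handling of congruence conditions by residue classes. Your argument is a faithful reconstruction of that approach and is correct in outline.
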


To prove Theorem \ref{thjuliacount}, the authors construct a
fundamental domain $F$ for the action of $\SL_2(\Z)$ on
$U(\R)^{(r_2)}$. This fundamental domain has the property that
$F_{J<X}=XF_{J<1}$. Estimating the number of irreducible integral
binary $n$-ic forms in $F_{J<X}$ is difficult because $F_{J<X}$ is not
compact and has a cusp going to infinity. Using an averaging
technique, they prove that the cuspidal region of $F_{J<X}$ contains
negligibly many irreducible integral binary $n$-ic forms, while
the non-cuspidal region has negligibly many reducible
binary $n$-ic forms. This allows them to prove that the left hand side
of \eqref{eqJuliafinitecount} is well approximated by the volume of
$F_{J<X}$, yielding the result. In fact, the constant $c_{n,k}$ in
Theorem \ref{thjuliacount} is simply $\Vol(F_{J<1})$. We now prove the
following theorem.
\begin{thm}\label{thbcfJcount}
Let $\Sigma$ be an acceptable $\SL_2$-invariant collection of local
specifications. Then we have
\begin{equation*}
\#(\SL_2(\Z)\backslash\U(\Sigma)_{J<X}^\irr)=
\Vol(\SL_2(\Z)\backslash\Sigma_{\infty,J<X})\prod_{p}\Vol(\Sigma_p)+o(X^{n+1}).
\end{equation*}
\end{thm}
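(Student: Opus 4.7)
The plan is to deduce Theorem \ref{thbcfJcount} from Theorem \ref{thjuliacount} in exactly the manner that Theorem \ref{thbcfHcount} follows from its finite-congruence analog via Proposition \ref{thunifbcf}: truncate the acceptable collection at large primes, apply the known result, and bound the truncation error by a uniform tail estimate. For each integer $M\geq 1$, define $\Sigma^{(M)}$ by $\Sigma^{(M)}_p=\Sigma_p$ for $p\leq M$, $\Sigma^{(M)}_p=U(\Z_p)$ for $p>M$, and $\Sigma^{(M)}_\infty=\Sigma_\infty=U(\R)^{(r_2)}$. This collection is $\SL_2$-invariant and is cut out by finitely many congruence conditions, so Theorem \ref{thjuliacount} applies and gives
\begin{equation*}
\#(\SL_2(\Z)\backslash\U(\Sigma^{(M)})^{\irr}_{J<X})
=c_{n,r_2}\prod_{p\leq M}\Vol(\Sigma_p)\,X^{n+1}+O_M(X^{n+1-2/n}).
\end{equation*}
Using the scaling property $F_{J<X}=XF_{J<1}$ of the Bhargava--Yang fundamental domain $F$, the leading term equals $\Vol(\SL_2(\Z)\backslash\Sigma_{\infty,J<X})\prod_{p\leq M}\Vol(\Sigma_p)$, matching the shape of the desired asymptotic.

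Next, since $\U(\Sigma)\subset\U(\Sigma^{(M)})$ and acceptability ensures that for all sufficiently large $M$ any $f\in\U(\Sigma^{(M)})\setminus\U(\Sigma)$ lies in $\bigcup_{p>M}W_p$ with $W_p=\{f\in U(\Z):p^2\mid\Delta(f)\}$, the theorem reduces to establishing the Julia-ordered analog of Proposition \ref{thunifbcf}:
\begin{equation*}
\#\Bigl(\SL_2(\Z)\backslash\bigl(\textstyle\bigcup_{p>M}W_p\bigr)^{\irr}_{J<X}\Bigr)
=O\Bigl(\frac{X^{n+1}}{\sqrt{M}}\Bigr)+o_M(X^{n+1}),
\end{equation*}
where the implied constant in the $O$-term is independent of $M$. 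Granted this, dividing by $X^{n+1}$, taking $\limsup$ and $\liminf$ as $X\to\infty$, and then letting $M\to\infty$, the convergent product $\prod_p\Vol(\Sigma_p)$ (whose convergence follows from acceptability and $\sum_p p^{-2}<\infty$) forces both one-sided limits to agree with the predicted main term, yielding the theorem with error $o(X^{n+1})$.

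The main obstacle is the uniform tail estimate above. My plan is to transport the argument of \cite{BSW-part2} from the height box $\{H<X\}$ to the $\SL_2(\Z)$-fundamental domain $F_{J<X}=XF_{J<1}$. On the non-cuspidal (compact) part of $F_{J<X}$, the invariants $H$ and $J$ are comparable, so the geometry-of-numbers / quantitative Ekedahl-sieve arguments of \cite{BSW-part2} apply verbatim to bound the number of lattice points in $W_p$ uniformly in $p$, producing the desired $X^{n+1}/\sqrt{M}$-type bound after summing over $p>M$ using $\sum_{p>M}p^{-2}=O(1/M)$ together with the standard splitting into the ranges $p\leq X^{1/2}$ and $p>X^{1/2}$ that yields the square root. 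In the cuspidal region of $F_{J<X}$, I would invoke the Bhargava--Yang averaging argument used in the proof of Theorem \ref{thjuliacount}: it already shows that irreducible integral forms in the cusp contribute only $o(X^{n+1})$, so the contribution from $\bigcup_{p>M}W_p$ in the cusp is a fortiori $o_M(X^{n+1})$. Combining the non-cuspidal and cuspidal estimates gives the required uniform tail bound, and the argument concludes as outlined.
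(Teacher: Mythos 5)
Your proposal is correct and follows essentially the same route as the paper: the upper bound comes from enlarging $\Sigma$ to a family cut out by finitely many congruence conditions and invoking Theorem \ref{thjuliacount}, while the lower bound rests on transporting Proposition \ref{thunifbcf} to the bounded truncation $F_{J<X}^{(\epsilon)}=XF_{J<1}^{(\epsilon)}$ of the Bhargava--Yang fundamental domain (where $H\ll_{\epsilon}X$, so the height-ordered tail estimate applies by containment) and absorbing the remainder via the cuspidal bound $\epsilon X^{n+1}+O(X^{n+1-2/n})$ from the proof of Theorem \ref{thjuliacount}. One small repair: for $p\leq M$ the sets $\Sigma_p$ are merely open with measure-zero boundary, so Theorem \ref{thjuliacount} does not literally apply to your $\Sigma^{(M)}$; you must first approximate each such $\Sigma_p$ from inside and outside by finite unions of congruence classes (as the paper does with its auxiliary collection $\Sigma'$), after which your truncation-and-tail argument, with the limits taken in the order $X\to\infty$, then $M\to\infty$, then $\epsilon\to 0$, goes through.
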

\begin{proof}
For every $\epsilon>0$ there exists an acceptable collection
$(\Sigma'_\nu)_\nu$ such that $\Sigma_\infty=\Sigma'_\infty$,
$\Sigma_p\subset\Sigma'_p$ for each prime $p$,
$\prod_p\Vol(\Sigma_p)\geq\prod_p\Vol(\Sigma'_p)-\epsilon$, and the
set $\U(\Sigma')$ is defined by finitely many congruence
conditions. From Theorem \ref{thjuliacount}, we obtain
\begin{equation*}
\begin{array}{rcl}
\#(\SL_2(\Z)\backslash\U(\Sigma)_{J<X}^\irr)&\leq&
\#(\SL_2(\Z)\backslash\U(\Sigma')_{J<X}^\irr)\\[.1in]
&=&\displaystyle\Vol(\SL_2(\Z)\backslash\Sigma_{\infty,J<X})
\prod_{p}\Vol(\Sigma'_p)
+o(X^{n+1})\\[.2in]
&\leq&\displaystyle\Vol(\SL_2(\Z)\backslash\Sigma_{\infty,J<X})
(\prod_{p}\Vol(\Sigma_p)+\epsilon)
+o(X^{n+1}).
\end{array}
\end{equation*}
Letting $\epsilon$ tend to $0$, we obtain the required upper bound on
$\#(\SL_2(\Z)\backslash\U(\Sigma)_{J<X}^\irr)$.

To obtain the lower bound, we proceed as follows. For $\epsilon>0$, we
take sets $F_{J<1}^{(\epsilon)}$ to be a semi-algebraic bounded subset
of $F_{J<1}$ such that $\Vol(F_{J<1}^{(\epsilon)})\geq
(1-\epsilon)\Vol(F_{J<1})$. We denote $XF_{J<1}^{(\epsilon)}$ by
$F_{J<X}^{(\epsilon)}$. Just as \cite[Theorem 2.21]{arulmanjul-bqcount}
follows from \cite[Theorem 2.13]{arulmanjul-bqcount}, we obtain from
Proposition \ref{thunifbcf} the estimate
\begin{equation}\label{eqtempjc1}
\#(F_{J<X}^{(\epsilon)}\cap \U(\Sigma)^\irr)=
\Vol(F_{J<X}^{(\epsilon)})\prod_{p}\Vol(\Sigma_p)+o(X^{n+1}).
\end{equation}
From the proof of \cite[Theorem 9]{BY-Julia}, we have the following
estimate on the number of integral elements in the ``cuspidal
region'':
\begin{equation}\label{eqtempjc2}
\#((F_{J<X}\backslash F_{J<X}^{(\epsilon)})\cap \U(\Sigma)^\irr)
\leq \epsilon X^{n+1}+O(X^{n+1-\frac2n}).
\end{equation}
Combining \eqref{eqtempjc1} and \eqref{eqtempjc2} yields the required
lower bound on $\#(\SL_2(\Z)\backslash\U(\Sigma)_{J<X}^\irr)$ and
completes the proof of Theorem \ref{thbcfJcount}.
\end{proof}

\section{Counting orbits of pairs of $n\times n$ symmetric matrices}\label{sec:counting}
The main goal of this section is to determine asymptotics for the
number of irreducible $\SL_n(\Z)$-orbits of pairs of $n \times n$
symmetric matrices having bounded height and the number of irreducible
$\SL_2(\Z) \times \SL_n(\Z)$-orbits of pairs of $n \times n$ symmetric
matrices having bounded Julia invariant. We first construct
fundamental domains for the action of $\SL_n(\bZ)$ and $\SL_2(\bZ)
\times \SL_n(\bZ)$ on pairs of real $n\times n$ symmetric matrices. We
then show that the cusps of these fundamental domains have a
negligible number of irreducible integral points. Additionally, we
show that the number of reducible integral points in the main body of
these fundamental domains is also negligible. A theorem of Davenport
\cite{davenport-volume1} allows us to conclude that the number of
irreducible integral points of bounded height in the fundamental
domain for the action of $\SL_n(\bZ)$ or the number of irreducible
integer points of bounded Julia invariant in the fundamental domain
for the action of $\SL_2 \times \SL_n(\bZ)$ is asymptotically equal to
the volumes of their respective main bodies.

 Fix an odd integer $n\geq 3$ and let $m = (n-1)/2$. Recall that $V(T) =T^2 \otimes \Sym_2(T^n)$ is the
space of pairs of $n\times n$ symmetric matrices $(A,B)$ over a ring $T$. The group
$G(T) :=\SL_2(T) \times \SL_n(T)$ acts on $V(T)$ via the action
\begin{equation}\label{eqGaction}
(\gamma_2,\gamma_n)\cdot(A,B)=
(\gamma_n A\gamma_n^t,\gamma_n B\gamma_n^t)\gamma_2^t \quad \quad \textrm{ for all } (\gamma_2,\gamma_n) \in G(T).
\end{equation}
It is easy to verify that we have
\begin{equation}\label{eqpiinvar}
\pi((\gamma_2,\gamma_n)\cdot(A,B))=\gamma_2^*(\pi(A,B)) \quad \quad \textrm{ for all }  (\gamma_2,\gamma_n) \in G(T),
\end{equation}
where
\begin{equation*}
\Bigl(
\begin{array}{cc}a&b\\c&d\end{array}
\Bigr)^*:=
\Bigl(
\begin{array}{cc}a&-c\\b&-d\end{array}
\Bigr).
\end{equation*}
The space $V(\R)$ inherits a height function $H$ and Julia invariant $J$ via $\pi$:
\begin{align*}
H(A,B)&:=H(\pi(A,B))\\
J(A,B)&:=J(\pi(A,B))
\end{align*}
where $H$ and $J$ are defined on $U(\R)$ as in \S \ref{sec:binarynics}.  From
\eqref{eqpiinvar}, it follows that $H$ is $\SL_n(\R)$-invariant and
$J$ is $G(\R)$-invariant on $V(\R)$.

We say that an element $(A,B)\in V(\Z)$ with $\pi(A,B)=f$ is {\it
  absolutely irreducible} if
\begin{itemize}
\item[{\rm (1)}] $f$ corresponds an order in an $S_n$-field, and
\item[{\rm (2)}] $(A,B)$ is not reducible in the sense of Theorem
  \ref{threducible}.
\end{itemize}
We denote the set of absolutely irreducible elements in $V(\Z)$ by
$V(\Z)^\irr$.

\subsection{Construction of fundamental domains} \label{sec:funddomains}
For $0\leq r_2 \leq m=(n-1)/2$, recall that $U(\R)^{(r_2)}$ denotes
the set of binary $n$-ic forms in $U(\R)$ that have nonzero
discriminant and $r_2$ distinct pairs of complex conjugate roots in
$\PP^1(\C)$. Let
$V(\R)^{(r_2)}$ denote the set of elements in $V(\R)$ whose image
under $\pi$ lies in $U(\R)^{(r_2)}$.  In this subsection, we construct
fundamental domains for the actions of $\SL_n(\Z)$ and $G(\Z)$ on
$V(\R)^{(r_2)}$ for $0 \leq r_2 \leq m$.

\subsubsection*{Fundamental sets for the action of $\SL_n(\R)$ and $G(\R)$ on $V(\R)^{(r_2)}$} 

Fix an integer $r_2$ with $0\leq r_2\leq m$, and let $r_1=n-2r_2$. For
$f \in U(\R)^{(r_2)}$, the $\R$-algebra $R_f$
corresponding to $f$ is isomorphic to
$\R^{r_1}\times\C^{r_2}$. Corollary \ref{cor:orbits} states that
the $\SL_n(\R)$-orbits of $\pi^{-1}(f)$ are in bijection with elements
$\delta \in (R_f^\times/R_f^{\times 2})_{N\equiv 1}$, which in turn is
in natural bijection with the subset $\T(r_2) \subset \{\pm1\}^{r_1}\times\{1\}^{r_2}$ of elements having an even number of $-1$ factors
(independent of the choice of $f\in U(\R)^{(r_2)}$).  
For an element $\delta\in\T(r_2)$, let
$V(\R)^{(r_2),\delta}$ denote the set of $v\in V(\R)^{(r_2)}$
such that $v$ corresponds to the pair $(R_{\pi(v)},\delta)$ under the
bijection of Theorem \ref{thm:bij}. It
follows that for $f\in U(\R)^{(r_2)}$ and $\delta\in\T(r_2)$, the set
$\pi^{-1}(f)\cap V(\R)^{(r_2),\delta}$ consists of a single
$\SL_n(\R)$-orbit.

Therefore, to construct a fundamental domain for the action of
$\SL_n(\R)$ on $V(\R)^{(r_2),\delta}$, it is enough to pick one
element $v_f\in V(\R)^{(r_2),\delta}$ for each $f\in
U(\R)^{(r_2)}$. However, we require our fundamental set to be
semialgebraic in order to apply our geometry-of-numbers techniques.

Below, we give such a section $s_\delta: U(\R)^{(r_2)} \to V(\R)$ for general $\delta$, which will be necessary
for constructing the fundamental sets, but first we describe, for the case of $\delta = (1,1,\ldots, 1)$, the very pretty explicit section $e: U(T)\rightarrow
V(T)$ of $\pi$ for any ring $T$. When $T=\R$, it is easy to check that
$e(f)\in V(\R)^{(r_2),\delta}$ for $f\in U(\R)^{(r_2)}$.  For $n = 3$,
the section $e$ takes a binary cubic form $f(x,y) = f_0x^3 + f_1x^2y
+f_2xy^2 + f_3y^3$ to the pair
       $$\left(\left(\begin{array}{ccc}0 & 0 & 1 \\0 & -f_0 & 0 \\1 & 0 & -f_2\end{array}\right),\left(\begin{array}{ccc}0 & 1 & 0 \\1 & f_1 & 0 \\0 & 0 & f_3\end{array}\right)\right).$$
For $n = 5$, the map $e$ sends a binary quintic form $f(x,y) = f_0x^5 + f_1x^4y + f_2x^3y^2 + f_3x^2y^3 + f_4xy^4 + f_5y^5$ to
       $$\left(\left(\begin{array}{ccccc}0 & 0 & 0 & 0 & 1 \\0 & 0 & 0 & 1 & 0 \\0 & 0 & f_0 & 0 & 0 \\0 & 1 & 0 & f_2 & 0 \\1 & 0 & 0 & 0 & f_4\end{array}\right),\left(\begin{array}{ccccc}0 & 0 & 0 & 1 & 0 \\0 & 0 & 1 & 0 & 0 \\0 & 1 & -f_1 & 0 & 0 \\1 & 0 & 0 & -f_3 & 0 \\0 & 0 & 0 & 0 & -f_5\end{array}\right)\right).$$
For general $n$, a binary $n$-ic form $f(x,y) = f_0x^n + f_1x^{n-1}y+f_2x^{n-2}y^2 + \cdots + f_ny^n$ is mapped under $e$ to $((a_{ij}),(b_{ij}))$ where:
\begin{align*} 
&\bullet\ a_{k,n-k} = 1 \textrm{ for } 1 \leq k < \tfrac{n-1}{2} \textrm{ or } \tfrac{n-1}{2} < k < n
&& \bullet\ b_{k,n-1-k} = 1 \textrm{ for } 1 \leq k < n  \\
&\bullet\ a_{\frac{n-1}{2}+k,\frac{n-1}{2}+k} = (-1)^{\frac{n-1}{2}} f_{2k} \textrm{ for } 0 \leq k \leq \tfrac{n-1}{2}
&& \bullet\ b_{\frac{n-1}{2}+k,\frac{n-1}{2}+k} = (-1)^{\frac{n+1}{2}} f_{2k+1} \textrm{ for } 0 \leq k \leq \tfrac{n-1}{2} \nonumber \\
&\bullet\ a_{ij} = 0 \textrm{ otherwise } && \bullet\ b_{ij} = 0 \textrm{ otherwise.} \nonumber
\end{align*}

We now handle the case of general $\delta$. For a fixed $\delta \in
\T(r_2)$ and an element $f = f_0 x^n + \cdots + f_n y^n \in
U(\R)^{(r_2)}$ with $f_0\neq 0$, consider the pair
$(R_f,\delta)$. Given the basis $\langle
1,\theta,\ldots,\theta^{n-1}\rangle$ for $R_f$, the corresponding pair
$(A,B)$ may be written explicitly using \eqref{abstractvarphi} and
\eqref{explicitvarphi}. From the definitions of $\theta$ and $\delta$,
it follows that $\phi(\theta^i\otimes\theta^j)$ may be written as
polynomials of degree less than $n$ in $\theta$, whose coefficients
are polynomials in the $f_i$ and $1/f_0$. Since $\zeta_{n-2}$ and
$\zeta_{n-1}$ are polynomials in $\theta$ both with leading
coefficient $f_0$, the coefficients of $A$ and $B$ are polynomials in
the $f_i$ and $1/f_0$. We define the function
$s_\delta:U(\R)^{(r_2)}\to V(\R)$ by sending such a binary $n$-ic form
$f$ to this pair $(A,B)$.

We now have the following lemma:
\begin{lemma}
Let $S\subset U(\R)$ be a compact semialgebraic set that does not
contain $0$. Then there exists a finite subset $T\subset\SO_2(\R)$ and
semialgebraic subsets $S_\tt\subset S$ for each $\tt\in T$, such
that the leading coefficients of $\tt\cdot f$ are bounded away from
$0$ independent of $f\in S_\tt$, and that the union of the
$S_\tt$ is $S$.
\end{lemma}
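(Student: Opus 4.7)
The plan is to reduce the statement to a routine compactness argument. The key observation is that for any $\tt \in \SO_2(\R)$, the leading coefficient of $\tt \cdot f$ equals $(\tt \cdot f)(1,0) = f((1,0)\tt)$, i.e., the value of $f$ at a point on the unit circle in $\R^2$. Since a nonzero binary $n$-ic form has at most $n$ projective zeros, no $f \in S$ vanishes identically on the unit circle, so for every $f \in S$ there exists some $\tt$ with $f((1,0)\tt) \neq 0$.

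I would then set $M(f) := \max_{\tt \in \SO_2(\R)} |f((1,0)\tt)|$. The function $M$ is continuous (by joint continuity of $(f,\tt) \mapsto f((1,0)\tt)$ together with compactness of $\SO_2(\R)$) and strictly positive on $S$ by the observation above; since $S$ is compact, $M$ attains a positive minimum $c > 0$. For each $\tt \in \SO_2(\R)$, define $U_\tt := \{f \in S : |f((1,0)\tt)| > c/2\}$, which is open in $S$. By definition of $M$ and the lower bound $M(f) \geq c$, the collection $\{U_\tt\}_{\tt \in \SO_2(\R)}$ covers $S$, so by compactness of $S$ we may extract a finite subcover indexed by $T = \{\tt_1, \ldots, \tt_k\} \subset \SO_2(\R)$. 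Taking $S_{\tt_i} := \{f \in S : |f((1,0)\tt_i)| \geq c/2\}$ gives semialgebraic subsets of $S$ (each cut out by a single polynomial inequality in the coefficients of $f$, since the entries of $\tt_i$ and the constant $c$ are fixed real numbers) whose union contains $\bigcup_i U_{\tt_i} = S$; on each $S_{\tt_i}$ the leading coefficient of $\tt_i \cdot f$ has absolute value at least $c/2$, uniformly in $f \in S_{\tt_i}$.

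There is no substantive obstacle here: the entire argument is a compactness and continuity exercise, and the semialgebraicity of the $S_{\tt_i}$ is automatic because the cutoff condition is polynomial in the coefficients of $f$. The hypothesis that $S$ is compact and excludes $0$ is precisely what is needed to guarantee the uniform lower bound $c > 0$; without compactness one would only obtain pointwise positivity of $M$, and without excluding $0$ from $S$ the form $f = 0$ would give $M(f) = 0$, breaking the argument.
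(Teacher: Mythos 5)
Your proof is correct and follows essentially the same route as the paper: both arguments identify the leading coefficient of $\tt\cdot f$ with the value of $f$ at a point of the unit circle, use continuity of the maximum over the circle together with compactness of $S$ to obtain a uniform lower bound, and then extract a finite cover by semialgebraic sets cut out by a single polynomial inequality. Your version is marginally more careful at the finite-subcover step (covering by the open sets $U_\tt$ and then passing to the closed $S_{\tt_i}$), but this is a refinement of the paper's argument, not a different one.
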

\begin{proof}
The set $\tilde{S}=S\times\{(x,y):x^2+y^2=1\}\subset U(\R)\times\R^2$
is semialgebraic. The function $S\to\R_{\geq 0}$ given
by $$f\mapsto\max_{x^2+y^2=1}|f(x,y)|$$ is continuous and nonzero. Hence
its image is bounded away from $0$ by some $\epsilon>0$. Therefore, the set
\begin{equation*}
S_1:=\{(f,(x,y)):f\in S,\;(x,y)\in\R^2,\;x^2+y^2=1,\;|f(x,y)|>\epsilon/2\}
\end{equation*}
is semialgebraic and its projection to $S$ is all of $S$.  Given an
element $\lambda=(x,y)\in\R^2$ with $x^2+y^2=1$, let $S_\lambda$
denote the set of elements $f$ in $S$ such that $(f,\lambda)\in
S_1$. Since the projections of semialgebraic sets are semialgebraic,
it follows that $S_\lambda$ is semialgebraic. Since $S$ is compact,
and the $S_\lambda$ are open inside $S$, there exists a finite subset
$T'$ of $\{(x,y)\in\R^2:x^2+y^2=1\}$ such that the union of
$S_\lambda$ over all $\lambda$ in this finite set is $S$. Given
$\lambda=(x,y)$, choose $\tt\in\SO_2(\R)$ to be the matrix
$\Bigl(\begin{array}{cc}\cos t&-\sin t\\\sin t & \cos t
\end{array}\Bigr)$, where $\cos t=x$ and $\sin t=y$. 
The leading coefficient of $\tt\cdot f$ is $(\tt\cdot
f)(1,0)=f(x,y)>\epsilon/2$.  The lemma follows by taking $T$ to be
the finite set of matrices $\tt$ in $\SO_2(\R)$ corresponding to
the finite set $T'$ of pairs $\lambda=(x,y)$ in $\R^2$, and setting
$S_\tt$ to be $S_\lambda$, for $\tt$ corresponding to $\lambda$.
\end{proof}

We can clearly choose the sets $S_\tt$ to be disjoint in the above
lemma. The set $S=U(\R)_{H=1}$ satisfies the conditions of the above
lemma. For a fixed $r_2$, we may write $U(\R)^{(r_2)}_{H=1}$ as a
finite disjoint union of the sets $S_\tt^{(r_2)}=S_\tt\cap
U(\R)^{(r_2)}$.  We now take our fundamental set for the action of
$\SL_n(\R)$ on $V(\R)^{(r_2),\delta}$ to be the finite union
\begin{equation*}
\RR^{(r_2),\delta}_H:=\bigcup_{\tt}\R_{>0}\cdot
(\tt^*)^{-1}s_\delta(\tt\cdot S_\tt^{(r_2)}).
\end{equation*}

We define a fundamental set $\RR^{(r_2),\delta}_J$ for the action of
$G(\R)$ on $V(\R)^{(r_2),\delta}$ in exactly the same way by
considering the set $S=L_n$, where $L_n$ is constructed in
\cite[\S3]{BY-Julia} to be a semialgebraic bounded fundamental set for
the action of $\SL_2(\R)$ on the set of elements in $U(\R)$ having
Julia invariant $1$.

Let $\RR^{(r_2),\delta}_H(X)$ (resp., $\RR^{(r_2),\delta}_J(X)$)
denote the set of elements in $\RR^{(r_2),\delta}_H$
(resp., $\RR^{(r_2),\delta}_J$) having height (resp., Julia invariant)
bounded by $X$. The sets $(\tt^*)^{-1}s_\delta(\tt\cdot
S_\tt^{(r_2)})$ are bounded for $S=U(\R)_{H=1}$ and $S=L_n$ because
every $f\in \tt\cdot S_\tt$ has bounded coefficients and has
leading coefficient bounded away from $0$. Since both height
and Julia invariant on $V(\R)$ have degree $n$, the
coefficients of elements $(A,B)$ in $\RR_H^{(r_2),\delta}(X)$ and
$\RR_J^{(r_2),\delta}(X)$ are bounded by $O(X^{1/n})$, where the
implied constant is independent of $(A,B)$.

\subsubsection*{Fundamental domains for $\SL_n(\Z)\backslash \SL_n(\R)$ 
and $G(\Z)\backslash G(\R)$}

Let $\SL_n(\R)=N_nT_nK_n$ be the Iwasawa decomposition of $\SL_n(\R)$,
where $N_n\subset \SL_n(\R)$ denotes the set of unipotent lower
triangular matrices, $T_n\subset \SL_n(\R)$ denotes the set of
diagonal matrices, and $K_n =\rm{SO}_n(\R) \subset \SL_n(\R)$ is the
maximal compact subgroup. Let $\Si_H$ be a Siegel domain in
$\SL_n(\R)$ defined as
$$
\Si_H:=N_n'T_n'K_n,
$$ where $N_n'\subset N_n$ is the set of elements in $N_n$ whose
coefficients are bounded by $1$ in absolute value and $T_n'\subset
T_n$ is given by
$$ T_n':=\{{\rm
  diag}(t_1^{-1},t_2^{-1},\ldots,t_{n}^{-1}):t_1/t_2>c,\ldots,t_{n-1}/t_{n}>c\},
$$ for some constant $c>0$ that is sufficiently small to ensure the
existence of a fundamental domain $\FF_H$ for the action of $\SL_n(\Z)$
on $\SL_n(\R)$ that is contained in $\Si_H$.

Next, we pick $N_2'\subset N$ to be the set of elements whose
coefficients are bounded by $1$ in absolute value and $T_2'\subset
T_2$ to be the set
$$ T_2':=\{{\rm
  diag}(t^{-1},t):t>1/4\}.
$$
Let
$$
\Si_J:=(N_2',N_n')(T_2',T_n')(K_2,K_n)
$$ be a Siegel domain. Then $\Si_J$ contains a fundamental domain
$\FF_J$ for the action of $G(\Z)$ on $G(\R)$.

\subsubsection*{Fundamental domains for the action 
of $\SL_n(\Z)$ on $V(\R)^{(r_2)}$} 

The size of the stabilizer in $\SL_n(\R)$ of $v\in
V(\R)^{(r_2,\delta)}$ can be computed from Corollary
\ref{cor:stabs}. This size depends only on $r_2$ and we denote it by
$\sigma(r_2)$. It is well known that the size of the stabilizer in
$\SL_2(\R)$ of a generic element $f\in U(\R)^{(r_2)}$ is $3$ if $n=3$
and $r_2=0$, and $1$ otherwise. It follows that the size of the
stabilizer in $G(\R)$ of a generic element in $V(\R)^{(r_2),\delta}$
is $\sigma'(r_2)$, where $\sigma'(r_2)=3\sigma(r_2)$ if $n=3$ and
$r_2=0$ and $\sigma'(r_2)=\sigma(r_2)$ otherwise. By arguments
identical to those in \cite[\S2.1]{arulmanjul-bqcount}, we see that
$\FF_H\cdot\RR_H^{(r_2),\delta}$ is a $\sigma(r_2)$-fold cover of a
fundamental domain for the action of $\SL_n(\Z)$ on
$V(\R)^{(r_2),\delta}$ and that $\FF_J\cdot\RR_J^{(r_2),\delta}$ is a
$\sigma'(r_2)$-fold cover of a fundamental domain for the action of
$G(\Z)$ on $V(\R)^{(r_2),\delta}$, where
$\FF_H\cdot\RR_H^{(r_2),\delta}$ and $\FF_J\cdot\RR_J^{(r_2),\delta}$
are regarded as multisets. More precisely, the $\SL_n(\Z)$-orbit of
any $v\in V(\R)^{(r_2),\delta}$ is represented
$\#\Stab_{\SL_n(\R)}(v)/\#\Stab_{\SL_n(\Z)}(v)$ times in
$\FF_H\cdot\RR_H^{(r_2),\delta}$, with the analogous statement also
holding for the multiset $\FF_J\cdot\RR_J^{(r_2),\delta}$.

For an $\SL_n(\Z)$-invariant set $S \subset
V(\Z)^{(r_2),\delta}:=V(\R)^{(r_2),\delta}\cap V(\Z)$, let $N_H(S;X)$
denote the number of absolutely irreducible $\SL_n(\Z)$-orbits on $S$
that have height bounded by $X$. For a $G(\Z)$-invariant set $S'
\subset V(\Z)^{(r_2),\delta}$, let $N_J(S';X)$ denote the number of
absolutely irreducible $G(\Z)$-orbits on $S'$ whose Julia invariant is
bounded by $X$.  Let $v\in V(\Z)$ be absolutely irreducible with
resolvent form $f$. Then $f$ corresponds to an order $\cO$ in an
$S_n$-number field and $\cO^\times[2]_{N\equiv 1}$ is
trivial. Furthermore, $f$ has trivial stabilizer in $\SL_2(\Z)$ since
$\Aut(\cO)$ is trivial. Therefore, $v$ has trivial stabilizer in
$\SL_n(\Z)$ and $G(\Z)$.  For any set $L\subset V(\Z)$, let $L^\irr$
denote the set of absolutely irreducible elements in $L$. Let
$\RR_H^{(r_2),\delta}(X)$ (respectively, $\RR_J^{(r_2),\delta}(X)$) denote
the set of elements in $\RR_H^{(r_2)}$ (resp., $\RR_J^{(r_2)}$) having
height (resp., Julia invariant) bounded by $X$. Then we have the
following:
\begin{prop}\label{propfdcount}
Let notation be as above. We have
\begin{equation}\label{eqnsx1}
\begin{array}{rcl}
\displaystyle N_H(S;X)&=&
\displaystyle\frac{1}{\sigma(r_2)}\#\{\FF_H\RR_H^{(r_2),\delta}(X)\cap S^\irr\};\\[.2in]
\displaystyle N_J(S';X)&=&
\displaystyle\frac{1}{\sigma'(r_2)}\#\{\FF_J\RR_J^{(r_2),\delta}(X)\cap S'^\irr\}.
\end{array}
\end{equation}
\end{prop}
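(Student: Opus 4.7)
The plan is to combine two ingredients already assembled in the preceding subsection: the fundamental domain construction and the triviality of the stabilizer of an absolutely irreducible element. First, as noted in the paragraph immediately preceding the proposition, the multiset $\FF_H\cdot\RR_H^{(r_2),\delta}$ is a $\sigma(r_2)$-fold cover of a fundamental domain for the action of $\SL_n(\Z)$ on $V(\R)^{(r_2),\delta}$: every $v$ in this fundamental domain appears with multiplicity $\#\Stab_{\SL_n(\R)}(v)/\#\Stab_{\SL_n(\Z)}(v)$, whose numerator equals $\sigma(r_2)$ uniformly on $V(\R)^{(r_2),\delta}$ by Corollary \ref{cor:stabs} applied to the base ring $T = \R$. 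The analogous statement holds for $\FF_J\cdot\RR_J^{(r_2),\delta}$ with $\sigma'(r_2)$ in place of $\sigma(r_2)$, where the extra factor of $3$ in the cubic totally real case comes from the nontrivial $\SL_2(\R)$-stabilizer of a generic real binary cubic form with three real roots.

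Second, I would invoke the stabilizer computation supplied just before the proposition: if $v \in V(\Z)$ is absolutely irreducible with resolvent form $f$, then $f$ corresponds to an order $\cO$ in an $S_n$-field, so $\Aut(\cO)$ is trivial (forcing the $\SL_2(\Z)$-stabilizer of $f$ to be trivial) and $\cO^\times[2]_{N\equiv 1}$ is trivial, which via Corollary \ref{cor:stabs} kills the $\SL_n(\Z)$-stabilizer of $v$. Hence every absolutely irreducible $\SL_n(\Z)$- or $G(\Z)$-orbit meeting $S$ or $S'$ has trivial integral stabilizer, and each such orbit of bounded height (resp.\ Julia invariant) therefore contributes exactly $\sigma(r_2)$ (resp.\ $\sigma'(r_2)$) representatives to the multiset $\FF_H\RR_H^{(r_2),\delta}(X) \cap S^\irr$ (resp.\ $\FF_J\RR_J^{(r_2),\delta}(X) \cap S'^\irr$). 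Dividing by this constant multiplicity yields both equalities of \eqref{eqnsx1}.

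The only step needing a brief check is the compatibility of the truncation with the cover structure. Since $H$ is $\SL_n(\R)$-invariant on $V(\R)$ by the remark preceding the definition of $V(\Z)^\irr$, and $J$ is $G(\R)$-invariant for the same reason, the height (resp.\ Julia) bound on $v$ coincides with that on any $\FF_H$- (resp.\ $\FF_J$-) translate. Hence the truncation to height or Julia at most $X$ can be imposed either before or after multiplying by the Siegel-type fundamental domain, producing the same multiset. I do not anticipate any serious obstacle here: the proposition is essentially a bookkeeping consequence of the cover-degree computation of \S\ref{sec:funddomains} together with triviality of the stabilizer on $V(\Z)^\irr$.
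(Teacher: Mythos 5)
Your proposal is correct and follows exactly the route the paper takes: the paper gives no separate proof of Proposition \ref{propfdcount} because it is the immediate consequence of the two facts established in the preceding paragraphs, namely that the multiset $\FF_H\cdot\RR_H^{(r_2),\delta}$ (resp.\ $\FF_J\cdot\RR_J^{(r_2),\delta}$) represents each orbit $\#\Stab_{\SL_n(\R)}(v)/\#\Stab_{\SL_n(\Z)}(v)$ times with numerator uniformly $\sigma(r_2)$ (resp.\ $\sigma'(r_2)$), and that absolutely irreducible integral elements have trivial stabilizer in $\SL_n(\Z)$ and $G(\Z)$. Your additional check that the height and Julia truncations commute with the cover structure is a harmless elaboration of the same bookkeeping.
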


\subsection{Averaging and cutting off the cusp}
Let $G_0$ (respectively, $G_0'$) be a bounded open nonempty $K_n$-invariant
(resp., $K_2\times K_n$-invariant) set in $\SL_n(\R)$
(resp., $G(\R)$).  We abuse notation and refer to Haar measures in
both groups $\SL_n(\R)$ and $G(\R)$ by $\dh$.  From Proposition
\ref{propfdcount} and by an argument identical to the proof of
\cite[Theorem 2.5]{arulmanjul-bqcount}, we obtain
\begin{equation}\label{eqavgtwo}
\begin{array}{rcl}
N_H(S;X)&=&
\displaystyle\frac{1}{\sigma(r_2)\Vol(G_0)}
\displaystyle\int_{h\in\FF_H}\#\{hG_0\cdot
\RR_H^{(r_2),\delta}(X)\cap S^\irr\} \dh, \textrm{ and}\\[.2in]
N_J(S';X)&=&
\displaystyle\frac{1}{\sigma'(r_2)\Vol(G_0')}
\displaystyle\int_{h\in\FF_J}\#\{hG_0'\cdot
\RR_J^{(r_2),\delta}(X)\cap S'^\irr\} \dh,
\end{array}
\end{equation}
where the volumes of $G_0$ and $G_0'$ are computed with respect to
$\dh$. We use \eqref{eqavgtwo} to define $N_H(S;X)$
(resp.\ $N_J(S';X)$) even when $S$ (resp.\ $S'$) is not
$\SL_n(\Z)$-invariant (resp.\ $G(\Z)$-invariant).

Let $\FF_H'\subset \FF_H$ and $\FF_J'\subset \FF_J$ denote the sets of
elements $\gamma\in\FF_H$ and $\gamma\in\FF_J$ such that
$|a_{11}(v)|<1$ for every element $v\in\gamma\cdot G_0
\RR_H^{(r_2),\delta}(X)$ and $v\in\gamma\cdot G_0'
\RR_J^{(r_2),\delta}(X)$, respectively. We will refer to the integrals
of the integrands in \eqref{eqavgtwo} over $\FF_H'$ and $\FF_J'$ as
the ``cuspidal'' part of the integral, and to the integrals over
$\FF_H\setminus\FF_H'$ and $\FF_J\setminus\FF_J'$ as the ``main
body'' of the integral.

\subsubsection*{Absolutely irreducible points in the cusp}

We will prove that the number of absolutely irreducible integral
points in the cusp is negligible:
\begin{prop}\label{propirredcusp}
We have
\begin{align*}
\displaystyle\int_{h\in\FF_H'}\#\{hG_0\cdot
\RR_H^{(r_2),\delta}(X)\cap V(\Z)^\irr\} \dh
&=O(X^{n+1-\frac{1}{n}}), \textrm{ and}\\
\displaystyle\int_{h\in\FF_J'}\#\{hG_0'\cdot
\RR_J^{(r_2),\delta}(X)\cap V(\Z)^\irr\} \dh&=O(X^{n+1-\frac{1}{n}}).
\end{align*}
\end{prop}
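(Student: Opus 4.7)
The plan is to work in Iwasawa coordinates for the Siegel domain $\Si_H$, identify which coefficients of a pair $(A,B)$ are forced to vanish when $(A,B)$ lies in $hG_0\RR_H^{(r_2),\delta}(X)$ for $h$ deep in the cusp $\FF_H'$, and show that such forced vanishing either makes $(A,B)$ reducible (so it contributes nothing to the absolutely irreducible count) or provides enough saving in a Davenport-style volume estimate to yield $O(X^{n+1-\frac{1}{n}})$. I describe the height integral in detail; the Julia integral is handled analogously with the additional $\SL_2(\Z)$-cusp treated by the same technique.

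\textbf{Coefficient scaling in the Siegel domain.}
Writing $h \in \Si_H$ as $h = n_h t k$ with $t = \mathrm{diag}(t_1^{-1},\ldots,t_n^{-1})$, $\prod_i t_i = 1$, and $t_i/t_{i+1} \geq c$, the element $t$ rescales the $(i,j)$-coefficient of both $A$ and $B$ by $(t_i t_j)^{-1}$. Since the coefficients of every element of $\RR_H^{(r_2),\delta}(X)$ are bounded by $O(X^{1/n})$ (as established in \S\ref{sec:funddomains}), the $(i,j)$-coefficients of points in $hG_0\RR_H^{(r_2),\delta}(X)$ are $O((t_i t_j)^{-1} X^{1/n})$, uniformly in the unipotent and compact parts $n_h$ and $k$. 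Hence any integer point in this region must have $a_{ij} = b_{ij} = 0$ whenever $t_i t_j > C X^{1/n}$ for a suitable constant $C$, and the defining condition $|a_{11}|<1$ of $\FF_H'$ translates to $t_1^2 \gtrsim X^{1/n}$.

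\textbf{Reducibility-forcing strata and Davenport estimates.}
Stratify $\FF_H'$ by the symmetric subset $U \subseteq \{1,\ldots,n\}^2$ of pairs for which $t_i t_j > C X^{1/n}$, and let $m = (n-1)/2$. If $U \supseteq \{1,\ldots,m\}^2$, then every integer point in $hG_0\RR_H^{(r_2),\delta}(X)$ has $a_{ij} = b_{ij} = 0$ for $1 \leq i, j \leq m$, so the span of $e_1,\ldots,e_m$ is a common isotropic subspace of linear dimension $m$, i.e., projective dimension $(n-3)/2$, for both quadrics; by the definition in \S\ref{sec:reducible}, such $(A,B)$ is reducible, hence not in $V(\Z)^\irr$, and the stratum contributes zero. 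For each remaining $U \not\supseteq \{1,\ldots,m\}^2$, there is an index pair $(i_0, j_0) \in \{1,\ldots,m\}^2 \setminus U$, which yields the extra torus inequality $t_{i_0} t_{j_0} \leq C X^{1/n}$ on $\FF_H^{(U)}$. Davenport's lemma bounds the number of integer points in $hG_0\RR_H^{(r_2),\delta}(X)$ with vanishing pattern containing $U$ by $\prod_{(i,j)\notin U} O\bigl(1 + (t_i t_j)^{-1} X^{1/n}\bigr)$, and integrating this bound over $\FF_H^{(U)}$ against the Iwasawa-decomposed Haar measure $\dh$ on $\SL_n(\R)$---using the constraints $\prod t_i = 1$, $t_i/t_{i+1} \geq c$, $t_1^2 \gtrsim X^{1/n}$, and $t_{i_0} t_{j_0} \leq C X^{1/n}$---produces the bound $O(X^{n+1-\frac{1}{n}})$ for that stratum. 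Summing over the finitely many $U$ gives the required height estimate. The Julia case follows by the parallel stratification of $\FF_J'$, with the $\SL_2(\Z)$-Siegel piece handled as in \cite{BY-Julia}.

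\textbf{Main obstacle.}
The principal difficulty is the stratum-by-stratum bookkeeping in the Haar measure integration: one must verify that across all $U \not\supseteq \{1,\ldots,m\}^2$, the modular factor of Haar measure on the $\SL_n$ Siegel torus, combined with the single non-reducibility inequality $t_{i_0} t_{j_0} \leq C X^{1/n}$, yields a uniform $X^{1/n}$ saving over the main-body bound of $X^{n+1}$, tracking the $\SL_n$ modular weights against the $2\binom{n+1}{2}$ coefficient scales. For the Julia variant, this analysis must be extended to the compound $\SL_2\times\SL_n$-torus, and one must check that the $\SL_2$-cusp on the binary $n$-ic form side does not degrade the saving, so that the same exponent $n+1-\frac{1}{n}$ emerges.
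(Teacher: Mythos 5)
Your overall framework---Iwasawa coordinates, stratification of the cusp by the set of coefficients forced to vanish, disposal of the deepest strata via reducibility, and a Davenport-plus-Haar-measure estimate for the rest---is the same as the paper's, which follows \cite[\S3]{5sel}. But there are two problems. First, your dichotomy uses only one reducibility criterion: either $U\supseteq\{1,\ldots,m\}^2$ (common isotropic subspace of the right dimension, hence reducible) or you fall back on the counting estimate. The paper's Lemma \ref{lemredcond} has a second, independent criterion: if all of $\{a_{ij},b_{ij}:1\leq i\leq k,\;1\leq j\leq n-k\}$ vanish for some $k$, then the resolvent form acquires a repeated factor, so $\Delta(f)=0$ and the point is not absolutely irreducible. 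This second criterion is genuinely needed: for $n\geq 5$ there are cuspidal regions (e.g.\ $t_1$ very large with $t_2,\ldots,t_n$ comparable) where the entire first row of $A$ and $B$ except the last entries is forced to vanish while $(2,2)\notin U$, so your isotropic-subspace test does not fire. The paper's lemma following Lemma \ref{lemredcond} shows that any vanishing set closed under the weight ordering and compatible with absolute irreducibility must lie inside the explicit set $\var_0$, and $\var_0$ is carved out using \emph{both} parts of Lemma \ref{lemredcond}; your strata that violate only part (a) would instead have to be pushed through the Davenport estimate, and it is not shown that the single inequality $t_{i_0}t_{j_0}\leq CX^{1/n}$ yields the required saving uniformly there.

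Second, even for the strata you do propose to bound, the assertion that the product bound $\prod_{(i,j)\notin U}O\bigl(1+(t_it_j)^{-1}X^{1/n}\bigr)$ integrates against $h_H$ to $O(X^{n+1-\frac1n})$ is exactly the hard part, and you defer it as the ``main obstacle'' rather than prove it. In the paper this verification is carried out once and for all by exhibiting the explicit pairing function $\psi:\var_0\setminus\{a_{11}\}\to\var\setminus\var_0$ of \eqref{eqtheta} satisfying $\alpha\lesssim\psi(\alpha)$ and by computing that $w_J\bigl(\prod_{\alpha\in\var_0}\alpha^{-1}\psi(\alpha)\bigr)\cdot h_J=t^{-1}\prod_{k=1}^{m}s_k^{-2k}\prod_{k=m+1}^{n-1}s_k^{-2(k-m)+1}$ is a product of strictly negative powers; the unmatched variable $a_{11}$, whose vanishing defines $\FF_H'$, is what produces the $X^{-1/n}$ saving. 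Without constructing such a $\psi$ (or an equivalent bookkeeping device) and checking the exponents, the claimed bound is not established. So the proposal is a correct outline of the standard strategy but contains a genuine gap at the reducibility step and leaves the decisive computation undone.
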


First, we list sufficient conditions to guarantee that an element
$(A,B)\in V(\Z)$ is not absolutely irreducible:
\begin{lemma}\label{lemredcond}
Let $(A,B)\in V(\Z)$ be such that all the variables in one of the following sets vanish:
\begin{enumerate}[label=\textnormal{(\alph*)}]
\item $\{a_{ij},b_{ij}:1\leq i\leq k,\;1\leq j\leq n-k\}$ for some $1\leq k\leq n-1$. \label{redcond1}
\item $\{a_{ij},b_{ij}:1\leq i,j\leq (n-1)/2\}$. \label{redcond2}
  \end{enumerate}
Then $(A,B)$ is not absolutely irreducible.
\end{lemma}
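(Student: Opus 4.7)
The plan is to verify each case directly from the definitions by exhibiting a failure of one of the two conditions (irreducible $S_n$-resolvent form, or non-reducibility in the sense of Theorem \ref{threducible}) that together define absolute irreducibility. In both cases, the work reduces to linear algebra about the block structure of $(A,B)$ and the matrix $Ax - By$.

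For case (a), I would exploit the block structure of $Ax-By$. The vanishing hypothesis says the top $k$ rows of $Ax-By$ have zero entries in the first $n-k$ columns, so partitioning the rows into blocks of sizes $(k, n-k)$ and the columns into blocks of sizes $(n-k, k)$ presents $Ax-By$ in the form
\begin{equation*}
Ax-By = \begin{pmatrix} 0_{k \times (n-k)} & M_1(x,y) \\ M_2(x,y) & N(x,y) \end{pmatrix},
\end{equation*}
where $M_1$ is a $k \times k$ block and $M_2$ is an $(n-k) \times (n-k)$ block, each with entries linear in $(x,y)$. The standard block determinant formula yields $f(x,y) = \det(Ax-By) = \pm \det(M_1(x,y)) \cdot \det(M_2(x,y))$, exhibiting $f$ as a product of two homogeneous polynomials over $\Z$ of positive degrees $k$ and $n-k$. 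Since $1 \leq k \leq n-1$ guarantees both factors are nonconstant, $f$ is reducible over $\Q$ and so cannot correspond to an order in an $S_n$-field, violating condition (1) of absolute irreducibility.

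For case (b), the hypothesis says exactly that the standard linear subspace $W := \mathrm{span}(e_1, \ldots, e_{(n-1)/2}) \subset \Q^n$ is totally isotropic for both quadratic forms $A$ and $B$. The projectivization of $W$ is a projective subspace of $\P^{n-1}(\Q)$ of dimension $(n-3)/2$ that is common isotropic for the quadrics defined by $A$ and $B$, which is precisely the definition of reducibility preceding Theorem \ref{threducible}. Hence $(A,B)$ is reducible, violating condition (2) of absolute irreducibility.

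I do not anticipate any real obstacle: the only point worth double-checking is that in case (a) both ranges of indices are nonempty (guaranteed by $1 \leq k \leq n-1$) so that the factorization is genuinely nontrivial, and that in case (b) the dimension count matches the one in the definition of reducibility. Both are immediate.
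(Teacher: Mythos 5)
Your proof is correct and follows essentially the same route as the paper: in case (a) both arguments exploit the $k\times(n-k)$ zero block to degenerate the resolvent $\det(Ax-By)$, and in case (b) both simply observe that the vanishing of the $\frac{n-1}{2}\times\frac{n-1}{2}$ block is literally the definition of reducibility. The only divergence is in case (a), where the paper asserts that $f$ acquires a \emph{repeated} factor (hence vanishing discriminant, which one can see by iterating your block factorization and using the symmetry of $A$ and $B$), whereas your generalized Laplace expansion yields only that $f$ splits into two nonconstant forms over $\Q$; either conclusion defeats condition (1) of absolute irreducibility, so your weaker intermediate claim is entirely sufficient.
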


\begin{proof}
If $(A,B)$ satisfies Condition \ref{redcond1}, then it is easy to see
that the binary $n$-ic invariant of $(A,B)$ has a repeated factor over
$\Q$. Thus, the discriminant of the form vanishes. If $(A,B)$
satisfies Condition \ref{redcond2}, then clearly the quadratic forms
$A$ and $B$ have a common isotropic subspace of dimension
$(n-1)/2$. In either case, the pair $(A,B)$ is not absolutely
irreducible.
\end{proof}

Recall that the condition for $t=(t_1^{-1},\ldots,t_{n}^{-1})$ to be
an element of $T_n'$ is that $t_i/t_{i+1}>c$ for $1 \leq i \leq
n-1$. To simplify this condition, we use a change of variables: let
$s_i=t_i/t_{i+1}$ for $1\leq i\leq n-1$. Then $s=(s_1,\ldots,s_{n-1})$
is contained in $T'$ if and only if $s_i > c$ for each $i$.  The
action of the torus $T_2\times T_n$ of $G(\R)$ on $V(\R)$ multiplies
each coefficient by a monomial in $t,s_1,\ldots,s_{n-1}$. We denote
the set of coefficients of $V(\R)$ by $\var$; we have
\begin{equation*}\label{eqvar}
\var:=\{a_{ij},b_{ij}:1\leq i\leq j\leq n\}.
\end{equation*}
To each variable $c_{ij}$ in $\var$, we associate two weights: first,
the monomial $w_H(c_{ij})$ in the $s_i$ by which the action of $T_n$
scales $c_{ij}$ and second, the monomial $w_J(c_{ij})$ in $t$ and the
$s_i$ by which the action of $T_2\times T_n$ scales $c_{ij}$.  We
multiplicatively extend the function $w_H$ and $w_J$ to products of
integral powers of elements in $\var$.  We define a partial ordering
on $\var$ by setting $\alpha_1\lesssim_H \alpha_2$
(resp.\ $\alpha_1\lesssim_J \alpha_2$) whenever
$w_H(\alpha_2)/w_H(\alpha_1)$ (resp.\ $w_J(\alpha_2)/w_J(\alpha_1)$)
is a product of nonnegative powers of $s_i$ for each $i$ (resp.\ of
$t$ and $s_i$ for each $i$). The variable $a_{11}$ has minimal weight
under both these partial orderings. For a subset $\var'\subset\var$,
let $V(\Z)(\var')$ denote the set of $v \in V(\Z)$ such that
$\alpha(v) = 0$ for $\alpha \in \var'$. Then we have the following
immediate consequence of Lemma \ref{lemredcond}:
\begin{lemma}
Let $\var'\subset\var$ be a set that is closed under one of the
partial orderings $\lesssim_H$ and $\lesssim_J$. If
$V(\Z)(\var')^\irr$ is nonempty, then $\var'$ must be contained in the
following set:
\begin{equation*}
\var_0:=\{a_{ij}\in\var:i+j\leq n\}\cup \{b_{ij}\in\var:i+j\leq n-1\}\setminus\{b_{mm}\},
\end{equation*}
where $m = (n-1)/2$.
\end{lemma}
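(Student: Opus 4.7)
The plan is to prove the contrapositive: assume $\var' \not\subseteq \var_0$ and pick $\alpha \in \var' \setminus \var_0$; I will show that $V(\Z)(\var')^\irr = \emptyset$. From the definition of $\var_0$, the element $\alpha$ must fall into exactly one of three types: (i) $\alpha = b_{mm}$; (ii) $\alpha = b_{ij}$ with $i + j \geq n$; or (iii) $\alpha = a_{ij}$ with $i + j \geq n + 1$. In each case my strategy is to use closure of $\var'$ under the relevant partial ordering to force enough variables into $\var'$ to invoke either Lemma \ref{lemredcond} or a direct rank argument.

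Cases (i) and (ii) behave identically under both orderings. The downward closure of $b_{mm}$ is $\{a_{pq}, b_{pq} : 1 \leq p \leq q \leq m\}$ under both $\lesssim_H$ and $\lesssim_J$, matching condition (b) of Lemma \ref{lemredcond}. For case (ii), taking WLOG $i \leq j$, the downward closure of $b_{ij}$ contains $\{a_{pq}, b_{pq} : p \leq i,\ q \leq j\}$ under both orderings --- note $a_{i'j'} \lesssim_J b_{ij}$ holds because $w_J(b_{ij})/w_J(a_{i'j'}) = t^{2} \cdot t_{i'} t_{j'}/(t_i t_j)$ has non-negative $t$-exponent. Choosing $k = n - j$ (or $k = 1$ when $j = n$), the $k \times (n-k)$ zero block of condition (a) lies inside this forced set since $k \leq i$ (from $i + j \geq n$) and $n - k \leq j$, so Lemma \ref{lemredcond}(a) closes the case.

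Under $\lesssim_H$, case (iii) collapses to case (ii): since $w_H(a_{ij}) = w_H(b_{ij}) = t_i^{-1} t_j^{-1}$ we have $b_{ij} \lesssim_H a_{ij}$, so $b_{ij} \in \var'$ with $i + j \geq n + 1 \geq n$. The main obstacle is case (iii) under $\lesssim_J$: here $w_J(a_{ij})/w_J(b_{i'j'}) = t^{-2} \cdot t_{i'} t_{j'}/(t_i t_j)$ carries a strictly negative power of $t$, so no $b$-variable is forced into $\var'$ and Lemma \ref{lemredcond} does not apply directly.

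For this remaining case my plan is a direct rank argument. The forced inclusion $\{a_{pq} : p \leq i,\ q \leq j\} \subseteq \var'$ means that for any $(A, B) \in V(\Z)(\var')$, the first $i$ rows of $A$ are supported inside the $(n - j)$-dimensional subspace spanned by the last $n - j$ coordinate axes. Because $i + j \geq n + 1$ gives $i > n - j$, these rows are linearly dependent, so $\det A = 0$. Therefore $f_{(A,B)}(1, 0) = \det A = 0$, and $y \mid f_{(A,B)}$ as a homogeneous polynomial; writing $f_{(A,B)} = y \cdot g$ with $\deg g = n - 1 \geq 2$ exhibits a nontrivial factorization over $\Q$, so $R_{f_{(A,B)}}$ is not a domain and $(A, B)$ cannot be absolutely irreducible. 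Assembling the three cases under both orderings completes the contrapositive and hence the lemma.
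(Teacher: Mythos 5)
Your proof is correct. The paper itself offers no argument here: the lemma is asserted to be an ``immediate consequence'' of Lemma \ref{lemredcond}, so there is no written proof to compare against, and your case analysis is exactly the kind of verification the authors are leaving to the reader. Your three cases for an element of $\var'\setminus\var_0$ are exhaustive, and the computations of the downward closures under $\lesssim_H$ and $\lesssim_J$ (using $w_H(a_{ij})=w_H(b_{ij})$ and the extra factor $t^{\pm 2}$ distinguishing $a$- from $b$-variables under $\lesssim_J$) are right, as is the bookkeeping translating sets of variables $a_{pq}$ with $p\leq q$ into full matrix blocks. The one place where your write-up genuinely adds something not literally contained in Lemma \ref{lemredcond} is case (iii) under $\lesssim_J$: there the forced set contains no $b$-variables, so neither condition (a) nor (b) of that lemma applies, and your rank argument --- the first $i$ rows of $A$ live in an $(n-j)$-dimensional coordinate subspace with $i>n-j$, so $\det A=f_0=0$ and $y\mid f_{(A,B)}$, violating irreducibility of the resolvent --- is exactly the missing ingredient. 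This shows the lemma is not quite a formal corollary of Lemma \ref{lemredcond} alone, but the extra step is elementary and your treatment of it is correct.
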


\begin{proof}[Proof of Proposition \ref{propirredcusp}]
By the arguments of \cite[\S3]{5sel}, it suffices to display the following
data in order to prove the part of Proposition \ref{propirredcusp}
regarding the height (resp.\ the Julia invariant): a function
$\psi:\var_0\setminus a_{11}\to\var\setminus\var_0$ such that
\begin{itemize}
\item[{\rm (1)}] 
$\alpha\lesssim_H\psi(\alpha)\;\forall
\alpha\in\var_0\backslash a_{11}$ (resp.\ 
$\alpha\lesssim_J\psi(\alpha)\;\forall
\alpha\in\var_0\backslash a_{11}$), and
\item[{\rm (2)}]
  $w_H\Bigl(\displaystyle\prod_{\alpha\in\var_0}\alpha^{-1}
\psi(\alpha)\Bigr)\cdot
  h_H$
  (resp.\ $w_J\Bigl(\displaystyle\prod_{\alpha\in\var_0}
\alpha^{-1}\psi(\alpha)\Bigr)\cdot
  h_J$) is a product of negative powers of the $s_i$ (resp.\ negative
  powers of $t$ and the $s_i$),
\end{itemize}
where $\psi(a_{11})$ is defined to be $1$, and where $h_H$ and $h_J$ are
factors arising from the Haar measures of $\SL_n(\R)$ and $G(\R)$ and are
given by
\begin{align*}
h_H := \displaystyle\prod_{k=1}^{n-1}s_k^{-nk(n-k)} && \textrm{and} &&
h_J := t^{-2}\displaystyle\prod_{k=1}^{n-1}s_k^{-nk(n-k)}.
\end{align*}

First note that such a function $\psi$ satisfying the required
conditions regarding the Julia invariant automatically satisfies the
required conditions regarding the height (since
$\alpha\lesssim_J\beta$ implies $\alpha\lesssim_H\beta$.) We define
$\psi$ as follows:
\begin{equation}\label{eqtheta}
\begin{array}{rcl}
\psi(a_{ij})&:=&\displaystyle\left\{
\begin{array}{rl}
a_{1n} &{\rm for}\;i=1;\\
a_{i(n-i+1)} &{\rm for}\;i>1\;{\rm and}\;j\neq m;\\
a_{(m+1)(m+1)} &{\rm for}\;i>1\;{\rm and}\;j= m;
\end{array}
\right.\\[.3in]
\psi(b_{ij})&:=&\displaystyle\left\{
\begin{array}{rl}
b_{j(n-j)} &{\rm for}\;j<m;\\
b_{mm} &{\rm for}\;j=m;\\
b_{(n-j-1)(j+1)} &{\rm for}\;j>m.
\end{array}
\right.
\end{array}
\end{equation}
The function $\psi$ clearly satisfies the first of the two required
conditions. From an elementary computation, we see that
\begin{equation*}
w_J\Bigl(\displaystyle\prod_{\alpha\in\var_0}
\alpha^{-1}\psi(\alpha)\Bigr)\cdot h_J=
t^{-1}\prod_{k=1}^ms_k^{-2k}\prod_{k=m+1}^{n-1}s_k^{-2(k-m)+1}.
\end{equation*}
This concludes the proof of Proposition \ref{propirredcusp}.
\end{proof}

\subsubsection*{Reducible points in the main body}
We say that an element $v\in V(\Z)$ is {\it bad} if $v$ is not
absolutely irreducible. Denote the set of bad elements in $V(\Z)$ by
$V(\Z)^\bad$.  We have the following theorem proving that the number
of bad elements in the main body is negligible.
\begin{prop}\label{propfewred}
We have
\begin{align*}
\displaystyle\int_{h\in\FF_H\backslash\FF_H'}\#\{hG_0\cdot
\RR_H^{(r_2),\delta}(X)\cap V(\Z)^\bad\} \dh
&=o(X^{n+1}), \textrm{ and} \\
\displaystyle\int_{h\in\FF_J\backslash\FF_J'}\#\{hG_0'\cdot
\RR_J^{(r_2),\delta}(X)\cap V(\Z)^\bad\} \dh
&=o(X^{n+1}).
\end{align*}
\end{prop}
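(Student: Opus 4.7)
I partition $V(\bZ)^{\bad}$ according to the nature of the invariant $f=\pi(A,B)$ into two subsets: $V^{\bad}_{\text{inv}}$, consisting of $(A,B)$ whose invariant $f$ is degenerate, $\bQ$-reducible, or defines a non-$S_n$ extension; and $V^{\bad}_{\text{red}}$, consisting of $(A,B)$ whose invariant corresponds to an $S_n$-order but which are themselves reducible in the sense of Theorem \ref{threducible}. It suffices to bound each main-body integral by $o(X^{n+1})$.

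For $V^{\bad}_{\text{inv}}$, I first bound the number of bad invariants. The count of $f\in U(\bZ)$ with $H(f)<X$ (resp.\ $J(f)<X$) that are degenerate or $\bQ$-reducible is $O(X^n\log X)$: these loci are codimension-one subvarieties of $U(\bR)$, handled by direct integration together with Proposition \ref{thunifbcf}. The count of irreducible $f$ whose splitting field is not $S_n$ is $o(X^{n+1})$ by a quantitative Hilbert-irreducibility estimate analogous to those in \cite{BSW-part2, manjulcountquintic}. For each such bad $f$, Corollary \ref{cor:stabs} bounds the number of $\SL_n(\bZ)$-orbits of $(A,B)\in\pi^{-1}(f)\cap V(\bZ)$ by $O_n(1)$ (or more precisely, the $2$-torsion of a finite group), and applying Davenport's lemma to the main body yields a total contribution of $o(X^{n+1})$.

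For $V^{\bad}_{\text{red}}$, Theorem \ref{threducible} characterizes reducibility by the condition that $\delta$ is a $\bQ$-square. Since $\bQ$-squares in $R_f\otimes\bQ$ are $\bR$-squares in $R_f\otimes\bR$, every such $(A,B)$ lies in the single fundamental set $\RR^{(r_2),(1,\ldots,1)}$ corresponding to the identity of $\T(r_2)$. Within this set, Proposition \ref{reducible} parametrizes the projective reducible orbits above $\cO_f$ by $\cI_2(\cO_f)$, which is trivial for maximal $\cO_f$. Using Proposition \ref{thunifbcf} to sieve on square divisors of the conductor, we have
\begin{equation*}
\sum_{f:H(f)<X}\bigl(|\cI_2(\cO_f)|-1\bigr)=o(X^{n+1}),
\end{equation*}
so nontrivial elements of $\cI_2$ contribute $o(X^{n+1})$. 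For the identity element, the corresponding $\SL_n(\bZ)$-orbit is represented by the section $e(f)$ constructed in \S \ref{sec:funddomains}; the explicit formula given there shows that $a_{11}(e(f))=b_{11}(e(f))=0$ for every $f$. We then count the $\SL_n(\bZ)$-translates of these orbits that fall in the main-body region $\FF_H\setminus\FF_H'$ (resp.\ $\FF_J\setminus\FF_J'$) by a Davenport-style argument on the Siegel domain, using the constraint $a_{11}=0$ to confine the representatives to a thin subset. Nonprojective reducible orbits (from non-invertible $\II$ with $\II^2\subset\If^{n-3}$) are bounded by a parallel argument after a further stratification by the support of non-invertibility.

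\textbf{Main obstacle.} The subtlest step is the identity-element analysis in $V^{\bad}_{\text{red}}$: there are $\sim X^{n+1}$ such orbits (one per maximal $\cO_f$), and each has a priori $\sigma(r_2)$ representatives in the $\sigma(r_2)$-fold cover $\FF\cdot\RR^{(r_2),(1,\ldots,1)}$, some of which could lie in the main body. Showing that these representatives in fact concentrate in the cusp, or more precisely that their main-body count is $o(X^{n+1})$, requires a careful Siegel-domain analysis exploiting the vanishing $a_{11}(e(f))=0$ and the rigidity of the section $e$. This mirrors the technique used in \cite{BV3} for the cubic case and should be adaptable by tracking how the parabolic subgroup stabilizing the isotropic subspace interacts with the chosen Siegel fundamental domain $\Si_H$ (resp.\ $\Si_J$).
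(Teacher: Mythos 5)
Your decomposition of $V(\Z)^{\bad}$ matches the paper's in spirit (non-$S_n$ invariants versus genuinely reducible pairs), but both halves of your argument have gaps, and the second is fatal. For $V^{\bad}_{\mathrm{inv}}$: Corollary \ref{cor:stabs} describes stabilizers, not orbit counts; the number of $\SL_n(\Z)$-orbits above a fixed invariant $f$ is governed by $\rmH(R_f)$ (hence by $|\Cl_2(R_f)|$, plus non-projective classes) and is not $O_n(1)$, and in any case one cannot run Davenport's lemma fiber-by-fiber over the invariant. The paper avoids this entirely: it observes that if $R_f$ is not an order in an $S_n$-field, then for some $k$ the reduction of $f$ modulo $p$ fails, for \emph{every} prime $p$, to factor as an irreducible degree-$k$ factor times $n-k$ distinct linear factors; since $\#V(\F_p)^{=k}\gg\#V(\F_p)$ uniformly in $p$ (each $f\in U(\F_p)^{=k}$ has a lift with $\delta=1$, and stabilizers there are uniform in $p$), the sieve of \cite[\S3]{5sel} makes this locus negligible. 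No Hilbert-irreducibility input over $\Z$ and no fiberwise orbit count is needed.

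The reducible case is where your argument genuinely fails. There are $\asymp X^{n+1}$ reducible orbits of height less than $X$ --- one ``identity'' orbit corresponding to $(\If^{(n-3)/2},1)$ for each of the $\asymp X^{n+1}$ forms $f$, by Proposition \ref{reducible} --- so no bound on the \emph{number} of reducible orbits can be $o(X^{n+1})$; in particular your claimed estimate $\sum_f\bigl(|\cI_2(\cO_f)|-1\bigr)=o(X^{n+1})$ is beside the point (and is itself doubtful, since non-maximal orders occur with positive density in $\fR_H$). The entire content of the statement is that the integral points of these orbits concentrate in the cusp, which is exactly the step you flag as the ``main obstacle'' and do not carry out. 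Your proposed mechanism does not work as stated: Theorem \ref{threducible} gives $a_{ij}=b_{ij}=0$ for $i,j\le(n-1)/2$ only after an $\SL_n(\Q)$-change of basis, so the \emph{integral} points of a reducible integral orbit need not have $a_{11}=0$, and the vanishing $a_{11}(e(f))=0$ of the section is irrelevant since $e(f)$ is not in general an integral representative of such an orbit. The paper's resolution is again the mod-$p$ sieve: it defines $V(\F_p)^{\irr}$ to be the set of residues none of whose lifts to $V(\Z)$ is reducible, proves $\#V(\F_p)^{\irr}\gg\#V(\F_p)$ (using non-identity classes in $\F_{p^n}^\times/(\F_{p^n}^{\times})^2_{N\equiv1}$ over $f\in U(\F_p)^{=n}$ via Corollary \ref{cor:orbits}), and notes that a reducible integral point reduces into the complement of $V(\F_p)^{\irr}$ for every $p$. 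To complete your proof you would need either to reproduce that argument or to actually supply the Siegel-domain analysis you only sketch.
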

\begin{proof}
For an integer $k$ with $2\leq k\leq n$, let $V(\Z)^{\neq k}$ denote
the set of elements $v\in V(\Z)$ such that, for each prime $p$, the
reduction modulo $p$ of the resolvent of $v$ does {\it not} factor
into a product of an irreducible degree $k$ factor and $n-k$ linear
factors. We claim that if the resolvent $f$ of an element $v\in V(\Z)$
does not correspond to an order in an $S_n$-field, then $v$ belongs to
$V(\Z)^{\neq k}$ for some $k$. Indeed, if $v$ lies in the complement
of $V(\Z)^{\neq n}$, then the reduction modulo $p$ of $f$ is
irreducible for some prime $p$, implying that $f$ is irreducible and
hence $R_f$ is an order. Furthermore, the Galois group of the Galois
closure of the fraction field of $R_f$ contains a $k$-cycle for each
$k$, implying that this Galois group is $S_n$.

Hence we may write
\begin{equation*}
V(\Z)^\bad=(\cup V(\Z)^{\neq k})\bigcup V(\Z)^\red
\end{equation*}
where $V(\Z)^\red$ denotes the set of elements that are reducible in
the sense of Theorem \ref{threducible}.

For each prime $p$, let $V(\Fp)^{=k}$ denote the set of elements whose
cubic resolvents factor into a product of a degree $k$ irreducible
factor and $n-k$ distinct linear factors.  Let $V(\Fp)^\irr$ denote
the set of elements in $v\in V(\Fp)$ such that every lift
$\tilde{v}\in V(\Z)$ is not reducible in the sense of Theorem
\ref{threducible}. Let $V(\Fp)^\nostab$ denote the set of elements
which have trivial stabilizer in $G(\Fp)$.  Then, from
\cite[\S3]{5sel}, it suffices to prove the following estimates:
\begin{align}
\#V(\Fp)^{=k} &\gg\#V(\Fp), \textrm{ and} \nonumber \\
\#V(\Fp)^\irr &\gg\#V(\Fp). \label{ineq:VFpestimate}
\end{align}
Let $U(\Fp)^{=k}$ denote the set of binary $n$-ic forms that factor
into a degree $k$ irreducible polynomial and $n-k$ distinct linear
factors. For every element $f\in U(\Fp)^{=k}$, the algebra $R_f$ is
isomorphic to a product of a degree $k$ extension of $\F_p$ and $n-k$
copies of $\Fp$. Therefore, the stabilizer in $\SL_n(\Fp)$ of every
element $v\in V(\Fp)^{=k}$ is independent of $v$ and $p$. Every lift
in $U(\F_p)^{=k}$ has at least one lift to $V(\Fp)^{=k}$
(corresponding to $\delta=1$). It follows that
\begin{equation*}
\#V(\Fp)^{=k}\gg\#U(\F_p)^{=k}\cdot\#\SL_n(\Fp)\gg\#V(\Fp),
\end{equation*}
as desired.

The proof of the inequality \eqref{ineq:VFpestimate} is similar. It follows from the
observation that every element in $V(\Fp)^{=n}$ that corresponds to a
nonidentity element in $\F_{p^n}^\times/(\F_{p^n}^\times)^2_{N\equiv
  1}$, under the bijection of Corollary \ref{cor:orbits}, belongs
to $V(\Fp)^\irr$.
\end{proof}

\subsubsection*{Absolutely irreducible points in the main body}

Let $L\subset V(\Z)$ be a lattice or a translate of a lattice in
$V(\R)$, and let $L^{(r_2),\delta}$ denote $L\cap
V(\Z)^{(r_2),\delta}$. We have already proved that the number of
irreducible integral points in the cusp is negligible and that the
number of reducible integral points in the main body is
negligible. Therefore, from \eqref{eqavgtwo}, Proposition
\ref{propirredcusp}, and Proposition \ref{propfewred}, we have
\begin{align*}
N_H(L^{(r_2),\delta},X)&=
\displaystyle\frac{1}{\sigma(r_2)\Vol(G_0)}
\displaystyle\int_{h\in\FF_H\setminus\FF_H'}\#\{hG_0\cdot
\RR_H^{(r_2),\delta}(X)\cap L\} \dh+o(X^{n+1}), \textrm{ and}\\
N_J(L^{(r_2),\delta},X)&=
\displaystyle\frac{1}{\sigma'(r_2)\Vol(G_0')}
\displaystyle\int_{h\in\FF_J\setminus\FF_J'}\#\{hG_0'\cdot
\RR_J^{(r_2),\delta}(X)\cap L\} \dh+o(X^{n+1}).
\end{align*}

To estimate the number of lattice points in $hG_0\cdot
\RR_H^{(r_2),\delta}(X)$ and $hG_0'\cdot
\RR_J^{(r_2),\delta}(X)$, we have the following result of
Davenport~\cite{davenport-volume1}.
\begin{prop}\label{davlem}
  Let $\mathcal R$ be a bounded, semi-algebraic multiset in $\R^n$
  having maximum multiplicity $m$, and that is defined by at most $k$
  polynomial inequalities each having degree at most $\ell$.  Then the
  number of integral lattice points $($counted with multiplicity$)$
  contained in the region $\mathcal R$ is
\[\Vol(\mathcal R)+ O(\max\{\Vol(\bar{\mathcal R}),1\}),\]
where $\Vol(\bar{\mathcal R})$ denotes the greatest $d$-dimensional
volume of any projection of $\mathcal R$ onto a coordinate subspace
obtained by equating $n-d$ coordinates to zero, where
$d$ takes all values from
$1$ to $n-1$.  The implied constant in the second summand depends
only on $n$, $m$, $k$, and $\ell$.
\end{prop}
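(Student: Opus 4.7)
My plan is to prove Proposition \ref{davlem} by induction on the dimension $n$, following the classical slicing argument. The constants $k$, $\ell$, $m$ are preserved under slicing by integer hyperplanes, so the implied constants in the $O(\cdot)$ propagate correctly through the induction.

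For the base case $n=1$, a set $\mathcal R \subset \R$ cut out by at most $k$ polynomial inequalities of degree $\leq \ell$ has boundary consisting of at most $k\ell$ real roots, and so is a finite union of at most $O_{k,\ell}(1)$ intervals, each with multiplicity at most $m$. The number of integer lattice points in an interval of length $L$ differs from $L$ by at most $1$, so summing over the bounded number of constituent intervals yields $|\#(\mathcal R \cap \Z) - \Vol(\mathcal R)| = O(1)$. Since there are no lower-dimensional coordinate projections when $n=1$, this matches the conclusion.

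For the inductive step, project $\mathcal R$ onto the $x_n$-axis and, for each integer $t$ in this projection, set $\mathcal R_t := \{(x_1,\ldots,x_{n-1}) : (x_1,\ldots,x_{n-1},t) \in \mathcal R\}$. Each $\mathcal R_t$ is a bounded semi-algebraic multiset in $\R^{n-1}$ still cut out by at most $k$ polynomials of degree $\leq \ell$ with multiplicity at most $m$, so the inductive hypothesis gives
\[
\#(\mathcal R_t \cap \Z^{n-1}) = \Vol_{n-1}(\mathcal R_t) + O\bigl(\max\{\Vol(\bar{\mathcal R_t}),1\}\bigr).
\]
Summing over $t$ and comparing to $\Vol_n(\mathcal R) = \int \Vol_{n-1}(\mathcal R_t)\,dt$ produces two error contributions: a Riemann-sum discrepancy $\ll \sup_t \Vol_{n-1}(\mathcal R_t)$, and the sum over $t$ of the inductive error terms. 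The supremum is bounded by the volume of the projection of $\mathcal R$ onto the first $n-1$ coordinates, which is one of the quantities controlled by $\Vol(\bar{\mathcal R})$. For the inductive error sum, every $d$-dimensional coordinate projection of a slice $\mathcal R_t$ lifts to a $(d+1)$-dimensional coordinate projection of $\mathcal R$ itself, so $\Vol(\bar{\mathcal R_t}) \leq \Vol(\bar{\mathcal R})$; the number of relevant integer $t$ is at most the length of the projection of $\mathcal R$ onto the $x_n$-axis plus one, itself $O(\max\{\Vol(\bar{\mathcal R}),1\})$. Multiplying gives a total error $O(\max\{\Vol(\bar{\mathcal R}),1\})$, as required.

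The main obstacle is the Riemann-sum estimate: one must show that $t \mapsto \Vol_{n-1}(\mathcal R_t)$ is piecewise-monotonic with a number of pieces bounded only in terms of $n$, $k$, $\ell$. This follows from the fact that the slice volume is itself a semi-algebraic function of $t$, for instance via a cylindrical algebraic decomposition of the defining polynomial inequalities, with combinatorial complexity depending only on $n$, $k$, $\ell$. On each monotonic piece the standard bound $|\sum f(t) - \int f(t)\,dt| \leq \sup f$ applies, and summing over the bounded number of pieces yields the required $O(\sup_t \Vol_{n-1}(\mathcal R_t))$ estimate, completing the induction.
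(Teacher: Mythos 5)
The paper gives no proof of Proposition \ref{davlem} at all --- it is quoted verbatim from Davenport \cite{davenport-volume1} --- so your proposal must be judged on its own terms against Davenport's classical slicing induction, which it resembles in outline. The base case, the reduction of each slice $\mathcal R_t$ to the inductive hypothesis, and the bound on the number of nonempty slices are all fine (modulo replacing ``plus one'' by ``plus the number of connected components of the projection onto the $x_n$-axis'', which is $O_{n,k,\ell}(1)$).

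The genuine gap is in how you sum the per-slice errors. You bound $\sum_t O(\max\{\Vol(\bar{\mathcal R_t}),1\})$ by (number of relevant $t$) times (a uniform bound on each term), and since each factor is only $O(\max\{\Vol(\bar{\mathcal R}),1\})$, the product is $O(\max\{\Vol(\bar{\mathcal R}),1\}^2)$, not $O(\max\{\Vol(\bar{\mathcal R}),1\})$. Concretely, for $\mathcal R=[0,N]^3$ one has $\Vol(\bar{\mathcal R})=N^2$, each slice contributes an error $O(N)$, and there are $N+1$ slices; your multiplication yields at best $O(N^3)$ (and $O(N^4)$ if one uses the cruder per-slice bound $\Vol(\bar{\mathcal R_t})\le\Vol(\bar{\mathcal R})$), whereas the proposition asserts $O(N^2)$. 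The correct bound $\sum_t \Vol_d(\mathrm{proj}_S\,\mathcal R_t)\ll \max\{\Vol(\bar{\mathcal R}),1\}$ holds because this sum is itself a Riemann sum (indeed a lattice-point count) for $\Vol_{d+1}(\mathrm{proj}_{S\cup\{n\}}\,\mathcal R)$: the slice errors must be made to telescope into volumes of higher-dimensional coordinate projections of $\mathcal R$, with the resulting Riemann-sum discrepancies controlled recursively by still lower-dimensional projections. Organizing the induction so that this telescoping closes is the essential content of Davenport's argument, and it is absent from your write-up. Two secondary soft spots: the inequality $\Vol(\bar{\mathcal R_t})\le\Vol(\bar{\mathcal R})$ is true, but because $\mathrm{proj}_S(\mathcal R_t)\subseteq\mathrm{proj}_S(\mathcal R)$, not because a $d$-dimensional projection of a slice ``lifts to'' a $(d+1)$-dimensional projection of $\mathcal R$ (the volume of a slice is not bounded by the volume of the set it slices); and the slice-volume function $t\mapsto\Vol_{n-1}(\mathcal R_t)$ of a semi-algebraic family need not be semi-algebraic (it can involve $\arcsin$ or $\log$), so piecewise monotonicity with $O_{n,k,\ell}(1)$ pieces does not follow from cylindrical algebraic decomposition as stated --- one should instead, for example, express the slice volume cell-by-cell as a difference of monotone functions to obtain a total-variation bound $\ll_{n,k,\ell}\sup_t\Vol_{n-1}(\mathcal R_t)$.
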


The coefficient $a_{11}$ has minimal weight among all the
coefficients. Furthermore, for $h\in\FF_H\setminus\FF_H'$, the volume
of the projection of $hG_0\cdot \RR^{(r_2)}(X)$ onto the
$a_{11}$-coordinate is bounded away from $0$ by the definition of
$\FF_H'$.  Therefore, for $h\in\FF_H\setminus\FF_H'$, all proper
projections of $hG_0\cdot \RR^{(r_2)}(X)$ are bounded by a constant
times its projection onto the $a_{11}=0$ hyperplane. Proposition
\ref{davlem} thus implies that
\begingroup
\addtolength{\jot}{1em}
\begin{align*}
N_H(L^{(r_2),\delta},X)&= \displaystyle\frac{1}{\sigma(r_2)\Vol(G_0)}\displaystyle\int_{h\in(\FF_H\setminus\FF_H')}
\#\{hG_0\cdot \RR_H^{(r_2),\delta}(X)\cap L\} \dd h+o(X^{n+1}) \\
&= \displaystyle\frac{1}{\sigma(r_2)\Vol(G_0)}\displaystyle\int_{h\in(\FF\setminus\FF')}
\Vol_L(hG_0\cdot \RR_H^{(r_2),\delta}(X)) \dd h +o(X^{n+1}) \\
&= \displaystyle\frac{1}{\sigma(r_2)\Vol(G_0)}\Vol(\FF_H)\Vol_L(G_0\cdot \RR_H^{(r_2),\delta}(X))
+o(X^{n+1})\\
&=\displaystyle\frac{1}{\sigma(r_2)}\Vol_L(\FF_H\cdot \RR_H^{(r_2),\delta}(X))
+o(X^{n+1}),
\end{align*}
\endgroup
where the volume $\Vol_L$ of sets in $V(\R)$ is computed with respect
to the Euclidean measure on $V(\R)$ normalized so that $L$ has covolume
$1$, and where the third equality follows since $\Vol(\FF')$ tends to
zero as $X$ tends to infinity, and $\Vol_L(hG_0\cdot \RR^{(r_2)}(X))$
is independent of $h$, and the final equality follows from the
Jacobian change of variables in Theorem \ref{thmjac}.

An identical argument yields the analogous estimate for
$N_J(L^{(r_2),\delta},X)$. Let $L_p$ denote the closure of $L$ in
$V(\Z_p)$. Then for measurable sets $B$ in $V(\R)$, we have
\begin{equation*}
\Vol_L(B)=\Vol(B)\cdot\prod_p \Vol(L_p),
\end{equation*}
where $\Vol(B)$ is computed with respect to the Euclidean measure in
$V(\R)$ normalized so that $V(\Z)$ has covolume $1$, and the volumes
of $L\subset V(\Z_p)$ are computed with respect to the Haar measure on
$V(\Z_p)$ normalized so that $V(\Z_p)$ has volume $1$.
We thus have the following
theorem:
\begin{thm}\label{thmainzcount}
Let notation be as above. Then we have
\begin{align*}
N_H(L^{(r_2),\delta},X)&=
\displaystyle\frac{1}{\sigma(r_2)}
\Vol(\FF_H\cdot \RR_H^{(r_2),\delta}(X))\prod_p\Vol(L_p)
+o(X^{n+1}), \textrm{ and}\\
N_J(L^{(r_2),\delta},X)&=
\displaystyle\frac{1}{\sigma'(r_2)}
\Vol(\FF_J\cdot \RR_J^{(r_2),\delta}(X))\prod_p\Vol(L_p)
+o(X^{n+1}).
\end{align*}
\end{thm}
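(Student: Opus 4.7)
The plan is to assemble Theorem \ref{thmainzcount} by chaining together the pieces already developed in this section; the statement is essentially a clean packaging of the displayed computation preceding it. I would first start from the averaging identity \eqref{eqavgtwo}, which expresses $N_H(L^{(r_2),\delta},X)$ (resp.\ $N_J(L^{(r_2),\delta},X)$) as an integral over the fundamental domain $\FF_H$ (resp.\ $\FF_J$) of a point-count in a translated skewed box $hG_0 \cdot \RR_H^{(r_2),\delta}(X)$ against the lattice $L$. I would then split the domain of integration as $\FF_H = \FF_H' \sqcup (\FF_H \setminus \FF_H')$ and use Proposition \ref{propirredcusp} to discard the cuspidal contribution $\FF_H'$ at cost $O(X^{n+1-1/n})$, and Proposition \ref{propfewred} to replace $V(\Z)^\irr$ by all of $L$ inside the main body $\FF_H\setminus\FF_H'$ at cost $o(X^{n+1})$; the analogous reductions apply for $J$.

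Next, on the main body I would apply Davenport's lattice-point estimate, Proposition \ref{davlem}, to the region $hG_0\cdot\RR_H^{(r_2),\delta}(X)$ (scaled so $L$ has covolume $1$). The key observation enabling the error term to be negligible is that the coefficient $a_{11}$ has minimal weight under $\lesssim_H$ and $\lesssim_J$, and on $\FF_H\setminus\FF_H'$ and $\FF_J\setminus \FF_J'$ the projection onto the $a_{11}$-axis is bounded away from zero; therefore every proper coordinate projection of the region is dominated by its projection onto the hyperplane $a_{11}=0$, so the Davenport error is controlled by an integral that, combined with the cuspidal contribution already discarded, remains $o(X^{n+1})$. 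This replaces the lattice point count by $\Vol_L(hG_0\cdot \RR_H^{(r_2),\delta}(X))$ up to acceptable error.

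I would then carry out the outer $h$-integration: since the volume of $hG_0\cdot\RR_H^{(r_2),\delta}(X)$ is independent of $h\in \FF_H$ (being computed via a left-invariant measure and a $K_n$-invariant set $G_0$), the integral over $\FF_H\setminus\FF_H'$ produces $\Vol(\FF_H)\cdot \Vol_L(G_0 \cdot \RR_H^{(r_2),\delta}(X))$, up to a negligible $\Vol(\FF_H')$-term. Dividing by $\sigma(r_2)\Vol(G_0)$ and applying the Jacobian change-of-variables formula (Theorem \ref{thmjac}) to pass from $G_0\cdot\RR_H^{(r_2),\delta}(X)$ back to $\FF_H\cdot\RR_H^{(r_2),\delta}(X)$, I would arrive at
\[
N_H(L^{(r_2),\delta},X) = \tfrac{1}{\sigma(r_2)}\Vol_L(\FF_H\cdot \RR_H^{(r_2),\delta}(X)) + o(X^{n+1}).
\]
The same chain of steps, using $\FF_J$, $G_0'$, and $\sigma'(r_2)$, yields the analogous identity for $N_J$.

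Finally, I would convert $\Vol_L$ back to the usual Euclidean volume by the normalization identity $\Vol_L(B) = \Vol(B)\cdot \prod_p \Vol(L_p)$, valid whenever $L$ is a lattice (or coset of one) in $V(\Z)$, with $L_p$ its closure in $V(\Z_p)$ and the $p$-adic measure normalized so that $V(\Z_p)$ has volume $1$. Substituting this in gives exactly the claimed formulas. The main technical obstacle in this plan is the uniform-in-$h$ control of the Davenport error in the main body; this is precisely what is bought by the weight analysis of $a_{11}$, so the work is already essentially done by the definitions of $\FF_H'$ and $\FF_J'$ and the preceding propositions on the cusp and on reducible points.
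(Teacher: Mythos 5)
Your proposal is correct and follows essentially the same route as the paper: the averaging identity \eqref{eqavgtwo}, the cusp/main-body split via Propositions \ref{propirredcusp} and \ref{propfewred}, Davenport's estimate with the error controlled by the $a_{11}$-projection being bounded away from zero on $\FF_H\setminus\FF_H'$, the $h$-independence of the volume together with the Jacobian change of variables of Theorem \ref{thmjac}, and finally the factorization $\Vol_L(B)=\Vol(B)\prod_p\Vol(L_p)$. No gaps.
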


\begin{remark}
Using the Selberg sieve identically as in \cite[\S3]{JacobArulS5}, we
may improve the error term in Proposition \ref{propfewred}, and thus
in Theorem \ref{thmainzcount}, to $O(X^{n+1-\frac{1}{5n}})$. However,
this additional saving will not be necessary for the results in this paper.
\end{remark}

\section{Sieving to projective elements and acceptable sets} \label{sec:sieve}

In this section, we first determine asymptotics for $\SL_n(\Z)$-orbits
and $G(\Z)$-orbits on certain families having bounded height. Second, we
determine asymptotics for $\SL_n(\Z)$-orbits and $G(\Z)$-orbits on
acceptable sets conditional on a tail estimate. This tail estimate is
unknown for $n\geq 5$, but is known when $n=3$ (see \cite[Proposition
  23]{manjulcountquartic}). We begin by describing the very large and acceptable families we study. 

For each prime $p$, let $\Lambda_p\subset
V(\Z_p)\setminus\{\Delta=0\}$ be a nonempty open set whose boundary
has measure $0$. Let $\Lambda_\infty$ denote $V(\R)^{(r_2),\delta}$ for
some integer $r_2$ with $0\leq r_2\leq (n-1)/2$ and some
$\delta\in\{\pm1\}^{n-2r_2}\times\{1\}^{r_2}$.  To a collection
$\Lambda = (\Lambda_\nu)_\nu$ of these local specifications,
we associate the set
\begin{equation*}
\V(\Lambda):=\{v\in V(\Z):\;v\in\Lambda_\nu\ \textrm{ for all } \nu\}.
\end{equation*}

We say that the collection $\Lambda=(\Lambda_\nu)_\nu$ is {\it very
  large} (respectively,\ {\it acceptable}) if, for all large enough primes
$p$, the set $\Lambda_p$ contains all elements $v\in V(\Z_p)$ such
that $v$ is projective and the invariant form $f$ of $v$ is primitive,
i.e., the coefficients of $f$ are relatively prime
(resp., $p^2\nmid\Delta(v)$). We say that $\V(\Lambda)$ is {\it very
  large} or {\it acceptable} if $\Lambda$ is.

\subsection{Sieving to projective elements}
We define $V(\Z_p)^\proj$ to be the set of elements $(A,B)\in V(\Z_p)$
whose binary $n$-ic invariants are not divisible by $p$ and
correspond to a pair $(I,\delta)$ such that $I^2=(\delta)$. Then
 $$V(\Z)^{(r_2),\proj}=V(\Z)^{(r_2)}\bigcap\bigl(\bigcap_p
V(\Z_p)^\proj\bigr).$$ For a prime $p$, let $W_p$ now denote the set of
elements in $V(\Z)$ that do not belong to $V(\Z_p)^\proj$. We would like
to estimate the number of elements in $W_p$ for large $p$.  We have
the following theorem:
\begin{thm}\label{thunifproj}
We have
\begin{align*}N_H(\displaystyle\cup_{p\geq M} W_p,X)
&=O(X^{n+1}/M^{1-\epsilon}) +o(X^{n+1}), \textrm{ and} \\
N_J(\displaystyle\cup_{p\geq M} W_p,X)
&=O(X^{n+1}/M^{1-\epsilon}) +o(X^{n+1}),
\end{align*}
where the implied constant is independent of $X$ and $M$.
\end{thm}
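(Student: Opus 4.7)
My strategy is to reduce the problem to the tail estimate for binary $n$-ic forms (Proposition \ref{thunifbcf}) via a geometric containment. For any absolutely irreducible $v \in V(\bZ)$ with $\pi(v)=f$, I claim
\[
W_p \cap V(\bZ)^{\irr} \;\subseteq\; \bigl\{v \in V(\bZ)^{\irr} : p^2 \mid \Delta(f)\bigr\}.
\]
Indeed, $v \in W_p$ occurs in exactly two scenarios. Either $p$ divides every coefficient of $f$, so $f = pg$ for some integral form $g$ and $\Delta(f) = p^{2n-2}\Delta(g)$, giving $p^2 \mid \Delta(f)$ since $n \geq 3$. Or $f$ is primitive mod $p$ but the ideal $\II$ corresponding to $v$ under Theorem \ref{thm:bij} fails to be invertible at $p$; since every fractional ideal of a Dedekind order is invertible, this forces $R_f$ to be non-maximal at $p$, so $p \mid [\cO_K : R_f]$, and the identity $\Delta(f) = [\cO_K : R_f]^2 \cdot \disc(\cO_K)$ yields $p^2 \mid \Delta(f)$.

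Given this containment, I apply the counting machinery of Section \ref{sec:counting} with the sublattice $V_p^{**}(\bZ) := \{v \in V(\bZ) : p^2 \mid \Delta(\pi(v))\}$. The key ingredient is a uniform local-density estimate
\[
\Vol_p\bigl(V_p^{**}(\bZ) \cap V(\Z_p)\bigr) \;=\; O\!\left(p^{-(2-\epsilon)}\right)
\]
valid for all sufficiently large $p$ and any $\epsilon>0$. This follows because $\Delta \circ \pi$ is a nonzero polynomial on $V$: its mod-$p$ zero locus has codimension one (contributing $p^{-1}$), and requiring second-order vanishing contributes an additional $p^{-(1-\epsilon)}$ factor by a standard Hensel-lift argument, with the $\epsilon$ absorbing a divisor-function loss uniform in $p$. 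Plugging this into Theorem \ref{thmainzcount} and summing over $p \geq M$ yields the main term $O(X^{n+1}/M^{1-\epsilon})$. The argument for $N_J$ is identical, using the Julia-invariant version of Theorem \ref{thmainzcount}.

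The principal technical obstacle is making the error term in Theorem \ref{thmainzcount} uniform in $p$, since a naive pointwise error of $o(X^{n+1})$ per prime could dominate once summed over $p$ up to a power of $X$. To resolve this, I would invoke the sharper Selberg-sieve variant mentioned in the Remark following Theorem \ref{thmainzcount}, which replaces the error by $O(X^{n+1 - 1/(5n)})$ uniformly, and combine it with Proposition \ref{thunifbcf} through a fibration argument over $\pi$: the number of absolutely irreducible $\SL_n(\bZ)$-orbits lying above a fixed irreducible $f$ with $H(f) < X$ is uniformly bounded in a quantitative enough sense that one can lift the tail bound from $U$ to $V$ with only a polylogarithmic loss. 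Summing the main term against $\sum_{p \geq M} p^{-(2-\epsilon)} = O(M^{-(1-\epsilon)})$ completes the argument, with the $\epsilon$ in the statement of the theorem being the same as the $\epsilon$ in this divisor-function-type loss.
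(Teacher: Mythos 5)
There is a genuine gap, and it is exactly at the point you flag as ``the principal technical obstacle.'' Your containment $W_p \cap V(\Z)^{\irr} \subseteq \{v : p^2 \mid \Delta(\pi(v))\}$ is correct (and is the first observation in the paper's proof), but reducing the theorem to counting elements with $p^2 \mid \Delta$ replaces the set $W_p$ by the strictly larger set $\W_p$ of \S\ref{sec:sfsieve} --- and the tail estimate for $\cup_{p \geq M}\W_p$ is precisely the \emph{conjectural} estimate \eqref{eqassumedte}, which the paper explicitly states is unknown for $n \geq 5$. The condition $p^2 \mid \Delta$ is a condition modulo $p^2$, so $V_p^{**}(\Z)$ is not a lattice (nor a translate of one), and Theorem \ref{thmainzcount} does not apply to it; decomposing it into residue classes modulo $p^2$ produces so many translates of such large covolume that the Davenport error terms dominate once $p$ exceeds a small power of $X$. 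Neither of your proposed repairs closes this: the Selberg-sieve remark improves the error in Proposition \ref{propfewred} (reducible points in the main body), not the $p$-uniformity of lattice-point counts in congruence classes mod $p^2$; and the ``fibration over $\pi$'' would require a pointwise bound on the number of orbits above a fixed $f$ --- essentially a bound on $|\Cl_2(R_f)|$ --- far stronger than anything available.

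The idea you are missing is that $W_p$, unlike $\W_p$, is defined by congruence conditions \emph{modulo $p$}. The paper shows via Nakayama's lemma that whether $(A,B)$ corresponds to a pair with $I^2 \neq (\delta)I_f^{n-3}$ depends only on $(A,B) \bmod p$, and likewise $p \mid f$ is a mod-$p$ condition; combined with the fact that every element of $W_p$ satisfies $p^2 \mid \Delta$, this places the image of $W_p$ in $V(\F_p)$ inside a subvariety of codimension at least $2$. One can then apply the Ekedahl--Bhargava geometric sieve (\cite[Theorem 3.3]{manjul-geosieve}) to the bounded region $\FF^{(\epsilon)}\cdot\RR_X$ to get $O(X^{n+1}/(M\log M))$ uniformly in $M$, with the cuspidal remainder $(\FF\setminus\FF^{(\epsilon)})\cdot\RR_X$ contributing $O(\epsilon X^{n+1})$ by the estimates of \S\ref{sec:counting}. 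This mod-$p$ structure is what makes Theorem \ref{thunifproj} provable while \eqref{eqassumedte} remains open; any argument that only sees the condition $p^2 \mid \Delta$ cannot distinguish the two.
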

\begin{proof}
If $(A,B) \in W_p$ gives rise to the binary $n$-ic form $f$, then the
ring $R_f$ is nonmaximal at $p$, which implies that
$p^2\mid\Delta(A,B)=\Delta(f)$.  Let $(A,B)\in W_p$, regarded as an
element of $V(\Z_p)$, correspond to a pair $(I,\delta)$ with $I^2\neq
(\delta)I_f^{n-3}$. Then the reduction of $(A,B)$ modulo $p$ corresponds to
the pair $(I\otimes\F_p,\overline{\delta})$, where $\overline{\delta}$
is the reduction of $\delta$ modulo $p$. From Nakayama's lemma, it
follows that $I^2\otimes\F_p\neq (\overline{\delta})I_f^{n-3}\otimes\F_p$.

Let $(A_1,B_1)\in V(\Z)$ be any element congruent to $(A,B)$ modulo
$p$. Denote the binary $n$-ic form associated to $(A_1,B_1)$ by
$f_1$. If $(A_1,B_1)$ corresponds to the pair $(I_1,\delta_1)$, then
it follows (again from Nakayama's lemma) that $I_1^2\neq
(\delta_1)I_{f_1}^{n-3}$. Thus $(A_1,B_1)\in W_p$.

Also, the set of elements in $W_p$ whose binary $n$-ic invariants are
divisible by $p$ is the preimage under $V(\Zp) \to V(\F_p)$ of the set
of elements in $V(\F_p)$ having binary $n$-ic invariant $0$.  It
follows that $W_p$ is defined via congruence conditions modulo $p$,
i.e., the set $W_p$ is the preimage of some subset of $V(\F_p)$ under
the reduction modulo $p$ map.

To prove the theorem, we start with the fundamental domain $\FF_H$
chosen in \S \ref{sec:funddomains}. For every $0<\epsilon<1$, we pick
a set $\FF^{(\epsilon)}\subset \FF_H$ which is open and bounded and
whose measure is $(1-\epsilon)$ times the measure of $\FF_H$.  Let
$\RR$ be the union of the $\RR^{(r_2),\delta}$ over all possible $r_2$
and $\delta$, and let $\RR_X$ denote the set of elements in $\RR$
having height bounded by $X$. Then, since the set
$\FF^{(\epsilon)}\cdot \RR_X$ is homogeneously expanding with $X$ and
since the reduction of the set $W_p$ modulo $p$ has codimension
greater than $2$ in $V(\F_p)$, we obtain
\begin{equation*}
\#\{\FF^{(\epsilon)}\cdot \RR_X\cap (\cup_{p\geq M} W_p)\}=O(X^{n+1}/M\log M)+O(X^n)
\end{equation*}
from an immediate application of \cite[Theorem 3.3]{manjul-geosieve}.
We further obtain
\begin{equation*}
\#\{(\FF\backslash\FF^{(\epsilon)})\cdot \RR_X\cap V(\Z)^{\irr}\}=O(\epsilon X^{n+1})
\end{equation*}
from the methods of the previous section. The first assertion of the
theorem follows. The second assertion follows in an identical fashion
by starting with $\FF_J$ instead of $\FF_H$.
\end{proof}

We now have the following theorem.

\begin{thm}\label{thsieveproj}
Let $r_2$ be an integer such that $0\leq r_2\leq (n-1)/2$ and let
$\delta\in\{\pm1\}^{n-2r_2}\times\{1\}^{r_2}$ be fixed.  Let $\Lambda$
be a very large collection of local specifications such that
$\Lambda_\infty=V(\R)^{(r_2),\delta}$. Then we have
\begin{align*}
N_H(\V(\Lambda),X)&=\displaystyle
\frac{1}{\sigma(r_2)}\Vol(\FF_H\cdot
\RR_H^{(r_2),\delta}(X))\prod_p \Vol(\Lambda_p)+o(X^{n+1}), \textrm{ and}\\
N_J(\V(\Lambda),X)&=\displaystyle
\frac{1}{\sigma'(r_2)}\Vol(\FF_J\cdot
\RR_J^{(r_2),\delta}(X))\prod_p \Vol(\Lambda_p)+o(X^{n+1}),
\end{align*}
where the volumes of sets in $V(\Z_p)$ are computed with respect to
the Euclidean measure normalized so that $V(\Z_p)$ has measure 1.
\end{thm}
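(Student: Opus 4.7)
The plan is a sieve: approximate $\V(\Lambda)$ by the truncations obtained from imposing local conditions at only finitely many primes, apply Theorem \ref{thmainzcount} to each, and control the error by the uniform tail estimate of Theorem \ref{thunifproj}.

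For each $Y \geq 1$, I would define the truncation $\Lambda^{(Y)}$ by setting $\Lambda^{(Y)}_\nu := \Lambda_\nu$ for every place $\nu \leq Y$ and $\Lambda^{(Y)}_p := V(\Z_p)$ for every prime $p > Y$. Since each $\Lambda_p$ is open with measure-zero boundary in the compact space $V(\Z_p)$, it can be sandwiched from inside and outside by finite unions of cosets of $p^k V(\Z_p)$ whose $p$-adic volumes agree in the limit. Hence, up to arbitrarily small volume error, $\V(\Lambda^{(Y)})$ is a finite disjoint union of translates of sublattices of $V(\Z)$. Applying Theorem \ref{thmainzcount} to each coset and summing yields
\begin{equation*}
N_H(\V(\Lambda^{(Y)}),X) = \frac{1}{\sigma(r_2)} \Vol(\FF_H \cdot \RR_H^{(r_2),\delta}(X)) \prod_{p \leq Y} \Vol(\Lambda_p) + o_{Y}(X^{n+1}),
\end{equation*}
with the analogous statement for $N_J$ in which $\sigma'(r_2)$ and $\FF_J \cdot \RR_J^{(r_2),\delta}(X)$ replace their height counterparts.

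To pass from $\V(\Lambda^{(Y)})$ to $\V(\Lambda)$, I would invoke the hypothesis that $\Lambda$ is very large: there is a threshold $p_0$ such that for every prime $p > p_0$, the set $\Lambda_p$ contains $V(\Z_p)^{\proj} = V(\Z_p) \setminus W_p$, and hence $V(\Z_p) \setminus \Lambda_p \subseteq W_p$. Consequently,
\begin{equation*}
\V(\Lambda^{(Y)}) \setminus \V(\Lambda) \;\subseteq\; \bigcup_{p > Y} W_p \qquad \text{for every } Y \geq p_0,
\end{equation*}
and Theorem \ref{thunifproj} bounds the number of absolutely irreducible orbits of bounded height or Julia invariant in this union by $O(X^{n+1}/Y^{1-\epsilon}) + o(X^{n+1})$.

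Combining these estimates gives
\begin{equation*}
N_H(\V(\Lambda),X) = \frac{1}{\sigma(r_2)} \Vol(\FF_H \cdot \RR_H^{(r_2),\delta}(X)) \prod_{p \leq Y} \Vol(\Lambda_p) + O(X^{n+1}/Y^{1-\epsilon}) + o_{Y}(X^{n+1}).
\end{equation*}
The same tail estimate forces $\sum_p \Vol(W_p)$ to converge, so $\prod_{p \leq Y} \Vol(\Lambda_p) \to \prod_p \Vol(\Lambda_p)$ as $Y \to \infty$. A standard $\liminf$/$\limsup$ argument in which $Y$ is sent to infinity after $X$ then yields the asymptotic for $N_H(\V(\Lambda),X)$, and the identical argument delivers the asymptotic for $N_J(\V(\Lambda),X)$. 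The main obstacle in this strategy is the uniform-in-$p$ tail bound on non-projective points, but this is exactly what Theorem \ref{thunifproj} provides; once it is in hand, the remaining sieve bookkeeping is routine.
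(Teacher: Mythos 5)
Your proposal is correct and is essentially the argument the paper intends: the paper proves this theorem by citing the standard sieve (deducing the analogue of \cite[Theorem 2.21]{arulmanjul-bqcount} from \cite[Theorem 2.13]{arulmanjul-bqcount}), which is exactly your scheme of truncating at primes $p\leq Y$, applying Theorem \ref{thmainzcount} to the resulting lattice cosets, and controlling the discrepancy via Theorem \ref{thunifproj} together with the very-large hypothesis $V(\Z_p)\setminus\Lambda_p\subseteq W_p$ for large $p$. The only cosmetic imprecision is attributing the convergence of $\prod_p\Vol(\Lambda_p)$ to the tail estimate itself rather than to the local volume bound $\Vol(V(\Z_p)\setminus V(\Z_p)^{\proj})=O(p^{-2})$, but this does not affect the argument.
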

\noindent The first estimate asserted by Theorem \ref{thsieveproj}
follows from Theorem \ref{thunifproj} just as \cite[Theorem
  2.21]{arulmanjul-bqcount} follows from \cite[Theorem
  2.13]{arulmanjul-bqcount}. The second estimate follows from a proof
identical to that of Theorem \ref{thbcfJcount} (which itself uses the
methods of the proof of \cite[Theorem 2.21]{arulmanjul-bqcount}).

\subsection{Sieving to acceptable sets (conditional on a tail 
estimate)} \label{sec:sfsieve}

Let $\Lambda$ be an acceptable collection of local specifications with
$\Lambda_\infty=V(\R)^{(r_2),\delta}$. Then we have the following
theorem whose proof is identical to the proof of the upper bound in 
\cite[Theorem 2.21]{arulmanjul-bqcount}:
\begin{thm}\label{thsieveupperbound}
We have
\begin{align*}
N_H(\V(\Lambda),X)&\leq\displaystyle
\frac{1}{\sigma(r_2)}\Vol(\FF_H\cdot \RR_H^{(r_2),\delta}(X))\prod_p
\Vol(\Lambda_p)+o(X^{n+1}), \textrm{ and}\\
N_J(\V(\Lambda),X)&\leq\displaystyle
\frac{1}{\sigma(r_2)}\Vol(\FF_J\cdot \RR_J^{(r_2),\delta}(X))\prod_p
\Vol(\Lambda_p)+o(X^{n+1}),
\end{align*}
where the volumes of sets in $V(\R)$ are computed with respect to
Euclidean measure normalized so that $V(\Z)$ has covolume $1$ and the
volumes of sets in $V(\Z_p)$ are computed with respect to the
Euclidean measure normalized so that $V(\Z_p)$ has volume 1.
\end{thm}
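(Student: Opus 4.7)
The plan is to bound $\V(\Lambda)$ from above by families specified at only finitely many primes, apply Theorem \ref{thmainzcount}, and then let the cutoff grow to infinity. Fix a parameter $M$ and define a truncated collection $\Lambda^M$ by $\Lambda_p^M := \Lambda_p$ for $p < M$, $\Lambda_p^M := V(\Z_p)$ for $p \geq M$, and $\Lambda_\infty^M := \Lambda_\infty = V(\R)^{(r_2),\delta}$. Since $\Lambda_p \subseteq V(\Z_p)$ at every finite prime, we have $\V(\Lambda) \subseteq \V(\Lambda^M)$, and therefore the pointwise bounds
\begin{equation*}
N_H(\V(\Lambda),X) \leq N_H(\V(\Lambda^M),X) \quad \text{and} \quad N_J(\V(\Lambda),X) \leq N_J(\V(\Lambda^M),X)
\end{equation*}
hold for every $X$.

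Because $\V(\Lambda^M)$ is cut out by only finitely many congruence conditions, it may be written as a finite disjoint union of translates of a finite-index sublattice of $V(\Z)$. Applying Theorem \ref{thmainzcount} to each translate and summing yields
\begin{equation*}
N_H(\V(\Lambda^M),X) = \frac{1}{\sigma(r_2)}\Vol(\FF_H \cdot \RR_H^{(r_2),\delta}(X))\prod_{p < M}\Vol(\Lambda_p) + o_M(X^{n+1}),
\end{equation*}
and the analogous identity (with $\FF_J$, $\RR_J^{(r_2),\delta}$, and $\sigma'(r_2)$) for $N_J$; the error is permitted to depend on $M$.

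To close, I would invoke acceptability directly: for all sufficiently large $p$ the set $\Lambda_p$ contains every $v \in V(\Z_p)$ with $p^2 \nmid \Delta(v)$, and the standard $p$-adic volume estimate $\Vol(\{v \in V(\Z_p) : p^2 \mid \Delta(v)\}) = O(p^{-2})$ implies that the Euler product $\prod_p \Vol(\Lambda_p)$ converges absolutely. Consequently, for any $\epsilon > 0$ one can choose $M$ large enough that $\prod_{p<M}\Vol(\Lambda_p) \leq \prod_p \Vol(\Lambda_p) + \epsilon$. Substituting into the finite-level asymptotic above and sending $X \to \infty$ first, then $\epsilon \to 0$, produces the upper bound in its stated ($\limsup$) form.

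There is no serious obstacle in this argument: the hard analytic work is already packaged into Theorem \ref{thmainzcount}, and the mild technical point is simply to interleave the $X \to \infty$ and $M \to \infty$ limits in the correct order so that the $M$-dependent error $o_M(X^{n+1})$ is negligible at each stage. The only reason the statement is an inequality rather than an equality is that the reverse direction would demand uniform control over the orbits in $\V(\Lambda^M) \setminus \V(\Lambda)$ as $M \to \infty$, which is precisely the unproven tail estimate alluded to in the conditional parts of Theorem \ref{thm:avgforfields} and Theorem \ref{thm:avgfororders}.
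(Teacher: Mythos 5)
Your argument is correct and is essentially the paper's own proof: the paper simply cites the upper-bound half of \cite[Theorem 2.21]{arulmanjul-bqcount}, which is exactly this truncate-at-$M$, apply the finite-level count of Theorem \ref{thmainzcount}, and use convergence of $\prod_p\Vol(\Lambda_p)$ argument, with the limits interleaved in the order you describe. The one imprecision is your claim that $\V(\Lambda^M)$ is a finite disjoint union of translates of a finite-index sublattice: each $\Lambda_p$ is only assumed to be an open set with measure-zero boundary (so possibly an infinite union of congruence classes), and you must first replace each $\Lambda_p$ for $p<M$ by a clopen superset of volume at most $\Vol(\Lambda_p)+\epsilon$ --- a routine compactness/regularity step, carried out in the cited proof and in the proof of Theorem \ref{thbcfJcount} --- before Theorem \ref{thmainzcount} applies.
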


For a prime $p$, let $\W_p$ denote the set of elements in $V(\Z)$ such
that $p^2\mid\Delta$. The following estimates are unknown but likely
to be true:
\begin{equation}\label{eqassumedte}
\begin{array}{rcl}
N_H(\displaystyle\cup_{p\geq M} \W_p,X)
&=&O(X^{n+1}/M^{1-\epsilon}) +o(X^{n+1})\\[.1in]
N_J(\displaystyle\cup_{p\geq M} \W_p,X)
&=&O(X^{n+1}/M^{1-\epsilon}) +o(X^{n+1})
\end{array}
\end{equation}
We now have the following theorem.
\begin{thm}\label{thabcv}
Assume that one of the equations in \eqref{eqassumedte} holds.  Let
$\Lambda$ be an acceptable collection of local specifications with
$\Lambda_\infty=V(\R)^{(r_2),\delta}$. Then we have
\begin{align*}
N_H(\V(\Lambda),X)&=\displaystyle
\frac{1}{\sigma(r_2)}\Vol(\FF_H\cdot \RR_H^{(r_2),\delta}(X))\prod_p
\Vol(\Lambda_p)+o(X^{n+1}), \textrm{ and}\\
N_J(\V(\Lambda),X)&=\displaystyle
\frac{1}{\sigma(r_2)}\Vol(\FF_J\cdot \RR_J^{(r_2),\delta}(X))\prod_p
\Vol(\Lambda_p)+o(X^{n+1}),
\end{align*}
where the volumes of sets in $V(\R)$ are computed with respect to
Euclidean measure normalized so that $V(\Z)$ has covolume $1$ and the
volumes of sets in $V(\Z_p)$ are computed with respect to the
Euclidean measure normalized so that $V(\Z_p)$ has volume 1.
\end{thm}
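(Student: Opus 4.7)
The upper bound asserted in Theorem \ref{thabcv} is precisely Theorem \ref{thsieveupperbound}, so the plan is to establish a matching lower bound via a cutoff argument driven by the assumed tail estimate \eqref{eqassumedte}. The $N_H$-case and the $N_J$-case run in parallel, so I will describe only the proof for $N_H$.

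For each integer $M$, I would introduce the truncated collection $\Lambda^{(M)}$ by setting $\Lambda^{(M)}_\infty = \Lambda_\infty$, $\Lambda^{(M)}_p = \Lambda_p$ for $p < M$, and $\Lambda^{(M)}_p = V(\Z_p) \setminus \{\Delta = 0\}$ for $p \geq M$. For $M$ sufficiently large, $\Lambda^{(M)}$ is very large in the sense of \S\ref{sec:sieve}, since at each prime $p \geq M$ the set $\Lambda^{(M)}_p$ manifestly contains every projective element with primitive invariant. Clearly $\V(\Lambda) \subset \V(\Lambda^{(M)})$, and the acceptability of $\Lambda$ yields
$$\V(\Lambda^{(M)}) \setminus \V(\Lambda) \subset \bigcup_{p \geq M} \W_p$$
once $M$ is large enough, because for such primes $V(\Z_p) \setminus \Lambda_p$ is contained in $\W_p$ by definition of acceptability.

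The assumed estimate \eqref{eqassumedte} then bounds $N_H(\V(\Lambda^{(M)}) \setminus \V(\Lambda), X)$ by $O(X^{n+1}/M^{1-\epsilon}) + o(X^{n+1})$, while Theorem \ref{thsieveproj}, applied to the very large collection $\Lambda^{(M)}$, gives
$$N_H(\V(\Lambda^{(M)}), X) = \frac{1}{\sigma(r_2)}\Vol(\FF_H \cdot \RR_H^{(r_2),\delta}(X)) \prod_{p < M} \Vol(\Lambda_p) + o(X^{n+1}).$$
Subtracting these two estimates produces a lower bound on $N_H(\V(\Lambda), X)$ in terms of the partial Euler product $\prod_{p<M} \Vol(\Lambda_p)$, with an error of size $O(X^{n+1}/M^{1-\epsilon}) + o(X^{n+1})$. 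Since acceptability forces $\Vol(\Lambda_p) \geq 1 - O(p^{-2})$ for large $p$, the partial Euler products converge to $\prod_p \Vol(\Lambda_p)$ as $M \to \infty$; dividing by $X^{n+1}$ and then letting $M \to \infty$ therefore matches the upper bound of Theorem \ref{thsieveupperbound} and closes the argument. The $N_J$-case is handled identically using the second line of \eqref{eqassumedte} and the second half of Theorem \ref{thsieveproj}.

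The only genuine obstacle is the approximation step: without \eqref{eqassumedte}, the contribution of $\V(\Lambda^{(M)}) \setminus \V(\Lambda)$ cannot be controlled uniformly in $M$, and this is precisely why the statement is conditional. All other ingredients—the very-large count in Theorem \ref{thsieveproj}, the convergence of the Euler product, and the cuspidal and reducibility estimates of \S\ref{sec:counting}—are already in place.
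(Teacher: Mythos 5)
Your core argument --- truncating the collection at $M$ to get a very large collection $\Lambda^{(M)}$, applying Theorem \ref{thsieveproj} to it, controlling $\V(\Lambda^{(M)})\setminus\V(\Lambda)$ by $\bigcup_{p\geq M}\W_p$ via the assumed tail estimate, and letting $M\to\infty$ --- is exactly the standard squarefree-sieve argument the paper invokes (by reference to the way \cite[Theorem 2.21]{arulmanjul-bqcount} follows from \cite[Theorem 2.13]{arulmanjul-bqcount}), and that part is fine.

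However, there is a genuine gap in how you treat the hypothesis. The theorem assumes that \emph{one} of the two equations in \eqref{eqassumedte} holds and concludes \emph{both} asymptotics. Your proof uses the first line of \eqref{eqassumedte} for the $N_H$-statement and the second line for the $N_J$-statement, so under the stated hypothesis you only obtain one of the two conclusions; the sets $\W_p$ are the same, but $N_H$ counts $\SL_n(\Z)$-orbits weighted over $\FF_H$ while $N_J$ counts $G(\Z)$-orbits weighted over $\FF_J$, so a tail bound for one ordering does not formally transfer to the other. The paper spends the bulk of its proof on precisely these cross-implications: assuming only the height tail estimate, the Julia-ordered assertion is deduced by the method of Theorem \ref{thbcfJcount} (split the Julia fundamental domain into a bounded piece, where the height-based sieve applies, and a cuspidal piece controlled by the estimates of \cite{BY-Julia}); assuming only the Julia tail estimate, the height-ordered assertion is deduced by covering $\FF_H\cdot\RR_H^{(r_2),\delta}(X)\setminus\{\Delta=0\}$ up to measure $\epsilon X^{n+1}$ by finitely many $G(\Z)$-fundamental domains, applying the Julia-ordered tail bound on each, and letting $M\to\infty$, then $\epsilon\to 0$, then the number of domains tend to infinity. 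You would need to add one of these transfer arguments (or strengthen the hypothesis to assume both lines of \eqref{eqassumedte}) to prove the theorem as stated.
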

\begin{proof}
We first assume that the first equation in \eqref{eqassumedte}
holds. Then the first assertion of the theorem follows just as
\cite[Theorem 2.21]{arulmanjul-bqcount} follows from \cite[Theorem
  2.13]{arulmanjul-bqcount}. The second estimate follows from a proof
identical to that of Theorem \ref{thbcfJcount}.

We now assume that the second equation in \eqref{eqassumedte}
holds. Then the second assertion of the theorem follows just as
\cite[Theorem 2.21]{arulmanjul-bqcount} follows from \cite[Theorem
  2.13]{arulmanjul-bqcount}.  To prove the first assertion, we use
methods from the proof of \cite[Lemma 3.7]{manjul-geosieve}. The set
$\FF_H\cdot \RR_H^{(r_2),\delta}(X)\setminus\{\Delta=0\}$ can be
covered with countably many fundamental domains for the action of
$G(\Z)$ on $V(\R)^{(r_2),\delta}$. Therefore, for any $\epsilon>0$,
there exist $s$ fundamental domains for the action of $G(\Z)$ on
$V(\R)^{(r_2),\delta}$ whose union covers all but measure $\epsilon
X^{n+1}$ of the finite measure multiset $\FF_H\cdot
\RR_H^{(r_2),\delta}(X)$, where $s$ is independent of $X$. (To ensure
that $s$ is independent of $X$, we merely choose $s$ fundamental
domains when $X=1$, and then scale these fundamental domains for large
$X$.) Once again arguments in the proof of \cite[Theorem
  2.21]{arulmanjul-bqcount} imply the bound
\begin{equation*}
\frac{N_H(\V(\Lambda),X)}{X^{n+1}}\geq 
\frac{1}{\sigma(r_2)}(\Vol(\FF_H\cdot \RR_H^{(r_2),\delta}(1))-\epsilon)
\prod_{p<M}
\Vol(\Lambda_p)+O(s/M^{1-\delta})+o(s).
\end{equation*}
Letting $M$ tend to $\infty$, and then $\epsilon$ to $0$, and then $s$
to $\infty$ yields the required lower bound. The upper bound follows
from Theorem \ref{thsieveupperbound}. This concludes the proof of
Theorem \ref{thabcv}.
\end{proof}

\section{Proof of the main theorems} \label{sec:proofmain}

We are now ready to prove Theorems \ref{thm:avgforfields}-\ref{thm:avgfororders}. To do so, we establish Theorem \ref{thmacceptablefam}, which determines an upper bound for the average sizes of the $2$-torsion subgroup in the class groups of acceptable families of orders of fixed signature ordered by height or by Julia invariant. For certain {\em very large} families, we obtain that the average sizes are in fact equal to 1; for all other acceptable families, the lower bound being equal to 1 is dependent on the tail estimates described in \eqref{eqassumedte}. The proof of Theorem \ref{thmacceptablefam} involves the computation of local volumes in order to determine the number of absolutely irreducible lattice points in $\FF_H$ of bounded height and $\FF_J$ of bounded Julia invariant. The results of \S \ref{sec:param} then allow us to conclude the theorem, and it immediately implies Theorems \ref{thm:avgforfields}, \ref{thm:avgforcubicfields}, and \ref{thm:avgfororders}. We obtain Theorem \ref{cor:oddclassno} from combining Theorems \ref{thm:avgforfields} and \ref{thm:avgforcubicfields} with the results of \cite{birchmerriman}.

We adopt the notation of the introduction. Recall that for an infinite
collection $\Sigma$ of local specifications, $\U(\Sigma)$ is the
associated set of integral binary $n$-ic forms, and acceptable sets
$\U(\Sigma)$ give rise to acceptable families $\Sigma_H \subseteq
\fR_H$ (and acceptable families $\Sigma_J \subseteq \fR_J$ if
$\U(\Sigma)$ is also $\SL_2(\Z)$-invariant). We now describe the
collections for which we obtain equalities on the average sizes in
Theorem \ref{thm:avgfororders}.

\begin{defn}\label{6.1} We say that $\Sigma = (\Sigma_\nu)_{\nu}$ and $\U(\Sigma)$ are {\em very large} if, for all sufficiently large primes $p$, the set
$\Sigma_p$ is precisely $U(\Z_p)\setminus pU(\Z_p)$. We say that a family $\Sigma_H\subseteq \fR_{H}$ is {\em very large} if it is defined by a very large family $\U(\Sigma)$, i.e., $\fR_H = \{R_f \mid f \in \U(\Sigma)\}.$ A family $\Sigma_J \subseteq \fR_J$ is {\em very large} if it is defined by a very large $\SL_2(\bZ)$-invariant family $\U(\Sigma)$. 
\end{defn}

\begin{thm}\label{thmacceptablefam}
Fix an integer $n$ and a signature $(r_1,r_2)$ with $r_1+2r_2=n$. Let
$\fR_1\subset\fR_H^{r_1,r_2}$ be a family of rings that arises from an acceptable
set of integral binary $n$-ic forms and let $\fR_2\subset\fR_J^{r_1,r_2}$ be a
family of rings that arises from an acceptable $\SL_2(\Z)$-invariant
set of binary $n$-ic forms. Then:
\begin{itemize}
\item[\rm (a)] The average sizes of
$$|\Cl_2(\cO)| - \frac{1}{2^{r_1+r_2-1}} |\mathcal{I}_2(\cO)|$$ over
  $\cO \in \fR_1$ ordered by height and over $\cO \in \fR_2$ ordered
  by Julia invariant are bounded above by 1.
  \item[\rm (b)] The average sizes of
$$|\Cl_2^+(\cO)| - \frac{1}{2^{r_2}} |\mathcal{I}_2(\cO)|$$ over $\cO
  \in \fR_1$ ordered by height and over $\cO \in \fR_2$ ordered by
  Julia invariant are bounded above by 1.
\end{itemize}
If we assume that $\fR_1$ and $\fR_2$ arise from very
large sets of binary $n$-ic forms, then the average sizes in {\rm(a)} and
{\rm (b)} are equal to $1$, independent of the choice of very large set. Furthermore, conditional on the tail estimates in \eqref{eqassumedte}, the average sizes in {\rm (a)} and {\rm (b)} are indeed equal to 1 for all $\fR_1$ or $\fR_2$ arising from any acceptable set of binary $n$-ic forms. 
\end{thm}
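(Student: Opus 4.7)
The plan is to translate both averages into counts of absolutely irreducible projective $\SL_n(\Z)$-orbits (resp.\ $G(\Z)$-orbits) on $V(\Z)$, and then apply the orbit asymptotics of \S\ref{sec:counting}--\S\ref{sec:sieve} together with the binary $n$-ic counts of \S\ref{sec:binarynics}. Concretely, for each integral primitive irreducible nondegenerate $f \in \U(\Sigma)$ with $\cO_f$ of signature $(r_1,r_2)$, Proposition~\ref{prophr} identifies the total number of projective $\SL_n(\Z)$-orbits in $\pi^{-1}(f)$ with $|\rmH(\cO_f)| = 2^{r_1+r_2-1}|\Cl_2(\cO_f)|$, while Proposition~\ref{reducible} identifies the reducible ones with $|\cI_2(\cO_f)|$. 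Subtracting gives that the number of \emph{absolutely irreducible} projective orbits above $f$ equals $2^{r_1+r_2-1}|\Cl_2(\cO_f)| - |\cI_2(\cO_f)|$, so dividing by $2^{r_1+r_2-1}$ produces exactly the summand in (a). For (b), every reducible orbit has $\delta$ a square in $(R_f \otimes \Q)^\times$ and is therefore totally positive; restricting to the single real component $\delta = (1,\ldots,1) \in \T(r_2)$ and invoking Lemma~\ref{lem:sizeHplus} yields the analogous identity $2^{r_2}|\Cl_2^+(\cO_f)| - |\cI_2(\cO_f)|$ for the count of absolutely irreducible projective orbits above $f$ with totally positive $\delta$.

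Summing these identities over $f \in \U(\Sigma)^\irr_{H<X}$ (resp.\ over $\SL_2(\Z)$-orbits with $J(f)<X$ in the $\SL_2$-invariant setting) transforms the numerators into orbit counts $N_H(\V(\Lambda),X)$ (resp.\ $N_J(\V(\Lambda),X)$), where $\Lambda_p$ consists of the projective elements of $V(\Z_p)$ whose invariant lies in $\Sigma_p$, and $\Lambda_\infty$ is $\bigsqcup_\delta V(\R)^{(r_2),\delta}$ for (a) or $V(\R)^{(r_2),(1,\ldots,1)}$ for (b). When $\Sigma$ is very large, any $v\in V(\Z_p)$ with primitive invariant is automatically projective for all sufficiently large $p$, so $\Lambda$ is very large in the sense of Definition~6.1 and Theorem~\ref{thsieveproj} gives the count \emph{unconditionally}. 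For general acceptable $\Sigma$, Theorem~\ref{thsieveupperbound} supplies the upper bound unconditionally, and Theorem~\ref{thabcv} upgrades this to an equality assuming the tail estimates \eqref{eqassumedte}. The denominators are handled by Theorem~\ref{thbcfHcount} for height and Theorem~\ref{thbcfJcount} for Julia invariant.

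The remaining task is to check that the volume ratios collapse to the predicted answer $1$. Using the Jacobian change of variables of Theorem~\ref{thmjac}, the archimedean volume $\Vol(\FF_H \cdot \RR_H^{(r_2),\delta}(X))$ equals $\Vol(\SL_n(\Z)\backslash \SL_n(\R)) \cdot \Vol(U(\R)^{(r_2)}_{H<X})$ times a Jacobian factor that is independent of $\delta$; combined with $\sigma(r_2) = 2^{r_1+r_2-1}$ from Example~\ref{ex:paramreal}, the $2^{r_1-1}$ summands in (a) contribute equally to yield an archimedean factor of $2^{r_1+r_2-1}$, while in (b) the unique term contributes $2^{r_2}$. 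On the finite side, for each prime $p$ the ratio $\Vol(\Lambda_p)/\Vol(\Sigma_p)$ equals the local average of $|\rmH(\cO_f \otimes \Z_p)|$ weighted by another local Jacobian, and these $p$-adic factors are designed to cancel precisely against the corresponding local densities of binary $n$-ic forms, leaving no Euler product in the final ratio. After all cancellations, the average of the number of absolutely irreducible projective orbits per $f$ equals $2^{r_1+r_2-1}$ (resp.\ $2^{r_2}$), which after the normalizing division yields an average of exactly $1$.

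The main technical obstacle is the Jacobian/local-density bookkeeping: matching the local and archimedean volume factors so that the normalizing constants exactly produce $1$ requires a careful parallel computation on both the $U$- and $V$-sides, mirroring the approach used in \cite{BV3} for cubic orders. A secondary obstacle is that without \eqref{eqassumedte}, Theorem~\ref{thsieveupperbound} yields only an upper bound, because the mass of $V(\Z)$-orbits above forms with large square discriminant divisors could in principle inflate the average; this is why the lower bound in the acceptable (non-very-large) case must remain conditional, while the very-large case escapes this issue by using Theorem~\ref{thsieveproj} whose tail estimate for projective orbits is unconditional.
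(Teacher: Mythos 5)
Your proposal is correct and follows essentially the same route as the paper's own proof: Propositions \ref{prophr} and \ref{reducible} together with Lemma \ref{lem:sizeHplus} convert the class-group differences into counts of absolutely irreducible projective orbits, Theorems \ref{thsieveproj}, \ref{thsieveupperbound}, and \ref{thabcv} (numerators) and Theorems \ref{thbcfHcount} and \ref{thbcfJcount} (denominators) supply the asymptotics, and the volume bookkeeping is exactly the paper's Theorem \ref{thmprodvol}. The one imprecision is in your attribution of the constants: the per-$\delta$ factor $2^{r_2}$ is not purely archimedean but arises as the $2$-adic mass $m_2=2^{n-1}$ of Lemma \ref{lemBK} divided by the real stabilizer $\sigma(r_2)=2^{r_1+r_2-1}$, with the remaining local Jacobians and volumes cancelling via the product formula and the Tamagawa number of $\SL_n$ being $1$ --- so the Euler product does leave a residual factor at $p=2$, but your final totals $2^{r_1+r_2-1}$ and $2^{r_2}$ are the correct ones.
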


We will prove Theorem \ref{thmacceptablefam} in the following sections.

\subsection{Computing the product of local volumes}
We first prove a statement about the ``compatibility of measures''.
Let $\mathrm{d}v$ and $\mathrm{d}f$
denote Euclidean measures on $V$ and $U$, respectively, normalized so
that $V(\Z)$ and $U(\Z)$ have covolume $1$. Let $\omega$ be an
algebraic differential form that generates the rank $1$ module of
top degree left-invariant differential forms on $\SL_n$ over $\Z$.  We
have the following theorem, whose proof is identical to that of
\cite[Props.~3.11 \& 3.12]{arulmanjul-bqcount}.
\begin{thm}\label{thmjac}
  Let $T$ be $\R$, $\C$, or $\Z_p$ for some prime $p$.  Let $s:U(T)\to
  V(T)$ be a continuous section for $\pi$, i.e., a continuous
  function such that the invariant binary $n$-ic
  of $w_{f}:=s(f)$ is $f$.  Then there exists a rational nonzero
  constant $\J$ such that for any measurable function $\phi$ on
  $V(T)$, we have
\begin{align*}
    \int_{v \in \SL_n(T)\cdot s(U(T))}\phi(v)\dd v &=  |\J|\int_{U(T)}\displaystyle\int_{\SL_n(T)} \phi(g\cdot w_{f})\,\omega(g) \dd f \\[10pt]
    \int_{V(T)}\phi(v) \dd v&= |\J| \int_{\substack{f\in U(T)\\\Delta(f)\neq 0}}\Bigl(\sum_{v\in\textstyle{\frac{V(T)(f)}{\SL_n(T)}}}\frac{1}{|\Stab_{\SL_n(T)}(v)|}\int_{g\in\SL_n(T)}\phi(g\cdot v)\omega(g)\Bigr) \dd f
\end{align*}
where we regard $\SL_n(T)\cdot s(R)$ as a multiset and
$\frac{V(T)(f)}{\SL_n(T)}$ denotes a set of representatives for the
action of $\SL_n(T)$ on elements in $V(T)$ having invariant $f$.
\end{thm}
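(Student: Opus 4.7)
The plan is to establish Theorem \ref{thmjac} via a Jacobian change-of-variables computation, following the template of \cite{arulmanjul-bqcount} Propositions 3.11 and 3.12. The first step is to verify the measure-compatibility ingredients. The action of $g \in \SL_n(T)$ on $V = T^2 \otimes \Sym_2(T^n)$ factors as two copies of the symmetric-square representation on $\Sym_2(T^n)$, each of which has determinant $\det(g)^{n+1}$ as a linear endomorphism of an $n(n+1)/2$-dimensional space. The total determinant of $g$ acting on $V$ is therefore $\det(g)^{2(n+1)} = 1$ for $g \in \SL_n$, so the Euclidean measure $\mathrm{d}v$ is $\SL_n(T)$-invariant. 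The top form $\omega$ is left-invariant on $\SL_n$ by construction, and $\mathrm{d}f$ is translation-invariant on $U(T)$.

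Next, I would consider the map $\mu: \SL_n(T) \times U(T) \to V(T)$ given by $\mu(g, f) = g \cdot s(f)$. A dimension count gives $\dim \SL_n + \dim U = (n^2-1) + (n+1) = n(n+1) = \dim V$, and on the open locus $\{\Delta(f) \neq 0\}$ the map is generically a local isomorphism onto the multiset $\SL_n(T) \cdot s(U(T))$. Writing $\mu^{*}(\mathrm{d}v) = \Jac(g,f)\,\omega(g) \wedge \mathrm{d}f$, left-invariance of $\omega$ combined with $\SL_n$-invariance of $\mathrm{d}v$ immediately forces $\Jac$ to be independent of $g$. A homogeneity and weight argument --- tracking the scaling behavior of both sides under the dilation $f \mapsto \lambda f$ together with the corresponding dilation on $V$ --- pins down $\Jac(f)$ to have degree zero in the coefficients of $f$, and hence to be a rational constant. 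Evaluating this constant at a single convenient point (for instance, at a form whose image under $s$ has especially simple shape) exhibits it as a nonzero rational number $\J$. This establishes the first identity with $|\J|$ the absolute value of the determinant of the change of variables.

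The second identity then follows by decomposing $V(T) \setminus \{\Delta = 0\}$ into $\SL_n(T)$-orbits, which are naturally indexed by elements $f \in U(T)$ with $\Delta(f) \neq 0$ together with an orbit choice within the fiber. On each orbit, I apply the first identity with $s$ replaced by a chosen orbit representative $v \in V(T)(f)$; the factor $1/|\Stab_{\SL_n(T)}(v)|$ accounts for the redundancy with which the parameterization $g \mapsto g \cdot v$ covers the orbit of $v$. Summing over orbit representatives and integrating over $f$ then recovers the total integral over $V(T)$.

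The principal technical obstacle will be verifying that $\Jac$ is literally a rational constant rather than a rational function involving $f_0 = \det(A)$, since the explicit section in \S\ref{sec:param} is naturally defined only away from the vanishing of $f_0$. The careful bookkeeping of weights --- and, if necessary, the use of several local sections patched together --- constitutes the core technical step, but it is directly parallel to the analogous computations in \cite{arulmanjul-bqcount} and proceeds uniformly across $T = \R$, $\C$, and $\Z_p$, ultimately yielding the same rational constant $\J$ (which is intrinsic to the underlying algebraic map $\mu$).
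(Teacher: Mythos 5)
Your proposal is correct and follows essentially the same route as the paper, which proves Theorem \ref{thmjac} by declaring its proof identical to that of Propositions 3.11 and 3.12 of \cite{arulmanjul-bqcount} --- precisely the Jacobian change-of-variables argument you reconstruct (invariance of $\dd v$ under $\SL_n$, constancy of the Jacobian via invariance and weight considerations, and orbit decomposition with stabilizer factors for the second identity). Your dimension count and the observation that the symmetric-square action has determinant $\det(g)^{n+1}$ on each factor are the right ingredients, and your flagged concern about the section only being defined where $f_0\neq 0$ is handled exactly as you suggest.
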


For $r_2 \in\{1,\ldots,(n-1)/2\}$ and for $f\in V(\Z_p)$ we define local masses
\begin{align*}
m_p(f) := 
\frac{|(R_f^\times/(R_f^\times)^2)_{N\equiv 1}|}{|R_f^\times[2]_{N\equiv 1}|}
&& \textrm{ and } && 
m_\infty(r_2) := \frac{\left|\left((\R^{n-2r_2} \times \C^{r_2})^\times/\left((\R^{n-2r_2} \times \C^{r_2})^\times\right)^2\right)_{N\equiv 1}\right|}{|(\R^{n-2r_2} \times \C^{r_2})^\times[2]_{N\equiv 1}|}.
\end{align*}
We denote the numerator and the denominator of the right hand side in
the equation defining $m_\infty(r_2)$ by $\tau(r_2)$ and
$\sigma(r_2)$, respectively. For a prime $p$, let $\Sigma_p\subset
U(\Z_p)\setminus pU(\Z_p)$ be a non-empty open set whose boundary
has measure $0$. Let $\Lambda_p$ denote the set of projective elements
in $V(\Z_p)$ whose invariant binary form belongs to $\Sigma_p$.
We have the following corollary to Theorem \ref{thmjac}:
\begin{cor}\label{corjac}
Let notation be as above. We have
\begin{align*}
\displaystyle\Vol(\FF_H\cdot \RR_H^{(r_2),\delta}(X)) &=
\displaystyle|\J|\Vol(\FF_H)\Vol(U(\R)^{(r_2)}_{H<X}),\\[.2in]
\displaystyle\Vol(\FF_J\cdot \RR_J^{(r_2),\delta}(X)) &=
\displaystyle\frac{\sigma'(r_2)}{\sigma(r_2)}|\J|\Vol(\FF_H)\Vol(\SL_2(\Z)\backslash U(\R)^{(r_2)}_{J<X}), \textrm{ and}\\
\displaystyle\Vol(\Lambda_p) &=\displaystyle|\J|_p \Vol(\SL_n(\Z_p)) 
\int_{f\in\Sigma_p}m_p(f) \dd f,
\end{align*}
where the volumes of $\FF_H$ and $\SL_n(\Z_p)$ are computed with
respect to $\omega$, and $\sigma'(r_2)$ denotes the size of the
stabilizer in $G(\R)$ of a generic element of $V(\R)^{(r_2)}$.
\end{cor}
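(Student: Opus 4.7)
The three identities all follow from three applications of the Jacobian change-of-variables formula (Theorem~\ref{thmjac}), each to a suitable indicator function. The arguments parallel the deductions of the analogous volume formulas in \cite{arulmanjul-bqcount}.

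For the first identity, I would apply Theorem~\ref{thmjac} with $T = \R$, the section $s = s_\delta$ from \S\ref{sec:funddomains}, and $\phi$ the indicator function of a set-theoretic fundamental domain $E$ for $\SL_n(\Z)$ acting on $V(\R)^{(r_2),\delta}_{H<X}$. On the right-hand side, for each $f \in U(\R)^{(r_2)}_{H<X}$, the inner integral over $\SL_n(\R)$ selects the $g_n$ for which $g_n \cdot s_\delta(f) \in E$; by left-invariance of $\omega$ this Haar measure equals $\Vol(\FF_H)$. The outer integral then contributes $\Vol(U(\R)^{(r_2)}_{H<X})$, while the left-hand side of Theorem~\ref{thmjac} is $\sigma(r_2)\Vol(E)$ by the multiset convention (since every $\SL_n(\R)$-orbit in $V(\R)^{(r_2),\delta}$ carries generic stabilizer $\sigma(r_2)$). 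Finally, as noted in \S\ref{sec:funddomains}, the multiset $\FF_H\cdot\RR_H^{(r_2),\delta}(X)$ is a $\sigma(r_2)$-fold cover of $E$, so its multiset volume equals $\sigma(r_2)\Vol(E)$, which by the Jacobian formula is exactly $|\J|\Vol(\FF_H)\Vol(U(\R)^{(r_2)}_{H<X})$.

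For the second identity, the strategy is the same but with the $G(\R)$-fundamental set $\RR_J^{(r_2),\delta}$ and the $G(\Z)$-fundamental domain $\FF_J$. I would still apply the $\SL_n(\R)$-Jacobian of Theorem~\ref{thmjac}, this time with $\phi$ the indicator of a set-theoretic fundamental domain $E'$ for $G(\Z)$ on $V(\R)^{(r_2),\delta}_{J<X}$. For each $f$, the inner $\SL_n(\R)$-integral again evaluates to $\Vol(\FF_H)$; however, since $E'$ already has $\SL_2$-symmetry built in, only one representative per $\SL_2(\Z)$-orbit of $f$ contributes, so the outer $f$-integration reduces to an $\SL_2(\Z)$-fundamental set in $U(\R)^{(r_2)}_{J<X}$. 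This yields $\Vol(E') = \sigma(r_2)^{-1}|\J|\Vol(\FF_H)\Vol(\SL_2(\Z)\backslash U(\R)^{(r_2)}_{J<X})$. Since the multiset $\FF_J\cdot\RR_J^{(r_2),\delta}(X)$ is now a $\sigma'(r_2)$-fold cover of $E'$, multiplying through produces the claimed ratio $\sigma'(r_2)/\sigma(r_2)$.

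For the third identity, I would apply the second form of Theorem~\ref{thmjac} with $T = \Z_p$ and $\phi = \mathbf{1}_{\Lambda_p}$. Because $\Lambda_p$ is $\SL_n(\Z_p)$-invariant, the inner expression $\int_{\SL_n(\Z_p)} \phi(g\cdot v)\,\omega(g)$ equals $\Vol(\SL_n(\Z_p))$ on each orbit hitting $\Lambda_p$ and vanishes otherwise. The outer integral over $f$ with $\Delta(f)\neq 0$ is then supported on $\Sigma_p$, and the orbit sum counts the projective $\SL_n(\Z_p)$-orbits of $V(\Z_p)$ with invariant $f$, each weighted by $|\Stab_{\SL_n(\Z_p)}(v)|^{-1}$. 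By Corollary~\ref{cor:orbits}, the set of such orbits is in bijection with $(R_f^\times/(R_f^\times)^2)_{N\equiv 1}$; by Corollary~\ref{cor:stabs}, the stabilizer of each is $R_f^\times[2]_{N\equiv 1}$. The orbit sum therefore equals $m_p(f)$, completing the derivation. The main obstacle I anticipate is correctly tracking the stabilizer factors $\sigma(r_2)$ and $\sigma'(r_2)$ in the first two identities; in particular, the second identity requires that one not conflate the $G(\R)$- and $\SL_n(\R)$-actions, since the ratio $\sigma'(r_2)/\sigma(r_2)$ is nontrivial precisely in the exceptional case $n=3$, $r_2=0$ coming from the extra $\Z/3\Z$-automorphism of a generic real binary cubic form.
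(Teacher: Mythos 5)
Your proposal is correct and follows essentially the same route as the paper: all three identities are applications of Theorem~\ref{thmjac}, with the stabilizer and orbit counts supplied by Corollaries~\ref{cor:stabs} and~\ref{cor:orbits} (for the second identity the paper writes $\FF_J=\FF_2\times\FF_H$ and observes that the invariants of $\FF_2\cdot \RR_J^{(r_2),\delta}(X)$ cover $\SL_2(\Z)\backslash U(\R)^{(r_2)}_{J<X}$ with multiplicity $\sigma'(r_2)/\sigma(r_2)$, which is equivalent to your unfolding over a $G(\Z)$-fundamental domain). The only wording to tighten is in your second identity: the inner $\SL_n(\R)$-integral equals $\Vol(\FF_H)$ not for each individual $f$ but only after summing over an $\SL_2(\Z)$-orbit of forms --- this is exactly the unfolding that produces the factor $\Vol(\SL_2(\Z)\backslash U(\R)^{(r_2)}_{J<X})$ --- and your final formulas are nevertheless correct.
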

\begin{proof}
The first equality follows immediately from Theorem
\ref{thmjac}. Next, note that we have $\FF_J=\FF_2\times\FF_H$, where
$\FF_2$ is a fundamental domain for the action of $\SL_2(\Z)$ on
$\SL_2(\R)$. Let the multiset $I\subset U(\R)$ denote the invariants
of the multiset $\FF_2\cdot \RR_J^{(r_2),\delta}(X)$. Then $I$
generically represents each element of $\SL_2(\Z)\backslash
U(\R)^{(r_2)}_{J<X}$ exactly $\sigma'(r_2)/\sigma(r_2)=s(r_2)$ times,
since $s(r_2)$ is the size of the stabilizer in $\SL_2(\R)$ of an
element in $U(\R)^{(r_2)}$. (We have already seen that $s(r_2)=3$ when
$n=3$ and $r_2=0$ and $s(r_2)=1$ otherwise.) The second equality now
follows immediately from Theorem~\ref{thmjac}.

To obtain the final equality, note that Theorem \ref{thmjac} implies
\begin{equation*}
   \int_{\Lambda_p} \dd v=
|\J|_p\Vol(\SL_n(\Z_p))\displaystyle\int_{f\in\Sigma_p}
\sum_{v\in\frac{\det^{-1}(f)}{\SL_n(\Z_p)}}\frac{1}{|\Stab_{\SL_n(\Z_p)}(v)|} \dd f,
\end{equation*}
where the sum runs over representatives in projective
$\SL_n(\Z_p)$-orbits of $\det^{-1}(f)$. The result now follows from
Corollary \ref{cor:stabs}.
\end{proof}
Denote $n-2r_2$ by $r_1$ so that $r_1+2r_2=n$. By Corollaries
\ref{cor:stabs} and \ref{cor:orbits} and Example \ref{ex:paramreal}, we have
\begin{equation} \label{eq:minfty}
\tau(r_2)=2^{r_1-1}, \quad \sigma(r_2)=2^{r_1+r_2-1}, \quad \textrm{and}  \quad m_\infty(r_2) = 2^{-r_2}.
\end{equation}
In \cite[Lemma 22]{BV3}, the values of $m_p(f)$ are computed for cubic
rings. We now compute these values for degree $n$ rings using a
similar argument.
\begin{lemma}\label{lemBK}
  Let $R$ be a nondegenerate ring of degree $n$ over $\Z_p$. Then
\begin{equation} \label{eq:norm1unitgroup}
\frac{|(R^\times/(R^\times)^2)_{N\equiv 1}|}{|R^\times[2]_{N\equiv 1}|}
\end{equation}
is $1$ if $p\neq 2$ and $2^{n-1}$ if $p=2$.
\end{lemma}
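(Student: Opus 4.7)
The plan is to apply the snake lemma to the short exact sequence
\begin{equation*}
1 \to R^\times_{N=1} \to R^\times \xrightarrow{N} N(R^\times) \to 1
\end{equation*}
together with the squaring maps on each term, yielding the six-term exact sequence
\begin{equation*}
1 \to R^\times_{N=1}[2] \to R^\times[2] \to N(R^\times)[2] \to R^\times_{N=1}/(R^\times_{N=1})^2 \to R^\times/(R^\times)^2 \to N(R^\times)/N(R^\times)^2 \to 1.
\end{equation*}
Taking the alternating product of the orders of these terms will express the quantity $|R^\times_{N=1}/(R^\times_{N=1})^2|/|R^\times_{N=1}[2]|$ in terms of two local Euler-characteristic-type ratios, each of which I will compute directly using the structure of $R^\times$ (or of $N(R^\times)$) as a topological $\Z_p$-module.

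First I would identify $(R^\times/(R^\times)^2)_{N\equiv 1}$ with the image of $R^\times_{N=1}/(R^\times_{N=1})^2$ in $R^\times/(R^\times)^2$, so that by the six-term sequence the two have the same cardinality. Injectivity of the fifth map is equivalent to surjectivity of $R^\times[2] \to N(R^\times)[2] \subseteq \{\pm 1\}$, which holds because some local factor $R_i$ of $R$ has odd degree $n_i$ (as $\sum n_i = n$ is odd), and the element $-1 \in R_i^\times[2]$ satisfies $N(-1) = (-1)^{n_i} = -1$. Identification of the image with the kernel of $R^\times/(R^\times)^2 \to \Z_p^\times/(\Z_p^\times)^2$ then reduces to showing that $N(R^\times)/N(R^\times)^2 \hookrightarrow \Z_p^\times/(\Z_p^\times)^2$ is an isomorphism: for $p = 2$, I would use that every extension of $\Q_2$ of odd degree is unramified, so the component $R_i$ with $n_i$ odd already has surjective norm, forcing $N(R^\times) = \Z_2^\times$; for $p$ odd, the same odd-degree component makes $[\Z_p^\times : N(R^\times)]$ odd (by local class field theory, or by a direct computation on Teichm\"uller lifts showing $\mu_{p-1}[2^\infty] \subseteq N(R^\times)$), so $N(R^\times) \cdot (\Z_p^\times)^2 = \Z_p^\times$ and both $N(R^\times)/N(R^\times)^2$ and $\Z_p^\times/(\Z_p^\times)^2$ have order two.

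Next I would compute the two Euler-characteristic ratios using the structure theorem $R^\times \cong T \times \Z_p^n$, where $T$ is a finite torsion subgroup and $n$ the degree of $R$. Since $|T/T^2| = |T[2]|$ for any finite abelian group $T$,
\begin{equation*}
\frac{|R^\times/(R^\times)^2|}{|R^\times[2]|} = |\Z_p^n/2\Z_p^n| = |\Z_p/2\Z_p|^n = \begin{cases} 1 & p \text{ odd}, \\ 2^n & p = 2. \end{cases}
\end{equation*}
The same analysis applied to $N(R^\times) \subseteq \Z_p^\times$, which is a closed finite-index subgroup and hence of $\Z_p$-rank one, yields $|N(R^\times)/N(R^\times)^2|/|N(R^\times)[2]|$ equal to $1$ for $p$ odd and $2$ for $p = 2$. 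Substituting into the alternating-product identity from the six-term exact sequence, the ratio in the lemma equals
\begin{equation*}
\frac{|R^\times/(R^\times)^2|/|R^\times[2]|}{|N(R^\times)/N(R^\times)^2|/|N(R^\times)[2]|} = \begin{cases} 1/1 = 1 & p \text{ odd}, \\ 2^n/2 = 2^{n-1} & p = 2. \end{cases}
\end{equation*}

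The main obstacle is the identification step in the second paragraph: showing that the two natural interpretations of the subscript ``$N\equiv 1$'' yield the same cardinality requires some care with the image of the norm map. The oddness of $n$ enters essentially by guaranteeing an odd-degree local component whose norm surjects onto the $2$-parts of $\Z_p^\times$ (on both the $2$-torsion and the modulo-squares sides), thereby making the comparison maps between $N(R^\times)$ and $\Z_p^\times$ isomorphic on the relevant quotients and generalizing the cubic case treated in \cite{BV3}.
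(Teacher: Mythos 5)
Your route is sound and genuinely different from the paper's. The paper simply observes that $R^\times$ is a product of a finite group with $\Zp^n$ and that the norm-one subgroup $R^\times_{N=1}$ is a product of a finite group with $\Zp^{n-1}$, and then reads off the Euler-characteristic ratio of that kernel directly, silently identifying $(R^\times/(R^\times)^2)_{N\equiv 1}$ with $R^\times_{N=1}/(R^\times_{N=1})^2$. You instead compute the Euler characteristics of $R^\times$ and of $N(R^\times)$ and divide via the six-term sequence. Your version is longer but has the virtue of making explicit the identification step that the paper glosses over (injectivity of $R^\times_{N=1}/(R^\times_{N=1})^2\to R^\times/(R^\times)^2$, and the comparison of the two readings of ``$N\equiv 1$''), at the cost of having to control the image $N(R^\times)\subseteq\Zp^\times$.

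One ingredient as you state it is false: it is not true that every odd-degree extension of $\Q_2$ is unramified ($\Q_2(2^{1/3})$ is totally ramified of degree $3$). Moreover, $R$ here is only a nondegenerate degree $n$ ring over $\Zp$, not necessarily a maximal order, so you cannot a priori decompose it as a product of rings of integers $R_i$ of local fields. Both issues are repaired by working with scalars: since $\Zp\subset R$, the norm restricted to $\Zp^\times$ is $u\mapsto u^n$, so $N(R^\times)\supseteq(\Zp^\times)^n$. For $p=2$ and $n$ odd this already gives $N(R^\times)=\Z_2^\times$, because the $n$-th power map is surjective on $\Z_2^\times\cong\Z/2\times\Z_2$; for $p$ odd it gives that $[\Zp^\times:N(R^\times)]$ divides the odd number $\gcd(n,p-1)\cdot p^{v_p(n)}$, which is all your argument uses. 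Likewise, surjectivity of $R^\times[2]\to N(R^\times)[2]$ follows from the global element $-1\in R^\times[2]$ with $N(-1)=(-1)^n=-1$, with no reference to a component $R_i$. With these substitutions your argument is complete and yields the stated values $1$ and $2^{n-1}$.
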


\begin{proof}
The unit group of $R^\times$ is the direct product of a finite abelian
subgroup and $\Zp^n$, and the norm $1$ part $R^\times_{N \equiv 1}$ is
also a direct product of a finite abelian group and $\Zp^{n-1}$. 
For $G$ a finite abelian group or $G = \Zp^n$ when $p \neq 2$, we have
$$\frac{|G/G^2|}{|G[2]|} = 1,$$
so the value of
\eqref{eq:norm1unitgroup} is $1$ for $p \neq 2$.  When $p = 2$,
because $2$ is not a unit in $\Z_2$, the $\Z_2$-module $2 \Z_2^{n-1}$
has index $2^{n-1}$ in $\Z_2^{n-1}$ instead, implying that
\eqref{eq:norm1unitgroup} evaluates to $2^{n-1}$.
\end{proof}

It follows that for a fixed prime $p$, the value of $m_p(f)$ is
independent of $f\in U(\Z_p)^\prim$. We denote this value by $m_p$.
We conclude with the following theorem:
\begin{thm}\label{thmprodvol}
We have
\begin{align*}
\displaystyle
\frac{1}{\sigma(r_2)}\Vol(\FF_H\cdot \RR_H^{(r_2),\delta}(X))\prod_p
\Vol(\Lambda_p)&=
2^{r_2}\displaystyle\Vol(U(\R)^{(r_2)}_{H<X})\prod_p\Vol(\Sigma_p) \textrm{ and}\\
\displaystyle\frac{1}{\sigma'(r_2)}
\Vol(\FF_J\cdot \RR_J^{(r_2),\delta}(X))\prod_p
\Vol(\Lambda_p)&=
2^{r_2}
\displaystyle\Vol(\SL_2(\Z)\backslash U(\R)^{(r_2)}_{J<X})\prod_p\Vol(\Sigma_p)
\end{align*}
\end{thm}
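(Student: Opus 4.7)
The plan is to reduce both identities to a single bookkeeping exercise after applying Corollary \ref{corjac}, evaluating the local masses via Lemma \ref{lemBK}, and invoking two standard global identities: the product formula for the rational constant $\J$ and the Tamagawa number of $\SL_n$ being $1$ with respect to $\omega$.

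To begin, I would rewrite the archimedean factor as $|\J|\Vol(\FF_H)\Vol(U(\R)^{(r_2)}_{H<X})$ and each $p$-adic factor as $|\J|\Vol(\SL_n(\Z_p))\int_{\Sigma_p} m_p(f)\,df$, exactly as given by Corollary \ref{corjac}. Since the local condition imposes $\Sigma_p \subset U(\Z_p) \setminus pU(\Z_p)$, every $f \in \Sigma_p$ is primitive and $R_f$ is a nondegenerate rank-$n$ $\Z_p$-algebra, so Lemma \ref{lemBK} applies uniformly: $m_p(f) = m_p$ is independent of $f$, with $m_p = 1$ for $p > 2$ and $m_2 = 2^{n-1}$. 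In particular $\prod_p m_p = 2^{n-1}$ and each $p$-adic integral factors as $m_p \Vol(\Sigma_p)$.

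Next I would collect the $\J$- and $\SL_n$-factors across all places. The product formula $|\J|_\infty \prod_p |\J|_p = 1$ (valid since $\J\in\Q^\times$) handles the infinitely many copies of $|\J|$, while the Tamagawa number identity $\Vol(\FF_H)\prod_p \Vol(\SL_n(\Z_p)) = 1$, valid since $\SL_n$ is simply connected semisimple and $\omega$ is defined over $\Z$, eliminates $\Vol(\FF_H)$ against the product of local $\SL_n$-volumes. Combining these cancellations gives
\[
\Vol(\FF_H\cdot\RR_H^{(r_2),\delta}(X)) \prod_p \Vol(\Lambda_p) = 2^{n-1}\,\Vol(U(\R)^{(r_2)}_{H<X}) \prod_p \Vol(\Sigma_p).
\]
Dividing by $\sigma(r_2) = 2^{r_1+r_2-1} = 2^{n-r_2-1}$ from \eqref{eq:minfty} produces the prefactor $2^{n-1}/2^{n-r_2-1} = 2^{r_2}$, yielding the first claim. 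For the Julia-invariant statement, the only changes are the extra factor $\sigma'(r_2)/\sigma(r_2)$ appearing in Corollary \ref{corjac} and the replacement of $\Vol(U(\R)^{(r_2)}_{H<X})$ by $\Vol(\SL_2(\Z)\backslash U(\R)^{(r_2)}_{J<X})$; dividing by $\sigma'(r_2)$ rather than by $\sigma(r_2)$ again delivers the prefactor $2^{r_2}$.

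The only subtle point is managing the convention under which the constant $|\J|$ appears at every place: it must be interpreted as the local absolute value $|\J|_\nu$ so that the product formula applies cleanly across places. Once that is taken seriously, no further obstacle remains, since all remaining ingredients are either explicit elementary arithmetic using \eqref{eq:minfty} or direct invocations of results already established above.
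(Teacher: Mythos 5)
Your proposal is correct and follows essentially the same route as the paper: apply Corollary \ref{corjac} at each place, use Lemma \ref{lemBK} to get $\prod_p m_p = 2^{n-1}$, cancel via the product formula for $\J$ and the Tamagawa number identity $\Vol(\FF_H)\prod_p\Vol(\SL_n(\Z_p))=1$, and divide by $\sigma(r_2)=2^{n-r_2-1}$ (resp.\ $\sigma'(r_2)$, absorbing the extra $\sigma'(r_2)/\sigma(r_2)$ in the Julia case) to produce the prefactor $2^{r_2}$. This is exactly the paper's argument, including your remark about interpreting $|\J|$ as the local absolute value $|\J|_\nu$ at each place.
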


\begin{proof}
From Corollary \ref{corjac} and Lemma \ref{lemBK}, we obtain
\begin{align}
&\frac{1}{\sigma(r_2)}\Vol(\FF_H\cdot \RR_H^{(r_2),\delta}(X))\prod_p
\Vol(\Lambda_p) \nonumber \\
&\qquad \qquad = \frac{1}{\sigma(r_2)}|\J|\Vol(\FF_H)\Vol(U(\R)^{(r_2)}_{H<X})
\prod_p|\J|_p\Vol(\SL_n(\Z_p))m_p\Vol(\Sigma_p), \label{eq:prodvol1}
\end{align}
and
\begin{align}
&\frac{1}{\sigma'(r_2)} \Vol(\FF_J\cdot
  \RR_J^{(r_2),\delta}(X))\prod_p \Vol(\Lambda_p) \nonumber \\
 &\qquad \qquad =
\frac{1}{\sigma(r_2)}|\J|\Vol(\FF_H)\Vol(\SL_2(\Z)\backslash
  U(\R)^{(r_2)}_{J<X}) \prod_p|\J|_p\Vol(\SL_n(\Z_p))m_p\Vol(\Sigma_p). \label{eq:prodvol2}
\end{align}
We simplify the right hand side of these expressions by noting that
\begin{align}
 |\J|\prod_p|\J|_p&=1, \label{eq:Jacproduct}\\
 \Vol(\FF_H)\prod_p\Vol(\SL_n(\Z_p))&=1, \label{eq:Tamagawa}\\
 \frac{1}{\sigma(r_2)}\prod_pm_p&=2^{r_2}, \label{eq:mpprod}
\end{align}
where \eqref{eq:Jacproduct} follows from the product formula,
\eqref{eq:Tamagawa} comes from the Tamagawa number of
$\SL_n(\Q)$ being $1$, and \eqref{eq:mpprod} follows from
\eqref{eq:minfty} and Lemma \ref{lemBK}. Combining these
with \eqref{eq:prodvol1} and \eqref{eq:prodvol2} yields the theorem.
\end{proof}

\subsection{Proof of Theorem \ref{thmacceptablefam}}\label{proofofthm3}

Let $\fR\subset\fR_H$ be an acceptable family of rings having fixed
signature $(r_1,r_2)$. Then the rings in $\fR$ are in bijection with
an acceptable set $\U(\Sigma)\subset U(\Z)$ of binary $n$-ic forms
with $\Sigma_\infty=U(\R)^{(r_2)}$. Let $\Lambda^{(\delta)}$ be a
collection of local specifications for $V$, where $\Lambda_p$ consists
of projective elements in $V(\Z_p)$ whose invariants belong to
$\Sigma_p$ and $\Lambda_\infty=V(\R)^{(r_2),\delta}$. Then
$\Lambda=(\Lambda_\nu)_\nu$ is acceptable. Furthermore, if $\fR$ is
very large, then so is $\Lambda$.

From Propositions \ref{prophr} and \ref{reducible} and Lemma \ref{lem:sizeHplus}, we know that
\begin{align*}
\sum_{\substack{\cO\in\fR\\H(\cO)<X}} 2^{r_1+r_2-1}|\Cl_2(\cO)|-|\cI_2(\cO)|&=
\sum_\delta N_H(\V(\Lambda^{(\delta)}),X), \textrm{ and}\\
\sum_{\substack{\cO\in\fR\\H(\cO)<X}} 2^{r_2}|\Cl_2^+(\cO)|-|\cI_2(\cO)|&=
 N_H(\V(\Lambda^{(\delta_{\gg0})}),X),
\end{align*}
where the first sum is over all possible $\delta$ and $\delta_{\gg0}$
denotes the element $(1,1,\ldots,1)\in\R^{r_1}\times \C^{r_2}$. As a result,
we have
\begin{equation}\label{eqproofH}
\begin{array}{rcccl}
\displaystyle\lim_{X\rightarrow \infty}
\frac{\displaystyle\sum_{\substack{\cO\in\fR\\H(\cO)<X}} 2^{r_1+r_2-1}|\Cl_2(\cO)|-|\cI_2(\cO)|}
{\displaystyle\sum_{\substack{\cO\in\fR\\H(\cO)<X}}1}
&=&\displaystyle\lim_{X\rightarrow \infty}
\frac{\displaystyle\sum_\delta N_H(\V(\Lambda^{(\delta)}),X)}
{\#\U(\Sigma)_{H<X}}&\leq&2^{r_1+r_2-1}, \\[.5in]
\textrm{and }\quad  \displaystyle\lim_{X\rightarrow \infty}
\frac{\displaystyle\sum_{\substack{\cO\in\fR\\H(\cO)<X}} 2^{r_2}|\Cl_2^+(\cO)|-|\cI_2(\cO)|}
{\displaystyle\sum_{\substack{\cO\in\fR\\H(\cO)<X}}1}
&=&\displaystyle\lim_{X\rightarrow \infty}
\frac{\displaystyle N_H(\V(\Lambda^{(\delta_{\gg0})}),X)}
{\#\U(\Sigma)_{H<X}}&\leq&2^{r_2},
\end{array}
\end{equation}
where we use Theorems \ref{thsieveupperbound} and \ref{thbcfHcount} to
evaluate the numerators and the denominators of the middle terms in
the above equation, and Theorem \ref{thmprodvol} to evaluate the
product of local volumes that arise.

Similarly, let $\fR\subset\fR_J$ be an acceptable family of rings
having fixed signature $(r_1,r_2)$. Then the rings in $\fR$ are in
bijection with $\SL_2(\Z)$-orbits on an acceptable set
$\U(\Sigma)\subset U(\Z)$ of binary $n$-ic forms with
$\Sigma_\infty=U(\R)^{(r_2)}$. We define $\Lambda^{(\delta)}$ as
above, and obtain

\begin{equation}\label{eqproofJ}
\begin{array}{rcccl}
\displaystyle\lim_{X\rightarrow \infty}
\frac{\displaystyle\sum_{\substack{\cO\in\fR\\J(\cO)<X}} 2^{r_1+r_2-1}|\Cl_2(\cO)|-|\cI_2(\cO)|}
{\displaystyle\sum_{\substack{\cO\in\fR\\J(\cO)<X}}1}
&=&\displaystyle\lim_{X\rightarrow \infty}
\frac{\displaystyle\sum_\delta N_J(\V(\Lambda^{(\delta)}),X)}
{\#\SL_2(\Z)\backslash\U(\Sigma)_{J<X}}&\leq&2^{r_1+r_2-1}, \\[.5in]
\textrm{and} \quad \displaystyle\lim_{X\rightarrow \infty}
\frac{\displaystyle\sum_{\substack{\cO\in\fR\\J(\cO)<X}} 
2^{r_2}|\Cl_2^+(\cO)|-|\cI_2(\cO)|}
{\displaystyle\sum_{\substack{\cO\in\fR\\J(\cO)<X}}1}
&=&\displaystyle\lim_{X\rightarrow \infty}
\frac{\displaystyle N_J(\V(\Lambda^{(\delta_{\gg0})}),X)}
{\#\SL_2(\Z)\backslash\U(\Sigma)_{J<X}}&\leq&2^{r_2},
\end{array}
\end{equation}
where we use Theorems \ref{thsieveupperbound} and \ref{thbcfJcount} to
evaluate the numerators and the denominators of the middle terms in
the above equation, and Theorem \ref{thmprodvol} to evaluate the
product of local volumes that arise.

If the families $\fR$ are very large, then from Theorem
\ref{thsieveproj}, the inequalities in \eqref{eqproofH} and
\eqref{eqproofJ} can be replaced with equalities. Likewise, if we assume that
one of the estimates in \eqref{eqassumedte} holds, then from Theorem
\ref{thabcv}, the inequalities in \eqref{eqproofH} and
\eqref{eqproofJ} can be replaced with equalities. This concludes the
proof of Theorem \ref{thmacceptablefam}. \hfill $\square$

\subsection{Proof of Theorem \ref{cor:oddclassno}}

Since Theorem \ref{thmacceptablefam} implies Theorems
\ref{thm:avgforfields}, \ref{thm:avgforcubicfields}, and
\ref{thm:avgfororders}, it remains to prove Theorem
\ref{cor:oddclassno}. We first prove a corollary of Theorem
\ref{thm:avgforfields} and Theorem \ref{thm:avgforcubicfields} on the
proportion of maximal orders in $\fR^{r_1,r_2}_{J,\max}$ which have
odd (narrow) class number.

\begin{cor}\label{cor:posprop}
Fix an odd integer $n \geq 3$ and signature $(r_1,r_2)$. If $\fR
\subset \fR^{r_1,r_2}_{J,\max}$ corresponds to an acceptable set of binary $n$-ic forms, then:
\begin{itemize}
\item[\rm (a)] A positive proportion $($at least $1 - 2^{1 - r_1 -
  r_2})$ of maximal orders in $\fR$ have odd class number.
\item[\rm (b)] If $r_2$ is also assumed to be nonzero, then a positive
  proportion $($at least $1 - 2^{- r_2})$ of $\fR$ have odd narrow
  class number. Thus, at least a proportion of $1 - 2^{-r_2}$ of $\fR$
  have narrow class number equal to the class number.
\end{itemize}
\end{cor}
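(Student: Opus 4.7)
The plan is to derive the corollary by combining Theorem \ref{thmacceptablefam} (applied to maximal orders, where $|\cI_2(\cO)|=1$) with the elementary fact that $|\Cl_2(\cO)|$ and $|\Cl_2^+(\cO)|$ are powers of $2$, hence each takes values in $\{1\}\cup\{2,4,8,\ldots\}$. Since $\fR \subset \fR^{r_1,r_2}_{J,\max}$ is acceptable and every $\cO\in\fR$ is maximal, Theorem \ref{thmacceptablefam} immediately yields $\Avg_J(\fR, \Cl_2) \leq 1 + 2^{1-r_1-r_2}$ and $\Avg_J(\fR, \Cl_2^+) \leq 1 + 2^{-r_2}$, where by the convention in the introduction these are $\limsup$ bounds on the corresponding ratios.

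For part (a), I would set $N(X) := \#\{\cO \in \fR : J(\cO) < X\}$ and let $N_{\mathrm{odd}}(X)$ count the subset with odd class number, equivalently those with $|\Cl_2(\cO)|=1$. Then
$$\sum_{\substack{\cO\in\fR\\ J(\cO)<X}} |\Cl_2(\cO)| \;\geq\; N_{\mathrm{odd}}(X) + 2\bigl(N(X) - N_{\mathrm{odd}}(X)\bigr) \;=\; 2N(X) - N_{\mathrm{odd}}(X),$$
so $N_{\mathrm{odd}}(X)/N(X) \geq 2 - \bigl(\sum_{\cO} |\Cl_2(\cO)|\bigr)/N(X)$. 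Taking $\liminf$ and applying the upper bound from Theorem \ref{thmacceptablefam} gives
$$\liminf_{X\to\infty}\ \frac{N_{\mathrm{odd}}(X)}{N(X)} \;\geq\; 2 - \bigl(1+2^{1-r_1-r_2}\bigr) \;=\; 1 - 2^{1-r_1-r_2}.$$
Because $n$ is odd with $n\geq 3$, one has $r_1+r_2 = n-r_2 \geq (n+1)/2 \geq 2$, so this lower bound is strictly positive.

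For part (b), I would run the identical argument with $\Cl_2^+$ in place of $\Cl_2$ and the bound $1+2^{-r_2}$, producing that the proportion of orders in $\fR$ having odd narrow class number is at least $1-2^{-r_2}$, which is positive precisely when $r_2\geq 1$. The final assertion follows from the short exact sequence
$$1 \;\to\; \{\pm 1\}^{r_1}/\sgn(\cO^\times) \;\to\; \Cl^+(\cO) \;\to\; \Cl(\cO) \;\to\; 1$$
recorded in the proof of Lemma \ref{lem:sizeHplus}: the kernel is a finite $2$-group, so when $|\Cl^+(\cO)|$ is odd this kernel must be trivial, giving $\Cl^+(\cO) = \Cl(\cO)$.

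Since the substantive work is entirely contained in Theorem \ref{thmacceptablefam}, there is no genuine obstacle here; the only point requiring mild care is the passage from an upper bound on the $\limsup$ of averages to a lower bound on the $\liminf$ of densities, handled via the elementary inequality above, together with verifying that the resulting proportions are strictly positive in the admissible range of signatures.
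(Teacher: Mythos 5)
Your proposal is correct and follows essentially the same route as the paper: both arguments are a Markov-type inequality converting the upper bound on the average of $|\Cl_2|$ (resp.\ $|\Cl_2^+|$) into a lower bound on the density of orders with trivial $2$-torsion, using that $|\Cl_2(\cO)|\geq 2$ whenever the class number is even. The only cosmetic differences are that you phrase it as a direct $\liminf$ inequality rather than by contradiction, and you invoke Theorem \ref{thmacceptablefam} with $|\cI_2(\cO)|=1$ directly instead of routing through Theorems \ref{thm:avgforfields} and \ref{thm:avgforcubicfields} (which lets you avoid the paper's separate treatment of $n=3$); both are fine.
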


\begin{proof} 
Fix a signature $(r_1,r_2)$, and suppose for the sake of a
contradiction that a lower proportion than $1 - 2^{1-r_1-r_2}$ of
rings of integers of number fields with signature $(r_1,r_2)$ that
correspond to integral binary $n$-ic forms have odd class number. This
implies that a larger proportion than $2^{1-r_1-r_2}$ of such maximal
orders would have nontrivial $2$-torsion subgroup in their class group
and thus have $|\Cl_2| \geq 2$. Then the limsup of the mean number of
$2$-torsion elements in class groups of such maximal orders would be
strictly larger than $1 + \frac{1}{2^{n-1-r_2}}$, contradicting
Theorem \ref{thm:avgforfields}(a), Theorem
\ref{thm:avgforcubicfields}(a), Theorem
\ref{thm:avgforcubicfields}(b), or Corollary 3 in \cite{BV3}.

Now suppose for the sake of a contradiction that a lower proportion
than $1 - 2^{-r_2}$ of maximal orders in number fields of signature
$(r_1,r_2)$ in $\fR$ have odd narrow class number. We would then be
able to conclude that a larger proportion than $2^{-r_2}$ of such
maximal orders would have at least two distinct $2$-torsion elements
in its narrow class group. Then the limsup of the mean number of
$2$-torsion elements in the narrow class groups of such maximal orders
would be strictly larger than $1 + 2^{-r_2}$, contradicting Theorem
\ref{thm:avgforfields}(b). When $n = 3$, note that the narrow class
group of a complex cubic field is always equal to its class group.
\end{proof}

\begin{thm} \label{thm:indiv} Fix a signature $(r_1,r_2)$. If $\fR \subset \fR^{r_1,r_2}_{J,\max}$ is an acceptable family of rings, then we have
\begin{itemize}
\item[{\rm (a)}] $\#\bigl\{ R \in \fR : |\Disc(R)| < X \mbox{ and } 2
  \nmid |\Cl(R)|\bigr\}\gg X^{\frac{n+1}{2n-2}}$.
\item[{\rm (b)}]  If $r_2 \geq 1$, then $\#\bigl\{ R \in \fR : |\Disc(R)| < X \mbox{ and } 2 \nmid |\Cl^+(R)|\bigr\}\gg X^{\frac{n+1}{2n-2}}$.
\end{itemize}
\end{thm}
\begin{proof}
In \cite{BSW-part2}, it is proved that there exists a nonempty open
bounded set $B \subset U(\R)$, whose closure does not contain any
element having discriminant $0$, such that for any $X>0$, every
element $f\in X\cdot B\cap U(\Z)$ is {\it strongly reduced}, i.e., the
basis given in \eqref{eq:basisRf} is the unique Minkowski-reduced
basis of the ring $R_f$ corresponding to $f$. It is further shown that
if two distinct elements $f_1$ and $f_2$ of $U(\Z)$ are strongly
reduced, then the rings $R_{f_1}$ and $R_{f_2}$ corresponding to $f_1$
and $f_2$ are not isomorphic.

Let $\Sigma$ denote the collection of local specifications defining $\fR$, and let $\fR_B$ denote the family of maximal
$S_n$-orders $R$, where $R=R_f$ arises from an integral binary $n$-ic
form $f\in \U(\Sigma)\cap \R_{>0}\cdot B$. We endow this family of
binary $n$-ic forms with the natural height
\begin{equation*}
H_B(f):=\min\{X:f\in X\cdot B\},
\end{equation*}
thereby defining a height function on the family $\fR_B$ of maximal $S_n$-orders.  The
average sizes of $\Cl_2$ and $\Cl_2^+$ over the rings in $\fR_B$, ordered
by $H_B$, are bounded by $1+2^{1-r_1-r_2}$ and $1+2^{-r_2}$, respectively; the proof
for the analogous statement when rings are ordered by height $H$
adapts to this situation without change. Therefore, by the same
argument as in the proof of Corollary \ref{cor:posprop}, we see that a
positive proportion of rings in $\fR_B$ have odd class number.

Let $c>0$ be a constant such that every element in $cB$ has
discriminant bounded by $1$ in absolute value. Then every element in
$cX^{1/(2n-2)}B$ has discriminant bounded by $X$. Since we have
\begin{equation*}
\#\{\U(\Sigma)\cap cX^{1/(2n-2)}B\}\gg X^{\frac{n+1}{2n-2}},
\end{equation*}
the theorem follows.
\end{proof}

\noindent Note that the conditions required in Theorem \ref{cor:oddclassno} are indeed acceptable, so Theorem \ref{cor:oddclassno} follows directly from Theorem \ref{thm:indiv}.

\bibliography{hsvbib} \bibliographystyle{amsplain}

\end{document}